\newtheorem{theorem}{Theorem}[section]
\newtheorem{lemma}[theorem]{Lemma}
\newtheorem{proposition}[theorem]{Proposition}
\newtheorem{definition}[theorem]{Definition}
\newtheorem{corollary}[theorem]{Corollary}
\newtheorem{maintheorem}{Theorem}
\theoremstyle{remark}
\newtheorem{remark}[theorem]{Remark}
\theoremstyle{remark}
\newcommand{\C}{\ensuremath{\mathbb{C}}}
\newcommand{\R}{\ensuremath{\mathbb{R}}}
\newcommand{\g}[1]{\ensuremath{\mathfrak{#1}}}
\DeclareMathOperator{\ad}{ad}
\DeclareMathOperator{\Ad}{Ad}
\DeclareMathOperator{\Aut}{Aut}
\DeclareMathOperator{\codim}{codim}
\DeclareMathOperator{\ind}{ind}
\DeclareMathOperator{\Int}{Int}
\DeclareMathOperator{\Isom}{Isom}
\DeclareMathOperator{\rank}{rank}
\DeclareMathOperator{\tr}{tr}
\DeclareMathOperator{\RealPart}{Re}
\renewcommand{\Re}{\RealPart}
\newcommand\stru{\rule[-.65em]{0em}{1.7em}}
\newcommand\strt{\rule[-.15em]{0em}{1.2em}}
\begin{document}
	\title[Totally geodesic submanifolds in exceptional symmetric spaces]{Totally geodesic submanifolds in\\ exceptional symmetric spaces}
		\author[A. Kollross]{Andreas Kollross}
	\address{Institut f\"{u}r Geometrie und Topologie, Universit\"{a}t Stuttgart, Germany}
	\email{kollross@mathematik.uni-stuttgart.de}
	
	\author[A. Rodr\'{\i}guez-V\'{a}zquez]{Alberto Rodr\'{\i}guez-V\'{a}zquez}
	\address{KU Leuven, Department of Mathematics, Celestijnenlaan 200B, Leuven, Belgium}
	\email{alberto.rodriguezvazquez@kuleuven.be}
	\thanks{The second author has been supported by the projects:  PID2019-105138GB-C21/AEI/10.13039/501100011033 (Spain) and ED431C 2019/10, ED431F 2020/04 (Xunta de Galicia, Spain), the FPU program (Ministry of Education, Spain) and a fellowship within the Short-Term Grant Programme of the German Academic Exchange Service (DAAD),  and the Methusalem grant METH/15/026 of the Flemish Government (Belgium).}
	
	\subjclass[2010]{Primary 53C35, Secondary 57S20, 53C40}
	\keywords{Totally geodesic, exceptional, symmetric space, Dynkin index.}
\maketitle

\begin{abstract}
We classify maximal totally geodesic submanifolds in exceptional symmetric spaces up to isometry. Moreover, we introduce an invariant for certain totally geodesic embeddings of semisimple symmetric spaces, which we call the Dynkin index. We prove a result analogous to the index conjecture: for every irreducible symmetric space of non-compact type, there exists a totally geodesic submanifold which is of minimal codimension and whose non-flat irreducible factors have Dynkin index equal to one.
\end{abstract}

\section{Introduction}\label{sect:intro}
A totally geodesic submanifold $\Sigma$ of a Riemannian manifold $M$ is a connected submanifold of $M$ such that each geodesic of $\Sigma$ is also a geodesic of $M$.
The problem of classifying totally geodesic submanifolds in Riemannian symmetric spaces has been a relevant and outstanding topic of research in submanifold geometry in the last decades.
It was started in 1963 when, in his seminal paper, Wolf~\cite{wolfrankone} classified totally geodesic submanifolds in symmetric spaces of rank one. For rank two this problem has been addressed by Chen and Nagano~\cite{chen1,chen2} and Klein~\cite{kleindga,kleintams,kleinosaka}. Up to now, there are only complete classifications in symmetric spaces of rank less than three.

However, classification results for some special kinds of totally geodesic submanifolds are known. For example, totally geodesic submanifolds of maximal rank were studied by Ikawa and Tasaki~\cite{Ikawa}. They proved that a totally geodesic submanifold in a simple compact Lie group equipped with a bi-invariant metric is maximal if and only if it is a maximal subgroup or a Cartan embedding. Another special class of totally geodesic submanifolds is that of reflective submanifolds.  They are totally geodesic submanifolds which arise as connected components of the fixed point set of an involutive isometry. Reflective submanifolds have been classified by Leung in~\cite{Leung,LeungIII}. Moreover, Lagrangian totally geodesic submanifolds were classified in Hermitian symmetric spaces by Jaffee~\cite{jaffe1,jaffe2} and independently by~Leung~\cite{LeungIV}. Mashimo~\cite{Mashimo} classified totally geodesic surfaces in classical symmetric spaces.

A totally geodesic submanifold is said to be maximal if it is maximal with respect to the inclusion among totally geodesic proper  submanifolds. Note that every totally geodesic submanifold in a symmetric space can be extended to a complete totally geodesic submanifold.

In the present article, we classify maximal totally geodesic submanifolds in exceptional symmetric spaces. Since duality preserves totally geodesic submanifolds, it suffices to obtain the classification in the non-compact setting.
According to the classification by Cartan~\cite{helgason}, the irreducible symmetric spaces of non-compact type consist of several infinite families and the 17 exceptional spaces given in Table~\ref{table:ExcSp}.

\begin{table}[H]\caption{Exceptional symmetric spaces of non-compact type.}\label{table:ExcSp}
\begin{tabular}{l|lllll}

\stru $\mathsf{G}_2$-type & $\mathsf{G}^2_2/\mathsf{SO}_4$                       & $\mathsf{G}_2(\C)/\mathsf{G}_2$                  &                                                  &                                  &            \\ \cline{1-6}
\stru $\mathsf{F}_4$-type & $\mathsf{F}^4_4/\mathsf{Sp}_3\mathsf{Sp}_1$       & $\mathsf{F}^{-20}_4/\mathsf{Spin}_9$           & {$\mathsf{F}_4(\C)/\mathsf{F}_4$}                     &                                   &            \\ \hline
\stru $\mathsf{E}_6$-type & $\mathsf{E}^6_6/\mathsf{Sp}_4$                  & $\mathsf{E}^2_6/\mathsf{SU}_6\mathsf{Sp}_1$     & {$\mathsf{E}^{-14}_6/\mathsf{Spin}_{10}\mathsf{U}_1$} & {$\mathsf{E}^{-26}_6/\mathsf{F}_{4}$} & $\mathsf{E}_6(\C)/\mathsf{E}_{6}$ \\ \hline
\stru $\mathsf{E}_7$-type & $\mathsf{E}^{-5}_7/\mathsf{SO}_{12}\mathsf{Sp}_1$ & $\mathsf{E}^{-25}_7/\mathsf{E}_{6}\mathsf{U}_1$ & {$\mathsf{E}^{7}_7/\mathsf{SU}_{8}$}                  & {$\mathsf{E}_7(\C)/\mathsf{E}_{7}$}   &           \\ \cline{1-6}
\stru $\mathsf{E}_8$-type & $\mathsf{E}^{-24}_8/\mathsf{E}_{7}\mathsf{Sp}_1$  & $\mathsf{E}^{8}_8/\mathsf{SO}_{16}$             & {$\mathsf{E}_8(\C)/\mathsf{E}_{8}$}                   &                                  &           \\
\end{tabular}
\end{table}

\begin{maintheorem}\label{mainth:a}
Let $M=G/K$  be an irreducible exceptional symmetric space of non-compact type. Let $\Sigma$ be a maximal totally geodesic submanifold of~$M$. Then $\Sigma$ is isometric to one of the spaces listed in Tables~\ref{table:g2}, \ref{table:f4}, \ref{table:e6}, \ref{table:e7}, \ref{table:e8} at the end of this article. Conversely, every space listed in these tables can be isometrically embedded as a maximal totally geodesic submanifold of~$M$.
\end{maintheorem}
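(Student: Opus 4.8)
The plan is to translate the geometric problem into an algebraic one via the standard correspondence between complete totally geodesic submanifolds through a fixed base point $o=eK$ and Lie triple systems in $\g{p}=T_oM$, where $\g{g}=\g{k}\oplus\g{p}$ is the Cartan decomposition associated with the Cartan involution $\theta$. Under this correspondence a maximal totally geodesic submanifold corresponds to a Lie triple system $\g{m}\subseteq\g{p}$ that is maximal among proper Lie triple systems, and two such embeddings are congruent precisely when the Lie triple systems are conjugate under the isotropy group $K$ (together with the outer symmetries of $M$). Classifying maximal totally geodesic submanifolds up to isometry thus reduces to classifying maximal Lie triple systems up to this action, after which the intrinsic isometry type is read off. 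Since duality preserves Lie triple systems, it suffices to carry this out for one real form.

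First I would record the structural reduction that makes the search finite. Each Lie triple system $\g{m}$ generates a subalgebra $\g{g}_{\g{m}}=[\g{m},\g{m}]\oplus\g{m}$ which is automatically $\theta$-invariant, since $\theta|_{\g{p}}=-\Id$ and $[\g{m},\g{m}]\subseteq\g{k}$; conversely every $\theta$-invariant subalgebra $\g{h}=(\g{h}\cap\g{k})\oplus(\g{h}\cap\g{p})$ meets $\g{p}$ in a Lie triple system. Now suppose $\g{m}$ is maximal. If $\g{g}_{\g{m}}$ is not maximal among $\theta$-invariant subalgebras, choose a maximal $\theta$-invariant $\g{h}$ with $\g{g}_{\g{m}}\subseteq\g{h}\subsetneq\g{g}$. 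Then $\g{h}\cap\g{p}$ is a Lie triple system containing $\g{m}$, and it cannot equal $\g{p}$: otherwise $\g{g}=[\g{p},\g{p}]\oplus\g{p}\subseteq\g{h}$, as $[\g{p},\g{p}]=\g{k}$ for our simple $\g{g}$, contradicting $\g{h}\neq\g{g}$. Maximality of $\g{m}$ therefore forces $\g{h}\cap\g{p}=\g{m}$. Hence \emph{every} maximal Lie triple system is of the form $\g{h}\cap\g{p}$ for some maximal $\theta$-invariant subalgebra $\g{h}$, which replaces the search over all Lie triple systems by the classification of maximal $\theta$-invariant subalgebras, a finite and known problem.

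The next step is to assemble, for each $\g{g}$ in Table~\ref{table:ExcSp}, the list of maximal $\theta$-invariant subalgebras $\g{h}$. These arise from Dynkin's classification of maximal subalgebras of the complex exceptional simple Lie algebras, transported to the relevant real form via its Satake data, supplemented by the subalgebras of fixed-point type $\g{h}=\g{g}^{\sigma}$ for an involution $\sigma$ commuting with $\theta$, which yield exactly the reflective submanifolds classified by Leung~\cite{Leung,LeungIII}. For each candidate $\g{h}$ one reads off the symmetric space corresponding to the Lie triple system $\g{h}\cap\g{p}$, together with its rank and dimension, producing the entries of Tables~\ref{table:g2}--\ref{table:e8}.

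The main obstacle, and the bulk of the work, is deciding maximality of the Lie triple systems $\g{h}\cap\g{p}$ themselves: a maximal $\theta$-invariant subalgebra need not meet $\g{p}$ in a maximal Lie triple system, so one must rule out inclusions $\g{h}_1\cap\g{p}\subsetneq\g{h}_2\cap\g{p}$ among the candidates, and also detect any maximal Lie triple system that generates a non-maximal subalgebra. Here I would proceed case by case, comparing dimensions and $\codim$, using the isotropy representation of $K$ on $\g{p}$ to organize the $K$-conjugacy classes, and deploying the Dynkin index introduced in this paper as a finer invariant to separate embeddings that coincide in rank and dimension. The same invariants, applied to the $K$-action, settle the uniqueness up to isometry and the non-redundancy of the list. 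The converse direction is then immediate: each tabulated space is realized by the explicit $\theta$-invariant subalgebra from which it was obtained, giving the required isometric totally geodesic embedding as a maximal submanifold.
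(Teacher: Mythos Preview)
Your reduction to maximal $\theta$-invariant subalgebras is a legitimate starting point, but the phrase ``a finite and known problem'' conceals precisely the work the paper has to do, and your proposal supplies no mechanism that would actually produce the list. Dynkin classifies maximal subalgebras of the \emph{complex} simple Lie algebras; a maximal $\theta$-invariant subalgebra of a real form need not complexify to a maximal subalgebra of~$\g{g}_\C$, and your remedy---transport via Satake data supplemented by fixed-point subalgebras $\g{g}^\sigma$---would recover only reflective submanifolds, whereas the bulk of Tables~\ref{table:g2}--\ref{table:e8} are non-reflective. The paper organises the search differently: it first disposes of the non-semisimple maximal totally geodesic submanifolds via Berndt--Olmos~\cite{BO1}, then proves a Correspondence Theorem (Theorem~\ref{th:correspondence}) giving a \emph{bijection} between maximal semisimple totally geodesic submanifolds and subalgebras maximal among those of \emph{non-compact type}, rather than your many-to-one map $\g{h}\mapsto\g{h}\cap\g{p}$ over all $\theta$-invariant $\g{h}$. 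This bijection needs Karpelevich--Mostow together with a congruence result for totally geodesic orbits (Proposition~\ref{prop:congruency}). The candidate set $\widetilde{\mathcal L}(\g{g})$ is then assembled from de~Graaf--Marrani~\cite{Degraaf} (real forms of $\g{h}_\C$ maximal reductive in~$\g{g}_\C$) and Komrakov~\cite{Komrakov} (real $\g{h}$ maximal with $\g{h}_\C$ \emph{not} maximal), after deleting compact ideals; neither ingredient appears in your outline.

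The second gap is the maximality check itself. You propose to ``rule out inclusions $\g{h}_1\cap\g{p}\subsetneq\g{h}_2\cap\g{p}$'' by comparing dimension, rank, isotropy data and Dynkin index, but these invariants do not decide the hard cases, which are exactly those where a candidate in $\widetilde{\mathcal L}(\g{g})$ was obtained by stripping a compact ideal $\g{k}_1$ from a larger reductive subalgebra. The paper's decisive tool is Lemma~\ref{lemma:nonmaxtg}: the deleted compact factor~$K_1$ acts effectively on the normal space of the totally geodesic orbit while fixing it pointwise, and whenever $\rank K_1\ge 2$ or the action is effectively an $\mathsf{SO}_3$-representation, the fixed set of a suitable subgroup produces a strictly larger totally geodesic submanifold. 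This criterion, together with the ad hoc Lemmas~\ref{lemma:exceptions}, \ref{lemma:nonsemi} and~\ref{lemma:nonsemi-complex} for the residual cases, is what actually prunes $\widetilde{\mathcal L}(\g{g})$ down to the tables; without an analogue of it your case analysis cannot terminate.
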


Since non-semisimple maximal totally geodesic submanifolds of irreducible symmetric spaces have been classified by Berndt and Olmos~\cite{BO1}, it suffices to study semisimple maximal totally geodesic submanifolds.
The proof of Theorem~\ref{mainth:a} uses a correspondence between maximal semisimple totally geodesic submanifolds and certain subalgebras of the Lie algebra of the isometry group. Note that a maximal semisimple totally geodesic submanifold needs not be maximal, since it might be contained in a non-semisimple maximal totally geodesic submanifold. While there is no classification of the relevant subalgebras, we construct a set which contains all of them. Then it remains to decide which of these subalgebras give rise to maximal totally geodesic submanifolds. In order to do so, we develop some criteria for maximality.

We would like to remark that our methods can be used to classify maximal totally geodesic submanifolds in symmetric spaces whose isometry group has rank less or equal than eight. Therefore, one could list all maximal totally geodesic submanifolds up to isometry in symmetric spaces with isometry group of rank less than or equal to eight. However, in this work we content ourselves with listing the classification in the exceptional symmetric spaces.

Onishchik~\cite{Onishchikindex} introduced an invariant of symmetric spaces concerning totally geodesic submanifolds called index, which is defined as the minimal codimension of a totally geodesic proper submanifold.
Berndt, Olmos and Rodr\'iguez~\cite{BO1,BO2,BO3,BO4,BO5} have computed the index~$i(M)$ of every irreducible symmetric space~$M$. In particular, they proved what they called the \emph{Index Conjecture}~\cite{BO1}. It states that in an irreducible symmetric space of non-compact type $M\neq \mathsf{G}^2_2/\mathsf{SO}_4$, there is some reflective submanifold~$\Sigma$ of~$M$ whose codimension equals the index of~$M$.

Generalizing a notion introduced by Dynkin~\cite{Dynkin}, we define the Dynkin index of certain semisimple subalgebras of simple real Lie algebras, see Definition~\ref{def:indD}. We use this to characterize the isometry types of totally geodesic embeddings of semisimple symmetric spaces into irreducible symmetric spaces.  This characterization allows us to derive a result analogous to the Index Conjecture:

\begin{maintheorem}
\label{th:index}
Let $M$ be an irreducible symmetric space of non-compact type. Then, there is some totally geodesic submanifold~$\Sigma$ in~$M$ with
$i(M)=\codim(\Sigma)$ such that the Dynkin index of the semisimple part of the Lie algebra of the isometry group of~$\Sigma$ equals~$(1,1,\dots,1)$.
\end{maintheorem}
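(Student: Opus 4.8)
The plan is to combine the solution of the Index Conjecture by Berndt, Olmos and Rodr\'iguez~\cite{BO1,BO2,BO3,BO4,BO5} with the classification obtained in Theorem~\ref{mainth:a}. By their work, for every irreducible symmetric space $M=G/K$ of non-compact type the index $i(M)$ is known explicitly and is attained by a concrete totally geodesic submanifold, which may be chosen reflective whenever $M\neq\mathsf{G}^2_2/\mathsf{SO}_4$. Since Theorem~\ref{th:index} is an existence statement, I am free to select, among all totally geodesic submanifolds of minimal codimension $i(M)$, one whose isometry algebra embeds most favorably. Thus the theorem reduces to the following verification: for each $M$ there is a totally geodesic $\Sigma$ with $\codim\Sigma=i(M)$ such that every non-flat simple factor $\g{h}_i$ of the semisimple part of the Lie algebra $\g{h}$ of $\Isom(\Sigma)$ is embedded in $\g{g}=\mathrm{Lie}(G)$ with Dynkin index~$1$.

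To carry this out I would first pass to the complexification: by Definition~\ref{def:indD}, the Dynkin index of a simple factor $\g{h}_i\hookrightarrow\g{g}$ is the classical integer of Dynkin~\cite{Dynkin} attached to $\g{h}_i^{\C}\hookrightarrow\g{g}^{\C}$, that is, the scalar $j_i$ with $B_{\g{g}^{\C}}|_{\g{h}_i^{\C}}=j_i\,B_{\g{h}_i^{\C}}$ for the invariant forms normalized so that long roots have squared length~$2$. The elementary fact I would use repeatedly is that $j_i=1$ whenever $\g{h}_i^{\C}$ is a regular subalgebra of $\g{g}^{\C}$ (i.e.\ normalized by a Cartan subalgebra) whose root system contains a long root of $\g{g}^{\C}$; in particular, the maximal-rank regular subalgebras obtained by deleting a node from the extended Dynkin diagram of $\g{g}^{\C}$, together with the standard block inclusions $\g{sl}_k\hookrightarrow\g{sl}_n$, $\g{so}_k\hookrightarrow\g{so}_n$ and $\g{sp}_k\hookrightarrow\g{sp}_n$, all have index~$1$ on the factors containing a long root.

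For the classical families the minimal-codimension submanifolds of~\cite{BO1,BO2,BO3,BO4,BO5} can be taken, after the choice permitted above, to be precisely such rank-reduction or block embeddings, so the verification is immediate. For the exceptional spaces I would read off a totally geodesic submanifold of codimension $i(M)$ from the tables of Theorem~\ref{mainth:a}, identify the embedding $\g{h}\hookrightarrow\g{g}$ of its isometry algebra, and check each factor: embeddings such as $\g{e}_6\hookrightarrow\g{e}_7$, $\g{e}_7\hookrightarrow\g{e}_8$ and $\g{e}_6\oplus\g{sl}_3\hookrightarrow\g{e}_8$ are regular and simply laced, hence of index~$1$ by the criterion above, while the remaining ones, for instance $\g{so}_9\hookrightarrow\g{f}_4$ or $\g{f}_4\hookrightarrow\g{e}_6$, are read off from Dynkin's tables.

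The main obstacle is twofold. First, one must select the correct index-realizing submanifold, because the property genuinely depends on the choice: the reflective embedding $\g{so}_n\hookrightarrow\g{sl}_n$ underlying a real-form symmetric subspace has Dynkin index~$2$ rather than~$1$, so among several minimal-codimension submanifolds only some are admissible, and exhibiting an admissible one is the actual content of the theorem. Second, the space $M=\mathsf{G}^2_2/\mathsf{SO}_4$, excluded from the original Index Conjecture, admits no reflective submanifold of minimal codimension; here I would instead use the explicit non-reflective submanifold furnished by Theorem~\ref{mainth:a} and compute its Dynkin index directly. I expect these exceptional index computations and this last special case to be the only genuinely delicate points, the rest being a systematic pass through the classification.
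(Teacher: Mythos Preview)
Your proposal is correct and follows essentially the same approach as the paper: take the explicit index-realizing submanifolds from Berndt--Olmos~\cite{BO5}, verify for the classical families that the block inclusions $\g{sl}_n\subset\g{sl}_{n+k}$, $\g{so}_n\subset\g{so}_{n+k}$, $\g{sp}_n\subset\g{sp}_{n+k}$ have Dynkin index~$1$ via the long-root criterion you describe (this is exactly the paper's Lemma~\ref{lemma:DynCls}), and for the exceptional spaces read the Dynkin indices directly from Tables~\ref{table:g2}--\ref{table:e8}. Your caution about choosing the correct minimal-codimension submanifold and your singling out of $\mathsf{G}^2_2/\mathsf{SO}_4$ are apt, though in practice the submanifolds listed in~\cite[Table~1]{BO5} already work without further selection.
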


This article is organized as follows. We revisit some well-known basic facts about symmetric spaces and their totally geodesic submanifolds in \S~\ref{subsect:pretot} and different formulations of the Karpelevich-Mostow theorem in \S~\ref{subsect:prekarpelevich}. Some useful facts about the complexification of Lie subalgebras of a real Lie algebra are recalled in \S~\ref{subsect:precomplex}.  In Section~\ref{sect:maximalsemi}, we prove a correspondence between maximal semisimple totally geodesic submanifolds in symmetric spaces of non-compact type and certain subalgebras of the isometry algebra of the ambient space. Then we specialize our study of totally geodesic submanifolds of symmetric spaces to the setting of exceptional symmetric spaces. In Section~\ref{sect:Dynkin}, we introduce an invariant for certain semisimple totally geodesic submanifolds in symmetric spaces of non-compact type that we call Dynkin index, which characterizes these submanifolds up to isometries. In Section~\ref{sect:totgeodreal}, we classify maximal totally geodesic submanifolds in exceptional symmetric spaces whose isometry group is absolutely simple, and in Section~\ref{sect:totgeodcomplex} we deal with the case when the isometry group is not absolutely simple. Section~\ref{sect:pfs} contains the proofs of the two main results (Theorem~\ref{mainth:a} and \ref{th:index} above).

\textbf{Acknowledgments}. The second author would like to thank his advisors Prof.~Jos\'e Carlos D\'iaz-Ramos and Prof.~Miguel Dom\'inguez-V\'azquez for their careful reading of the manuscript and their constant support. In particular, we want to thank Prof.~Miguel Dom\'inguez-V\'azquez for his helpful comments on~\S~2.2.

\section{Preliminaries}\label{sect:pre}

\subsection{Totally geodesic submanifolds in symmetric spaces}
\label{subsect:pretot}
In this section, we recall some basic facts and notations about Riemannian symmetric spaces and their totally geodesic submanifolds.

Let $M=G/K$ be a connected Riemannian symmetric space where $G=\mathrm{Isom}^0(M)$ is the connected component of the identity of the isometry group of~$M$ and the Lie group $K=\{g\in G: g\cdot o=o \}$ is the isotropy at some point $o\in M$. Let $\g{g}$ be the Lie algebra of~$G$.
Let $\mathcal{B}_{\g{g}}$ be the Killing form of~$\g{g}$, which is defined as $\mathcal{B}_{\g{g}}(X,Y)=\tr(\ad_X \ad_Y)$ for $X,Y\in \g{g}$, where $\ad$ stands for the adjoint representation of~$\g{g}$.

We consider the geodesic symmetry~$s_o$ at the base point $o\in M$; it gives rise to an involutive automorphism~$\sigma$ of~$G$ defined by $\sigma(g)= s_o g s_o$ whose differential at the identity~$\sigma_{*e}$ is denoted by~$\theta$. The map $\theta$ is a Lie algebra automorphism of~$\g{g}$, and $\g{g}$ decomposes as the direct sum of vector spaces $\g{g} = \g{k} \oplus \g{p}$, where $\g{k}$ is the fixed point set of~$\theta$ and $\g{p}$ is the eigenspace of~$\theta$ corresponding to the eigenvalue~$-1$.
In case $-\mathcal{B}_{\g{g}}(\theta X,Y)$ is positive definite, this splitting is called the \emph{Cartan decomposition} of~$\g{g}$ with respect to~$\theta$, and the involution~$\sigma$ is called a \emph{Cartan involution} of~$\g{g}$.

Note that for any real semisimple Lie algebra there exists a Cartan involution, and any two Cartan involutions in a real semisimple Lie algebra differ by an inner automorphism.

Furthermore, $\g{k}$ is the Lie algebra of~$K$ and we have the  bracket relations
$[\g{k},\g{k}]\subset \g{k},$ $[\g{k},\g{p}]\subset \g{p},$ $[\g{p},\g{p}]\subset \g{k}.$
We may identify $\g{p}$ with $T_o M$ by the linear isomorphism which maps $X\in \g{p}$ to $X^*_o=\left.\frac{d}{dt}\right|_{t=0} \left(\exp(t X)\cdot o\right)\in T_o M,$
where $\exp$ is the exponential map of the Lie group $G$.

The maximal abelian subspaces of~$\g{p}$ are conjugate by the isotropy group~$K$. In particular, they all have the same dimension, which is called the \emph{rank} of the symmetric space~$M$. It turns out that $\g{k}$ and~$\g{p}$ are orthogonal with respect to $\mathcal{B}_{\g{g}}$ and that every automorphism of~$\g{g}$ is a linear isometry for~$\mathcal{B}_{\g{g}}$.

We will say that a symmetric space is \emph{irreducible} if the universal cover $\widetilde{M}$ of~$M$, which is again a symmetric space, is not isometric to a non-trivial product of symmetric spaces.
A symmetric space is said to be of \emph{compact type}, \emph{non-compact type} or \emph{Euclidean type} if $\left.\mathcal{B}_{\g{g}}\right|_{\g{p}\times \g{p}}$, the restriction of the Killing form $\mathcal{B}_{\g{g}}$ to~$\g{p}$, is negative definite, positive definite or identically zero, respectively. When $M$ is of compact type,  $M$ is compact with non-negative sectional curvature and $G$ is a compact semisimple Lie group. If $M$ is of non-compact type, then $M$ is diffeomorphic to~$\mathbb{R}^n$, for some $n\ge2$, and $G$ is a non-compact semisimple Lie group.
The universal cover of a symmetric space splits as a product $$\widetilde{M}=M_0 \times M_{+} \times M_{-},$$ where $M_0$, which is called the \emph{flat factor}, is isometric to a Euclidean space and $M_{+}$ and $M_{-}$ are simply connected symmetric spaces of compact and non-compact type, respectively. It is said that $M$ is \emph{semisimple} if $M_0$ is a point.

An important notion, which establishes a relation between symmetric spaces of compact type and non-compact type, is duality. If we restrict our attention to simply connected symmetric spaces, there is a one-to-one correspondence between symmetric spaces of non-compact type and symmetric spaces of compact type. At the Lie algebra level this works as follows: Let $M=G/K$ be a symmetric space of non-compact type and let $\g{g}=\g{k}\oplus \g{p}$ be its Cartan decomposition. Consider $\g{g}_{\C}=\g{g}\otimes_{\R}\C$, the complexification of~$\g{g}$. We can define the subspace $\g{g}^*=\g{k}\oplus i\g{p}$ of~$\g{g}_{\C}$, where $i=\sqrt{-1}$. Then $\g{g}^*$ is a compact Lie algebra and $M^*=G^*/K^*$ is a symmetric space of compact type equipped with the Riemannian metric induced by the negative of the Killing form of~$\g{g}^*$, where $G^*$ is the simply connected Lie group with Lie algebra~$\g{g}^*$ and $K^*$ is the connected subgroup of~$G^*$ with Lie algebra~$\g{k}$.

Let $\Sigma$ be a connected totally geodesic submanifold of~$M=G/K$. By the homogeneity of~$M$, we can assume without loss of generality that $o\in \Sigma$. Cartan proved in~\cite{Cartan} that a totally geodesic submanifold~$\Sigma$ of~$M$ with $o\in \Sigma$ and $V=T_o \Sigma\subset T_o M$ exists if and only if $V\subset T_o M$ is \emph{curvature invariant}. This means that $R_o(V,V)V\subset V$, where $R$ is the Riemannian curvature tensor of~$M$. Using the identification of~$\g{p}$ and~$T_o M$, we can write the curvature tensor of~$M$ at~$o$ as
\[ R_o(X,Y)Z=[[X,Y],Z], \]
for $X,Y,Z \in T_o M$. Thus, a subspace $V\subset \g{p}$ is curvature invariant if and only if $[[X,Y],Z]\in V$ for every $X,Y,Z\in V$. A subspace~$V$ of~$\g{p}$ with this property is called a \emph{Lie triple system} in~$\g{p}$. Hence, there is a one-to-one correspondence between Lie triple systems $V$ in~$\g{p}$ and complete totally geodesic submanifolds $\Sigma$ in~$M$ containing~$o\in M$. In this article we will consider only complete totally geodesic submanifolds since every totally geodesic submanifold can be extended to a complete one. Furthermore, if $V$ is a Lie triple system in~$\g{p}$ such that its orthogonal complement in~$\g{p}$ is also a Lie triple system, we say that it is a \emph{reflective Lie triple system} and the corresponding totally geodesic submanifold is called \emph{reflective}. A submanifold is reflective if and only if it is a connected component of the fixed point set of an involutive isometry.

To obtain a homogeneous presentation of a totally geodesic submanifold from a Lie triple system we proceed as follows. Let $V\subset \g{p}$ be a Lie triple system in~$\g{p}$. Define $\g{g}':=[V,V]\oplus V\subset \g{k}\oplus \g{p}$, which is clearly a subalgebra of~$\g{g}$ since $V$ is a Lie triple system. It turns out that if we consider $G'$, the connected Lie subgroup of~$G$ with Lie algebra $\g{g}'$, then the $G'$-orbit through~$o\in M$ is a totally geodesic submanifold~$\Sigma\subset M$ with $T_o \Sigma = V$. This shows that if $\Sigma\subset M$ is a totally geodesic submanifold passing through~$o$, then $\g{p}_{\Sigma} := T_o\Sigma$ is a Lie triple system in~$\g{p}$ and we can define
\[ \g{k}_{\Sigma}:= [\g{p}_{\Sigma},\g{p}_{\Sigma} ], \qquad \g{g}_{\Sigma}:=\g{k}_{\Sigma} \oplus \g{p}_{\Sigma}.   \]
Then $\g{k}_{\Sigma}\subset \g{k}$ and $\g{g}_{\Sigma}\subset \g{g}$ are subalgebras and if we consider the connected Lie subgroups $G_{\Sigma}\subset G$ with Lie algebra~$\g{g}_{\Sigma}$ and $K_{\Sigma}\subset K$ with Lie algebra~$\g{k}_{\Sigma}$, then $\Sigma= G_{\Sigma}/K_{\Sigma}$ as homogeneous spaces.
Moreover, every totally geodesic submanifold~$\Sigma$ of~$M$ is invariant under~$s_p$ for every $p\in \Sigma$. Thus every totally geodesic submanifold~$\Sigma\subset M$ is a symmetric space with respect to its induced Riemannian metric. A totally geodesic submanifold is said to be \textit{semisimple} if it is a semisimple symmetric space. Finally, observe that if $V$ is a Lie triple system in~$\g{p}$, then $i V$ is a Lie triple system in~$i\g{p}$. This means that a totally geodesic submanifold~$\Sigma$ of~$G/K$ containing~$o$ corresponds to a totally geodesic submanifold~$\Sigma^*$ of~$G^*/K^*$ dual to~$\Sigma$. Hence, when studying totally geodesic submanifolds, it will not be restrictive to assume that our ambient symmetric space is either of compact type or of non-compact type.

An important invariant of a symmetric space is the index. The \emph{index} of a symmetric space~$M$, denoted by~$i(M)$, is the minimal codimension of a proper totally geodesic submanifold of~$M$. We will say that a totally geodesic submanifold~$\Sigma$ of~$M$ \emph{realizes the index} of~$M$ if $\Sigma$ has codimension equal to~$i(M)$. In a series of papers, Berndt, Olmos and Rodríguez~\cite{BO1,BO2,BO3,BO4,BO5} computed  the index of irreducible symmetric spaces.

\subsection{Karpelevich-Mostow Theorem}
\label{subsect:prekarpelevich}
A fundamental result in the study of totally geodesic submanifolds in symmetric spaces of non-compact type is known as the Karpelevich Theorem~\cite{karpelevich}, see also~\cite{mostow} and~\cite{discala-olmos}.
\begin{theorem}
	\label{th:karpelevichgeo}
	Let $M=G/K$ be a symmetric space of non-compact type. Then any connected semisimple subgroup $H \subset G$ acts on~$M$ with a totally geodesic orbit.
\end{theorem}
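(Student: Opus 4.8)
The plan is to realize the totally geodesic orbit as the orbit through a carefully chosen base point, the choice being dictated by the interplay between the Cartan decomposition of $\g{g}$ and that of the Lie algebra $\g{h}\subset\g{g}$ of $H$. The central structural input I would invoke is Mostow's compatibility lemma: since $\g{h}$ is semisimple (hence reductive in $\g{g}$), there exists a Cartan involution $\theta$ of $\g{g}$ which leaves $\g{h}$ invariant and whose restriction $\theta|_{\g{h}}$ is a Cartan involution of $\g{h}$. Fixing such a $\theta$, let $\g{g}=\g{k}\oplus\g{p}$ be the associated Cartan decomposition and let $o\in M$ be the corresponding base point, so that $\g{k}$ is the isotropy algebra at $o$. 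Because $\theta(\g{h})=\g{h}$, the subalgebra splits compatibly as $\g{h}=\g{h}_{\g{k}}\oplus\g{h}_{\g{p}}$ with $\g{h}_{\g{k}}=\g{h}\cap\g{k}$ and $\g{h}_{\g{p}}=\g{h}\cap\g{p}$, and this is precisely the Cartan decomposition of $\g{h}$.

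The next step is purely algebraic and uses the bracket relations $[\g{k},\g{p}]\subset\g{p}$ and $[\g{p},\g{p}]\subset\g{k}$. Since $\g{h}$ is a subalgebra, $[\g{h}_{\g{p}},\g{h}_{\g{p}}]\subset\g{h}\cap\g{k}=\g{h}_{\g{k}}$, and therefore $[[\g{h}_{\g{p}},\g{h}_{\g{p}}],\g{h}_{\g{p}}]\subset[\g{h}_{\g{k}},\g{h}_{\g{p}}]\subset\g{h}\cap\g{p}=\g{h}_{\g{p}}$. Thus $V:=\g{h}_{\g{p}}$ is a Lie triple system in $\g{p}=T_oM$, and by Cartan's correspondence recalled in \S\ref{subsect:pretot} it is the tangent space at $o$ of a complete totally geodesic submanifold $\Sigma\subset M$, namely the orbit $G'\cdot o$ of the connected subgroup $G'\subset G$ with Lie algebra $\g{g}'=[V,V]\oplus V$.

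It then remains to identify $\Sigma$ with the orbit $H\cdot o$. On one hand, $\g{g}'=[\g{h}_{\g{p}},\g{h}_{\g{p}}]\oplus\g{h}_{\g{p}}\subset\g{h}$, so $G'\subset H$ and hence $\Sigma=G'\cdot o\subset H\cdot o$. On the other hand, the isotropy algebra of $H$ at $o$ is $\g{h}\cap\g{k}=\g{h}_{\g{k}}$, whence $\dim(H\cdot o)=\dim\g{h}-\dim\g{h}_{\g{k}}=\dim\g{h}_{\g{p}}=\dim\Sigma$. Consequently $\Sigma$ is an open subset of the connected orbit $H\cdot o$; being of the form $\exp_o(V)$ in the Hadamard manifold $M$, it is also a closed embedded submanifold, hence closed in $H\cdot o$. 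Being nonempty, open and closed in the connected set $H\cdot o$, it follows that $\Sigma=H\cdot o$, so the orbit is totally geodesic.

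The main obstacle is the very first step, Mostow's compatibility lemma; everything afterward is formal. I would prove it by the standard fixed-point argument: a maximal compact subgroup $H_K$ of $H$ acts on the Hadamard manifold $M$ and, by the Cartan fixed point theorem, has a fixed point, which forces $\mathrm{Lie}(H_K)$ into a maximal compact subalgebra $\g{k}$. The genuine difficulty is to upgrade the resulting inclusion $\g{h}\cap\g{k}=\mathrm{Lie}(H_K)$ to full invariance $\theta(\g{h})=\g{h}$, which one obtains by a minimization argument on the symmetric space of Cartan involutions of $\g{g}$, equivalently by conjugating $H$ into self-adjoint position with respect to $\theta$. I would isolate this statement as a separate lemma and treat the rest of the argument as its consequence.
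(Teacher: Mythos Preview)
The paper does not prove this theorem; it is quoted as a known result with references to Karpelevich, Mostow, and Di~Scala--Olmos, and the equivalent algebraic formulation is then recorded as Theorem~\ref{th:karpelevichalg}. Your argument is correct and is exactly the standard deduction of the geometric statement from the algebraic one: once Mostow's compatibility lemma---which is nothing other than Theorem~\ref{th:karpelevichalg}---is in hand, the remainder is the Lie triple system mechanism the paper itself recalls in \S\ref{subsect:pretot}, together with a routine open--closed argument to identify $\Sigma$ with $H\cdot o$. You have also correctly isolated where the genuine content lies, namely in establishing the existence of a compatible Cartan involution; the paper treats this as input from the literature rather than proving it.
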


An equivalent, more algebraic formulation, see~\cite[Corollary 1, p.~46]{Onisbook}, is the following.
\begin{theorem}
	\label{th:karpelevichalg}
	Let $f\colon \g{h} \rightarrow \g{g}$ be a homomorphism of real semisimple Lie algebras and let a Cartan decomposition $\g{h} = \g{k}' \oplus \g{p}'$ be given. Then there exists a Cartan decomposition $\g{g} = \g{k} \oplus \g{p}$ such that $f(\g{k}') \subset \g{k}$ and $f(\g{p}')\subset \g{p}$.
\end{theorem}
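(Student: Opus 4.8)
The plan is to deduce this algebraic statement from the geometric Karpelevich Theorem~\ref{th:karpelevichgeo}, together with the fact recalled above that any two Cartan involutions of a real semisimple Lie algebra differ by an inner automorphism.

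First I would reduce to the case where $f$ is injective and $\g{g}$ is of noncompact type. Since $f$ is a homomorphism of semisimple Lie algebras, $\Ker f$ is an ideal of $\g{h}$, hence a sum of simple ideals; as such it is preserved by the Cartan involution $\theta'$ of $\g{h}$ associated with $\g{h}=\g{k}'\oplus\g{p}'$, so $\theta'$ descends to $\g{h}/\Ker f$ and $f$ factors through an injective homomorphism $\bar f\colon\g{h}/\Ker f\to\g{g}$; a Cartan decomposition of $\g{g}$ adapted to $\bar f$ is automatically adapted to $f$. To dispose of the compact ideals of $\g{g}$, write $\g{g}=\g{g}_{c}\oplus\g{g}_{nc}$ with $\g{g}_{c}$ the sum of the compact simple ideals: the projection $f(\g{h})\to\g{g}_{c}$ is a homomorphism onto a compact semisimple algebra, so it kills every noncompact simple ideal of $f(\g{h})$ and in particular annihilates $f(\g{p}')$; hence it suffices to produce a Cartan involution on $\g{g}_{nc}$ adapted to the induced data and to take the identity on $\g{g}_{c}$. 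After these reductions I set $\g{a}:=f(\g{h})$, a semisimple subalgebra of the noncompact-type algebra $\g{g}$, and let $\theta_{\g{a}}:=f\circ\theta'\circ f^{-1}$ be the Cartan involution of $\g{a}$ transported from $\g{h}$.

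Now fix any Cartan involution $\theta_{0}$ of $\g{g}$, let $M=G/K$ be the associated symmetric space of noncompact type with base point $o$, and let $H\subseteq G$ be the connected subgroup with Lie algebra $\g{a}$. By Theorem~\ref{th:karpelevichgeo}, $H$ has a totally geodesic orbit $\Sigma=H\cdot p$. The crux of the argument --- and the step I expect to be the main obstacle --- is to convert this orbit into the \emph{algebraic} statement that $\g{a}$ is invariant under some Cartan involution. As a totally geodesic submanifold of a space of nonpositive curvature, $\Sigma$ is itself a complete, simply connected symmetric space of nonpositive curvature, and the isotropy of $H$ at any of its points is compact. Choosing a maximal compact subgroup $K_{H}\subseteq H$ and applying the Cartan fixed point theorem to its action on the Hadamard manifold $\Sigma$, I obtain a point $q\in\Sigma$ fixed by $K_{H}$; maximality forces the isotropy to equal $K_{H}$, so that $\g{a}\cap\g{k}_{q}$ is a maximal compact subalgebra of $\g{a}$, where $\g{g}=\g{k}_{q}\oplus\g{p}_{q}$ is the Cartan decomposition at $q$ with involution $\theta_{q}$. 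The remaining and genuinely substantive point is that at such a point $\g{a}$ is actually $\theta_{q}$-stable, i.e. the transvections of the orbit $\Sigma=H\cdot q$ lie in $\g{a}\cap\g{p}_{q}$ rather than merely in $\proj_{\g{p}_{q}}(\g{a})$; this is exactly the content that makes the geometric and algebraic formulations equivalent. Granting it, $\theta_{q}$ restricts to a Cartan involution of $\g{a}$.

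Finally I would match the two Cartan involutions of $\g{a}$. Both $\theta_{q}|_{\g{a}}$ and $\theta_{\g{a}}$ are Cartan involutions of the semisimple algebra $\g{a}$, so there is $\Phi\in\Int(\g{a})$ with $\theta_{q}|_{\g{a}}=\Phi\,\theta_{\g{a}}\,\Phi^{-1}$. Writing $\Phi=\Ad(h)|_{\g{a}}$ for a suitable $h\in H$, the inner automorphism $\tilde\Phi=\Ad(h)\in\Int(\g{g})$ preserves $\g{a}$ and restricts to $\Phi$ there. I then set $\theta:=\tilde\Phi^{-1}\circ\theta_{q}\circ\tilde\Phi$, which is again a Cartan involution of $\g{g}$, being conjugate to $\theta_{q}$ by an inner automorphism, satisfies $\theta(\g{a})=\g{a}$, and restricts on $\g{a}$ to $\Phi^{-1}(\theta_{q}|_{\g{a}})\Phi=\theta_{\g{a}}=f\circ\theta'\circ f^{-1}$. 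Letting $\g{g}=\g{k}\oplus\g{p}$ be the Cartan decomposition determined by $\theta$, the fixed-point space $\g{k}$ contains $f(\g{k}')$ and the $(-1)$-eigenspace $\g{p}$ contains $f(\g{p}')$, which is the desired conclusion.
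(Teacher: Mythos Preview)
The paper does not supply its own proof of this theorem; it is stated as a known reformulation of Karpelevich's result with a reference to Onishchik's lecture notes \cite[Corollary~1, p.~46]{Onisbook}. So there is nothing in the paper to compare your argument against directly.

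Your overall strategy---deduce the algebraic statement from the geometric Theorem~\ref{th:karpelevichgeo} and then adjust the resulting Cartan involution by an inner automorphism---is the natural one, and both the preliminary reductions (to $f$ injective and $\g{g}$ of non-compact type) and the final step (conjugating two Cartan involutions of $\g{a}$ by an element of $\Int(\g{a})$ and transporting back to $\g{g}$) are correctly carried out.

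The argument has, however, a genuine gap exactly where you flag it. After producing a point $q$ with $\g{a}\cap\g{k}_q$ maximal compact in $\g{a}$, you need $\g{a}$ to be $\theta_q$-stable, and you write ``Granting it'' without proof. This does not follow from what you have established. That $H\cdot q$ is totally geodesic only gives that the projection $\pi_{\g{p}_q}(\g{a})=T_q(H\cdot q)$ is a Lie triple system; it does not give $\g{a}\cap\g{p}_q=T_q(H\cdot q)$. Nor does maximal compactness of $\g{a}\cap\g{k}_q$ in $\g{a}$ by itself yield $\theta_q$-invariance of $\g{a}$ in any formal way. This passage is precisely the substantive content of Mostow's theorem (equivalently, of the algebraic Karpelevich theorem you are trying to prove), and the standard arguments for it---Mostow's convexity argument in \cite{mostow}, or the geometric minimisation argument of di~Scala--Olmos \cite{discala-olmos}---are not reproduced in your outline. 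As written the proof is therefore incomplete at its central step; you have reduced the theorem to a statement of the same depth rather than proved it. A smaller point: you also use that $H_q$ is compact, which requires $H$ to be closed in $G$; this is true for connected semisimple subgroups but should be noted.
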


A subalgebra~$\g{h}$ of a real semisimple  Lie algebra~$\g{g}$ is called \emph{canonically embedded}  with respect to some Cartan decomposition~$\g{g} = \g{k} \oplus \g{p}$ if $\g{h} = (\g{h}\cap\g{k}) \oplus (\g{h}\cap\g{p})$. This is equivalent to $\g{h}$ being $\theta$-invariant, where $\theta$ is the Cartan involution associated with the Cartan decomposition ~$\g{g} = \g{k} \oplus \g{p}$.  Let us recall that a subalgebra of a complex semisimple Lie algebra~$\g{g}$ is said to be \textit{reductive} if it is a reductive Lie algebra and its center consists of semisimple elements, i.e.\ of  elements $X \in \g{g}$ for which the linear map $\ad_X$ is a diagonalizable endomorphism of the vector space~$\g{g}$. For a real semisimple Lie algebra $\g{g}$ we say that a subalgebra $\g{h}\subset\g{g}$ is reductive if its complexification is reductive.

An \emph{algebraic group} over $\mathbb{K}\in\{\R,\C\}$ is an affine algebraic variety~$G$ over~$\mathbb{K}$ endowed with a group structure for which the map $G \times G\rightarrow G$, $(x, y)\mapsto xy^{-1}$ is polynomial.  It turns out that an algebraic group over $\mathbb{K}\in\{\R,\C\}$ is a Lie group over~$\mathbb{K}\in\{ \R,\C\}$.
An \emph{algebraic subgroup} of an algebraic group~$G$ is a closed subgroup of~$G$ (in the Zariski topology). An algebraic subgroup is itself an algebraic group.
A Lie algebra $\g{g}$ is \emph{algebraic} if it is the Lie algebra of some irreducible algebraic subgroup~$G$ of~$\mathsf{GL}_n(\mathbb{K})$, with $\mathbb{K}\in\{\R,\C \}$. In particular, semisimple Lie algebras are algebraic, see~\cite[p.~138]{Onishvinberg0}.
Let $G$ be an algebraic group over $\mathbb{K}\in\{ \R,\C\}$ with Lie algebra $\g{g}$.
A Lie subalgebra $\g{h}$ of $\g{g}$ is called \emph{algebraic} if there exists an algebraic subgroup~$H$ of~$G$ with Lie algebra~$\g{h}$.
For any Lie subalgebra~$\g{h}$ of~$\g{g}$ there is a smallest algebraic subalgebra~$\g{h}^a$ of~$\g{g}$ containing~$\g{h}$, called the \textit{algebraic closure} of~$\g{h}$ in~$\g{g}$.
\begin{remark}
\label{rem:alg}
Let $\g{g}$ be a semisimple Lie algebra.  By \cite[Chapter~1, \S6.2, Theorem~6.2]{Onishvinberg0}, if $\g{h}$ is a subalgebra of~$\g{g}$, then $[\g{h},\g{h}]=[\g{h}^a,\g{h}^a]$. Let $\g{l}$ be a maximal proper Lie subalgebra of~$\g{g}$. Then $[\g{l}^a,\g{l}^a]=[\g{l},\g{l}]\neq \g{g}$, and by maximality $\g{l}^a=\g{g}$. Hence a maximal proper Lie subalgebra of a semisimple Lie algebra is algebraic.
\end{remark}

A more general version of Theorem~\ref{th:karpelevichalg} can be formulated as follows, see~\cite[Chapter 6, Theorem 3.6]{Onishvinberg}.

\begin{theorem}[Karpelevich-Mostow]
\label{th:karpelevichmostow}
An algebraic subalgebra of a real semisimple Lie algebra~$\g{g}$ is reductive if and only if it is canonically embedded in~$\g{g}$ with respect to some Cartan decomposition of~$\g{g}$.
\end{theorem}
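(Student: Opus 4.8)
The plan is to reformulate the statement and then treat the two implications separately. Fix a Cartan decomposition $\g{g}=\g{k}\oplus\g{p}$ with Cartan involution $\theta$. Since $\theta$ is an involution, $\g{h}$ is \emph{canonically embedded} with respect to this decomposition if and only if $\theta(\g{h})=\g{h}$: in that case $\g{h}$ is the direct sum of its $(+1)$- and $(-1)$-eigenspaces under $\theta|_{\g{h}}$, which are exactly $\g{h}\cap\g{k}$ and $\g{h}\cap\g{p}$, and the converse is immediate. So the theorem asserts that $\g{h}$ is reductive precisely when some Cartan involution of $\g{g}$ preserves $\g{h}$.

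For the implication ``canonically embedded $\Rightarrow$ reductive'' I would equip $\g{g}$ with the inner product $\langle X,Y\rangle=-\mathcal{B}_{\g{g}}(\theta X,Y)$, which is positive definite precisely because the decomposition is a Cartan decomposition. Using $\Ad$-invariance of $\mathcal{B}_{\g{g}}$ and the fact that $\theta$ is an automorphism, a short computation gives $\theta\ad_X=\ad_{\theta X}\theta$ and hence that the metric adjoint of $\ad_X$ is $(\ad_X)^{*}=-\ad_{\theta X}$. Thus $\theta$-invariance of $\g{h}$ makes $\ad(\g{h})\subset\g{gl}(\g{g})$ invariant under transposition $A\mapsto A^{*}$. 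A transpose-closed subalgebra of $\g{gl}(\g{g})$ acts completely reducibly and is reductive, and the elements of its center, commuting with their own transpose, are normal, hence diagonalizable over $\C$. Since $\g{g}$ is semisimple, $\ad$ is faithful, so $\g{h}\cong\ad(\g{h})$ is reductive; and because $\theta$ maps the center of $\g{h}$ to itself, the center of $\g{h}$ consists of semisimple elements of $\g{g}$, as required.

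For the converse I would write $\g{h}=\g{s}\oplus\g{c}$ with $\g{s}=[\g{h},\g{h}]$ semisimple and $\g{c}=\g{z}(\g{h})$ an algebraic abelian subalgebra of semisimple elements; by the reformulation it suffices to produce a single Cartan involution $\theta$ with $\theta(\g{s})=\g{s}$ and $\theta(\g{c})=\g{c}$. First I would apply Theorem~\ref{th:karpelevichalg} to the inclusion $\g{s}\hookrightarrow\g{g}$, obtaining a Cartan decomposition $\g{g}=\g{k}\oplus\g{p}$, with involution $\theta$, for which $\g{s}$ is $\theta$-stable. Because $\theta$ is an automorphism fixing $\g{s}$, the centralizer $\g{z}:=\g{z}_{\g{g}}(\g{s})$ is itself $\theta$-stable, is reductive, and $\theta|_{\g{z}}$ is a Cartan involution of $\g{z}$; moreover $\g{c}\subset\g{z}$. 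Now I would split the torus $\g{c}=\g{c}_e\oplus\g{c}_h$ into its elliptic part ($\ad$ with imaginary eigenvalues) and its hyperbolic part ($\ad$ diagonalizable over~$\R$), the shadow of the decomposition of the algebraic torus $\g{c}$ into its maximal compact and maximal split subtori. Inside the reductive algebra $\g{z}$ one can choose a Cartan involution $\theta'$ with $\g{c}_h$ in the $(-1)$-eigenspace (embed $\g{c}_h$ into a maximal $\R$-split abelian subspace of $\g{z}$) and hence $\g{c}_e$ in the $(+1)$-eigenspace (the elliptic elements of the centralizer of that split subspace lie in its compact part).

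It remains to glue $\theta'$ back to a Cartan involution of all of $\g{g}$ that still stabilizes $\g{s}$, and this is the step I expect to be the main obstacle. The key point is that $\theta'=\Ad(\exp Y)\,\theta\,\Ad(\exp Y)^{-1}$ on $\g{z}$ for some $Y\in\g{z}\cap\g{p}$, since any two Cartan involutions of $\g{z}$ are conjugate by such an inner automorphism. I would then define $\widetilde{\theta}:=\Ad(\exp Y)\,\theta\,\Ad(\exp Y)^{-1}$ on $\g{g}$; it is again a Cartan involution because any $\Aut(\g{g})$-conjugate of a Cartan involution is one (conjugation pulls back the positive definite form $\langle\cdot,\cdot\rangle$ to another positive definite form). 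Crucially, $Y$ centralizes $\g{s}$, so $\Ad(\exp Y)$ fixes $\g{s}$ pointwise and therefore $\widetilde{\theta}(\g{s})=\g{s}$, while $\widetilde{\theta}|_{\g{z}}=\theta'$ stabilizes $\g{c}$. Hence $\widetilde{\theta}(\g{h})=\g{h}$, which gives the canonical embedding. The care needed here is exactly to verify that the adjustment forced by the central torus can be performed within the $\theta$-stable reductive centralizer of $\g{s}$ without disturbing $\g{s}$, which is why one must check that the connecting element lies in $\g{z}\cap\g{p}$.
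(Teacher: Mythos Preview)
The paper does not give a proof of this theorem; it is stated in the preliminaries as a known result attributed to Karpelevich and Mostow, with the reader referred to~\cite{karpelevich}, \cite{mostow}, and~\cite[Corollary~1, p.~46]{Onisbook}. So there is no ``paper's own proof'' against which to compare.

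That said, your outline is essentially the standard argument and is sound. The forward implication via self-adjointness of $\ad(\g{h})$ with respect to $-\mathcal{B}_{\g{g}}(\theta\cdot,\cdot)$ is correct and complete. For the converse, your strategy---apply Theorem~\ref{th:karpelevichalg} to the derived algebra~$\g{s}$, then adjust the Cartan involution inside the $\theta$-stable reductive centralizer~$\g{z}=\g{z}_{\g{g}}(\g{s})$ to absorb the central torus~$\g{c}$---is exactly Mostow's approach. The one place that deserves more care is the simultaneous placement of~$\g{c}_h$ into the $(-1)$-eigenspace and~$\g{c}_e$ into the $(+1)$-eigenspace: as written you claim this in a single stroke, but you should do it in two steps. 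First conjugate within~$\g{z}$ (by an element of $\exp(\g{z}\cap\g{p})$, as you note) so that $\g{c}_h\subset\g{p}$; then pass to the $\theta$-stable centralizer $\g{z}_{\g{z}}(\g{c}_h)$ and conjugate again, by an element of its $\g{p}$-part, to bring the elliptic torus~$\g{c}_e$ into~$\g{k}$. The second conjugation commutes with both~$\g{s}$ and~$\g{c}_h$, so nothing already placed is disturbed. Making this two-stage refinement explicit closes the only genuine gap in your sketch.
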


\subsection{Complexification of subalgebras}
\label{subsect:precomplex}
Let $\g{g}$ be a real Lie algebra. Recall that its \emph{complexification} is defined by $\g{g}_\C :=\g{g} \otimes_\R \C$.
Conversely, the \emph{realification}~$\g{h}_\R$ of a complex Lie algebra~$\g{h}$ is defined as the real Lie algebra obtained from~$\g{h}$ by restricting the scalars to the reals.
Furthermore, if $\g{g}$ is a complex Lie algebra, then a subalgebra~$\g{g}_0$ of~$\g{g}_\R$ is called a \emph{real form} of~$\g{g}$  if $\g{g} = \g{g}_0 + i\g{g}_0$ and $\g{g}_0 \cap i\g{g}_0=0$, where $i$ is the imaginary unit.
We say that a Lie algebra is of \emph{non-compact type} if it is semisimple and all of its simple ideals are non-compact Lie algebras.
We say that a subalgebra  of a Lie algebra~$\g{g}$ is \emph{maximal of non-compact type} if it is maximal among all proper subalgebras of non-compact type of~$\g{g}$.

A real Lie algebra $\g{g}$ is called \emph{absolutely simple} if it is simple and its complexification $\g{g}_\C$ is a simple complex Lie algebra. In case $\g{g}$ is an absolutely simple real Lie algebra, $\g{g}$ is a real form of the simple complex Lie algebra~$\g{g}_\C$. In case $\g{g}$ is a simple, but not absolutely simple, real Lie algebra, $\g{g}$ is the realification of a simple complex Lie algebra.
\begin{remark}\label{rem:notation}
	In later sections of this article, we do sometimes not distinguish in our notation between a complex Lie algebra and its realification when it is clear from the context which Lie algebra structure we consider.
\end{remark}

\begin{remark}
Recall from the classification of Riemannian symmetric spaces that there are two classes of irreducible Riemannian symmetric spaces of non-compact type. If $G$ is the connected component of the isometry group of such a space~$M=G/K$, we distinguish between the following two cases:
\begin{itemize}
\item Symmetric spaces of~\emph{type~III}: $\g{g}$ is \emph{absolutely simple}.
\item Symmetric spaces of~\emph{type~IV}: $\g{g}$ is \emph{simple, but not absolutely simple}.
\end{itemize}
\end{remark}

\begin{lemma}\label{lemma:complexification}
Let $\g{g}$ be a simple real Lie algebra and let $\g{h}\subset \g{g}$ be a subalgebra. Then the following statements hold:
\begin{enumerate}[i)]
\item  If $\g{h}_\C \subset  \g{g}_ \C$ is a maximal reductive   subalgebra, then $\g{h}\subset\g{g}$ is a maximal reductive  subalgebra.
\item If $\g{h}_\C\subset  \g{g}_ \C$ is a maximal semisimple subalgebra, then $\g{h}\subset\g{g}$ is a maximal semisimple subalgebra.
\end{enumerate}
\end{lemma}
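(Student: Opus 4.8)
The plan is to prove both statements by contraposition, exploiting the relationship between a subalgebra and its complexification together with the structural dichotomy recorded in the preliminaries (type~III versus type~IV). The unifying observation is that complexification sends reductive subalgebras to reductive subalgebras and semisimple subalgebras to semisimple subalgebras, so an enlargement of $\g{h}$ inside $\g{g}$ should produce an enlargement of $\g{h}_\C$ inside $\g{g}_\C$, contradicting maximality of the latter. The delicate point is that complexification is not faithful on the lattice of subalgebras in the type~IV case, so I expect the argument to branch according to whether $\g{g}$ is absolutely simple or not.

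First I would treat the case where $\g{g}$ is absolutely simple, so that $\g{g}_\C$ is a simple complex Lie algebra and $\g{g}$ is a real form of it. Suppose $\g{h}$ were not maximal reductive (resp.\ semisimple); then there is a reductive (resp.\ semisimple) subalgebra $\g{l}$ with $\g{h}\subsetneq\g{l}\subsetneq\g{g}$. Complexifying gives $\g{h}_\C\subseteq\g{l}_\C\subseteq\g{g}_\C$. Here I would argue that each inclusion is proper: since $\dim_\C\g{a}_\C=\dim_\R\g{a}$ for any real subalgebra $\g{a}$, the strict containments of real dimensions force strict containments after complexification. Because $\g{l}$ is reductive (resp.\ semisimple), $\g{l}_\C$ is again reductive (resp.\ semisimple), so $\g{h}_\C$ is properly contained in a proper reductive (resp.\ semisimple) subalgebra of $\g{g}_\C$, contradicting the maximality hypothesis on $\g{h}_\C$. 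This settles the absolutely simple case for both~i) and~ii) simultaneously.

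The main obstacle is the type~IV case, where $\g{g}$ is simple but not absolutely simple, so $\g{g}$ is the realification of a simple complex Lie algebra $\g{s}$ and $\g{g}_\C\cong\g{s}\oplus\bar{\g{s}}$ splits as a sum of two simple ideals. Here the naive complexification argument is more subtle, because the embedding $\g{l}\hookrightarrow\g{l}_\C\hookrightarrow\g{g}_\C$ interacts with this splitting. I would handle this by recalling, as in Remark~\ref{rem:notation}, that a subalgebra of the realification $\g{g}=\g{s}_\R$ corresponds on complexification to a subalgebra of $\g{s}\oplus\bar{\g{s}}$ compatible with the complex conjugation exchanging the two factors. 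The hypothesis is that $\g{h}_\C$ is maximal reductive (resp.\ semisimple) in $\g{g}_\C=\g{s}\oplus\bar{\g{s}}$, and one must verify that a proper reductive (resp.\ semisimple) enlargement $\g{h}\subsetneq\g{l}\subsetneq\g{g}$ still yields a proper reductive (resp.\ semisimple) enlargement of $\g{h}_\C$ that is invariant under conjugation; the point is that $\g{l}_\C$ automatically inherits the conjugation-invariance from $\g{l}$, so it is a genuine competitor against the maximality of $\g{h}_\C$. Once the properness of the inclusions $\g{h}_\C\subsetneq\g{l}_\C\subsetneq\g{g}_\C$ is established by the dimension count above, the same contradiction closes the argument.

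Throughout, the two facts I would lean on are: complexification preserves the properties \emph{reductive} and \emph{semisimple} (for the latter, that $\g{l}$ semisimple implies $\g{l}_\C$ semisimple is standard); and that a strict inclusion of real subalgebras complexifies to a strict inclusion, which is immediate from $\dim_\C\g{a}_\C=\dim_\R\g{a}$. I expect the semisimple statement~ii) to follow formally from the same template as~i), the only difference being which class of subalgebra one propagates through the complexification; no separate idea should be required beyond noting that semisimplicity, like reductivity, is stable under $\otimes_\R\C$.
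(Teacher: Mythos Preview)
Your approach is correct and essentially the same as the paper's: contraposition via complexification, using $\dim_\C\g{a}_\C=\dim_\R\g{a}$ to preserve strict inclusions and the stability of reductivity and semisimplicity under $\otimes_\R\C$. The paper, however, does not split into type~III and type~IV cases; the dimension argument and the maximality hypothesis in $\g{g}_\C$ work uniformly regardless of whether $\g{g}_\C$ is simple, so your concern about conjugation-invariance of competitors in the type~IV case is unnecessary.
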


\begin{proof}

By~\cite[\S2, Proposition~2(i)]{Onisbook}, $\g{h}_\C$ is a semisimple Lie algebra if and only if $\g{h}$ is a semisimple Lie algebra.  Thus, $[\g{h},\g{h}]$ is semisimple if and only if $[\g{h},\g{h}]_{\C}$ is semisimple. Since $[\g{h},\g{h}]_{\C}\oplus Z(\g{h})_\C=[\g{h}_\C,\g{h}_\C]\oplus Z(\g{h}_\C)$, $\g{h}$ is a reductive Lie algebra if and only if $\g{h}_\C$ is a reductive Lie algebra. Moreover, $\mathrm{ad}(Z(\g{h}))$ consists of semisimple elements if and only if $\mathrm{ad}(Z(\g{h}_\C))$ does. Consequently, we have proved that $\g{h}\subset \g{g}$ is a reductive  subalgebra if and only if $\g{h}_\C\subset \g{g}_\C$ is a reductive  subalgebra.
	
Let us assume that $\g{h}$ is a maximal reductive (resp. semisimple) subalgebra of $\g{g}$, and that there is some reductive  (resp.\ semisimple) subalgebra $\g{l}\subsetneq\g{g}$ such that $\g{h}\subset \g{l}\subset \g{g}$. Then $\g{h}_\C \subset \g{l}_\C\subset \g{g}_\C$ and $\g{l}_\C$ is reductive  (resp.\ semisimple). Since $\g{h}_\C$ is a maximal reductive   (resp.\ maximal semisimple) subalgebra, we have that $\g{h}_\C=\g{l}_\C$.
Therefore, $\g{h}$ and $\g{l}$ have the same dimension and we conclude that $\g{h}=\g{l}$.
\end{proof}

A subalgebra $\g{h}\subset \g{g}$ of a complex semisimple Lie algebra~$\g{g}$ is a \emph{regular} subalgebra if $\g{h}$ is normalized by some Cartan subalgebra $\g{a}\subset \g{g}$.  Following Dynkin~\cite{Dynkin}, we say that a subalgebra $\g{h}\subset \g{g}$ is an \emph{R-subalgebra} if it is contained in a regular proper subalgebra, and we say that it is an \emph{S-subalgebra} if it is not contained in a regular proper subalgebra.
In case where $\g{g}$ is not absolutely simple, its maximal subalgebras of non-compact type are described by Lemma~\ref{lemma:maximalcomplexsemisimple}, which is similar to~\cite[Appendix to Chapter~1, Theorem 1.6]{Dynkin2}.

\begin{lemma} 	\label{lemma:maximalcomplexsemisimple} 	
	Let $\g{g}$ be a simple complex Lie algebra and $\g{h} \subset \g{g}_\R$ be a subalgebra which is maximal among the subalgebras of non-compact type. Then $\g{h}$ coincides with one of the following: 	
	
	\begin{enumerate}[i)]
		\item a maximal semisimple regular subalgebra of~$\g{g}$,
		\item a maximal S-subalgebra of~$\g{g}$,	
		\item a non-compact real form of~$\g{g}$. 	
	\end{enumerate}
	Conversely, all of the above are maximal subalgebras of non-compact type of~$\g{g}_\R$.
\end{lemma}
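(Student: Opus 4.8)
We must show that a subalgebra $\g{h} \subset \g{g}_\R$ (with $\g{g}$ simple complex) which is maximal among subalgebras of non-compact type is exactly one of the three listed types, and conversely that each listed type is such a maximal subalgebra. The strategy splits naturally according to whether $\g{h}$ is a real form of $\g{g}$ or not. The key structural tool is that $\g{g}_\R$ has a distinguished complex structure $J$ (multiplication by $i$), and a subalgebra $\g{h}$ either is invariant under $J$ — hence is (the realification of) a complex subalgebra of $\g{g}$ — or it is not, in which case $\g{h} + J\g{h}$ is larger. The plan is to use this dichotomy together with Dynkin's classification of maximal complex subalgebras into regular subalgebras and S-subalgebras.

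**Forward direction.** First I would observe that every non-compact real form of $\g{g}$ is of non-compact type and is a maximal subalgebra of non-compact type: any subalgebra of $\g{g}_\R$ strictly containing a real form $\g{g}_0$ with $\g{g} = \g{g}_0 \oplus i\g{g}_0$ must, by dimension count over $\R$, be all of $\g{g}_\R$, which is of compact type only if $\g{g}$ is, contradiction; so real forms are indeed maximal of non-compact type. This handles case (iii). Now suppose $\g{h}$ is not contained in any real form. I would argue that then $\g{h}$ must be $J$-invariant, so that $\g{h} = \g{m}_\R$ for a complex subalgebra $\g{m} \subsetneq \g{g}$. Indeed, if $\g{h}$ were not $J$-invariant, then $\g{h} \subsetneq \g{h} + J\g{h}$; but $\g{h}+J\g{h}$ need not be a subalgebra, so the cleaner route is: a non-compact-type subalgebra not inside a real form, when maximal, must be the realification of a complex subalgebra. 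I would justify this by noting that a complex subalgebra $\g{m}$ has realification of non-compact type precisely when $\g{m}$ is semisimple, and any $J$-invariant non-compact-type subalgebra is of this form; then maximality of $\g{h} = \g{m}_\R$ among non-compact-type subalgebras forces $\g{m}$ to be a maximal semisimple complex subalgebra of $\g{g}$. By Dynkin's theory, a maximal semisimple complex subalgebra is either a maximal S-subalgebra or a maximal semisimple regular subalgebra, giving cases (i) and (ii).

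**Converse direction.** Here I would verify that each of the three types really is maximal among subalgebras of non-compact type. For a maximal S-subalgebra or maximal semisimple regular subalgebra $\g{m} \subset \g{g}$, its realification $\g{m}_\R$ is semisimple with all simple factors non-compact, hence of non-compact type; I must check no non-compact-type subalgebra sits strictly between $\g{m}_\R$ and $\g{g}_\R$. Any such intermediate subalgebra $\g{n}$ either is $J$-invariant — then $\g{n} = \g{m}'_\R$ for a complex subalgebra $\g{m} \subsetneq \g{m}' \subsetneq \g{g}$, contradicting maximality of $\g{m}$ — or is not $J$-invariant, in which case I would argue that $\g{n}$ generates too much (its algebraic/complex span exceeds $\g{m}$) again contradicting maximality of $\g{m}$ as a complex subalgebra. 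The real-form case (iii) was already dispatched above.

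**Main obstacle.** The delicate point, and where I expect the real work to lie, is the non-$J$-invariant case: controlling a non-compact-type subalgebra $\g{n}$ of $\g{g}_\R$ that is \emph{not} the realification of a complex subalgebra and showing it cannot be maximal unless it is a real form. The issue is that $\g{n} + J\g{n}$ is generally not closed under the bracket, so one cannot immediately pass to a complex subalgebra. I would handle this by passing to the complexification $(\g{g}_\R)_\C \cong \g{g} \oplus \bar{\g{g}}$ and tracking the projections of $\g{n}$ to the two factors: a semisimple $\g{n}$ projects to semisimple subalgebras of $\g{g}$ and of $\bar{\g{g}}$, and analyzing when these projections are proper, equal to the whole, or graph-like (yielding a real form via an isomorphism $\g{g} \to \bar{\g{g}}$) is exactly the mechanism that produces the trichotomy. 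This projection analysis, essentially Goursat's lemma adapted to the two-sided complexification, is the technical heart; once it is in place, invoking Dynkin's classification of maximal complex semisimple subalgebras finishes the argument.
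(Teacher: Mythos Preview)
Your central ``main obstacle'' is a phantom: for any real subalgebra $\g{h}\subset\g{g}_\R$, the sum $\g{h}+i\g{h}$ \emph{is} always a complex subalgebra of~$\g{g}$. This is elementary once you remember that the bracket of~$\g{g}$ is $\C$-bilinear, so $[\g{h},i\g{h}]=i[\g{h},\g{h}]\subset i\g{h}$ and $[i\g{h},i\g{h}]=-[\g{h},\g{h}]\subset\g{h}$. Equally elementary is that $\g{h}\cap i\g{h}$ is an ideal of $\g{h}+i\g{h}$. These two facts (recorded by Dynkin in the appendix to~\cite{Dynkin2}) are exactly what the paper uses, and they dissolve the difficulty you flag. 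With them in hand the dichotomy is immediate: either $\g{h}+i\g{h}=\g{g}$, whence $\g{h}\cap i\g{h}$ is an ideal of the simple algebra~$\g{g}$, so it vanishes and $\g{h}$ is a real form; or $\g{h}+i\g{h}$ is a proper complex subalgebra, hence sits inside a maximal (regular reductive or S-) subalgebra of~$\g{g}$, and maximality of~$\g{h}$ among non-compact-type subalgebras forces $\g{h}$ to coincide with a maximal semisimple regular subalgebra or a maximal S-subalgebra. No passage to $(\g{g}_\R)_\C\cong\g{g}\oplus\bar{\g{g}}$ or Goursat-type argument is needed.

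Two smaller points. First, your ``dimension count'' for why a real form is maximal is not an argument: there is no a~priori reason a subalgebra cannot have real dimension strictly between $\dim_\R\g{g}_0$ and $2\dim_\R\g{g}_0$. The correct reasoning uses simplicity: if $\g{n}\supsetneq\g{g}_0$, pick $X=A+iB\in\g{n}\setminus\g{g}_0$ with $A,B\in\g{g}_0$, so $iB\in\g{n}$ with $B\neq0$; then the ideal of~$\g{g}_0$ generated by~$B$ is all of~$\g{g}_0$, giving $i\g{g}_0\subset\g{n}$. Second, in the converse you must check that maximal semisimple regular subalgebras and maximal S-subalgebras are in fact maximal \emph{semisimple} subalgebras of~$\g{g}$ (not merely maximal within their own class); the paper invokes~\cite[No.~24, Theorem~7.3]{Dynkin} for the S-case, and you should do likewise rather than leave this implicit.
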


\begin{proof}
	Let $\g{h}$ be a subalgebra which is maximal among the subalgebras of non-compact type of~$\g{g}_\R$.
	For all subalgebras of~$\g{g}_\R$ we have that
	$\g{h}+i\g{h}$ is a complex subalgebra of~$\g{g}$ and $\g{h}_0 := \{ X \in \g{h} \colon \lambda X \in \g{h} \hbox{~for~all~}\lambda \in \C \} = \g{h}\cap i\g{h}$ is an ideal of~$\g{h}+i\g{h}$, see~\cite[Appendix to Chapter~1]{Dynkin2}.
	If $\g{h}+i\g{h}=\g{g}$, then it follows that $\g{h}_0$ is an ideal of~$\g{g}$. Since $\g{g}$ is simple, it follows that~$\g{h}_0=0$ and hence that $\g{h}$ is a real form of~$\g{g}$. Obviously, $\g{h}$ is not a compact real form.
	
	Now we may assume that $\g{h}+i\g{h} \neq \g{g}$.
	Then $\g{h}+i\g{h}$ is contained in some maximal subalgebra~$\widetilde{\g{h}}$ of~$\g{g}$.
	Consider first the case where there is a maximal regular  reductive subalgebra~$\widetilde{\g{h}}$ containing $\g{h}+i\g{h}$. The maximal regular reductive subalgebras of simple complex Lie algebras were classified in~\cite[Chapter~II, \S5, No.~18]{Dynkin}.
	They are given in~\cite[Tables~12 and 12a]{Dynkin}, see also the remark at the beginning of the proof~\cite[Theorem~5.5]{Dynkin}.
	If the subalgebra $\widetilde{\g{h}}$ of the complex Lie algebra~$\g{g}$ is semisimple, then it is automatically a subalgebra of non-compact type of~$\g{g}_\R$. Hence in this case we have $\g{h} = \widetilde{\g{h}}$ and $\g{h}$ is one of the subalgebras given in~\cite[Table~12]{Dynkin}.
	On the other hand, if the subalgebra $\widetilde{\g{h}}$ of~$\g{g}$ is non-semisimple, then $\g{h}+i\g{h}$ is contained in one of the subalgebras in~\cite[Table~12a]{Dynkin} and since these are maximal semisimple regular subalgebras and also subalgebras of non-compact type in~$\g{g}_\R$, by maximality, $\g{h}$ is conjugate to one of them.
	
	It remains the case when $\g{h}+i\g{h}$ is not contained in a regular subalgebra of~$\g{g}$. Then it will be contained in a maximal S-subalgebra of~$\g{g}$. It follows from~\cite[Theorem~7.3]{Dynkin}, that every non-semisimple subalgebra of a complex semisimple Lie algebra is an R-subalgebra. Therefore we know that S-subalgebras of~$\g{g}$ are semisimple and hence are also subalgebras of non-compact type in~$\g{g}_\R$. By maximality, it follows now that $\g{h}$ is a maximal S-subalgebra of~$\g{g}$ in this case.
	
Let us now show that the converse holds. Since the real forms of~$\g{g}$ are simple they are either compact or of non-compact type. The subalgebras in \textit{i}) and, by \cite[No.~24, Theorem 7.3]{Dynkin}, \textit{ii}) are maximal semisimple subalgebras of~$\g{g}$ and they are of non-compact type, hence maximal subalgebras  of non-compact type.
\end{proof}

\section{Maximal semisimple totally geodesic submanifolds}
\label{sect:maximalsemi}
We prove below a theorem that establishes a one-to-one correspondence between maximal semisimple totally geodesic submanifolds in symmetric spaces of non-compact type and certain subalgebras of the Lie algebra of the isometry group of the ambient space.

A result of Alekseevsky and Di Scala~\cite[Proposition~ 5.5]{Alekseevsky} states that in a symmetric space of non-compact type $M= G/ K$, if the action of a subgroup of $G$ has two totally geodesic orbits, they are isomorphic as homogeneous spaces. Improving on this result, we prove the following statement which provides uniqueness up to congruence for the existence statement in Theorem~\ref{th:karpelevichgeo}. However, we note that, in this result, the group acting upon is not assumed to be semisimple.

\begin{proposition}
	\label{prop:congruency}
	Let $M=G/K$ be a symmetric space of non-compact type and $H\subset G$ be a connected Lie subgroup. Then all totally geodesic $H$-orbits are congruent in~$M$.
\end{proposition}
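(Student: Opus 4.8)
The plan is to translate the totally geodesic condition into the language of Lie triple systems and then reduce the congruence statement to a conjugacy statement for Cartan involutions. Suppose $\Sigma_1 = H\cdot p_1$ and $\Sigma_2 = H\cdot p_2$ are both totally geodesic. For each $a\in\{1,2\}$ let $\theta_a$ denote the Cartan involution of~$\g{g}$ determined by the geodesic symmetry $s_{p_a}$, with Cartan decomposition $\g{g}=\g{k}_a\oplus\g{p}_a$ and $\g{p}_a\cong T_{p_a}M$. Since $\Sigma_a$ is totally geodesic, its tangent space $V_a:=T_{p_a}\Sigma_a=\proj_{\g{p}_a}(\g{h})$ is a Lie triple system, and by Cartan's theorem $\Sigma_a=\exp_{p_a}(V_a)=G_a\cdot p_a$, where $\g{g}_a:=[V_a,V_a]\oplus V_a$ is the $\theta_a$-invariant reductive transvection algebra of the symmetric space~$\Sigma_a$. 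Thus a totally geodesic $H$-orbit is completely encoded by such a $\theta_a$-invariant reductive subalgebra together with a compatible base point.

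First I would record the key reduction: to prove that $\Sigma_1$ and $\Sigma_2$ are congruent it suffices to produce an element $g\in N_G(H)^0$ of the normalizer with $g\cdot p_1=p_2$, since then $g\Sigma_1=gH\cdot p_1=Hg\cdot p_1=H\cdot p_2=\Sigma_2$; in fact it already suffices that $g\cdot p_1\in\Sigma_2$. In this way the problem is reduced to showing that the two base points---equivalently, the two Cartan involutions $\theta_1,\theta_2$ that $\g{h}$ ``sees''---lie in a single orbit of $N_G(H)^0$. This is where the Karpelevich--Mostow theorem (Theorem~\ref{th:karpelevichmostow}) and the conjugacy of Cartan involutions enter.

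The core step is therefore to analyse the set $\mathcal{T}$ of points $p\in M$ for which $H\cdot p$ is totally geodesic and to show that $N_G(H)^0$ acts transitively on~$\mathcal{T}$. I would first treat the case in which $\g{h}$ is reductive and algebraic: by Theorem~\ref{th:karpelevichmostow} it is canonically embedded with respect to a Cartan decomposition, and Mostow's conjugacy theorem guarantees that any two Cartan involutions of~$\g{g}$ leaving $\g{h}$ invariant are conjugate by an inner automorphism coming from the analytic subgroup generated by~$\g{h}$, hence from $N_G(H)^0$; translating this conjugacy back to base points yields the desired~$g$. For general $\g{h}$ I would treat the Levi factor and the radical of $\g{h}$ separately: the reductive part is controlled as above through the algebraic closure (using $[\g{h},\g{h}]=[\g{h}^a,\g{h}^a]$ from Remark~\ref{rem:alg}), while the solvable part is pinned down by the rigidity coming from the requirement that $\proj_{\g{p}_a}(\g{h})=V_a$ be a Lie triple system.

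The main obstacle is exactly that $\g{h}$ need not itself be reductive nor $\theta_a$-invariant: as the example of the Iwasawa subgroup of $\mathsf{SL}_2(\R)$ acting on the real hyperbolic plane shows, a non-reductive~$H$ can have a totally geodesic (there, non-proper) orbit while $\g{h}$ fails to be $\theta_o$-stable. Consequently one cannot apply Theorem~\ref{th:karpelevichmostow} to $\g{h}$ directly, and the delicate point is to ensure that the element conjugating $\theta_1$ to~$\theta_2$ can be chosen inside $N_G(H)^0$ rather than merely inside $N_G(\g{g}_a)^0$; otherwise it would move $\Sigma_1$ to a totally geodesic submanifold congruent to~$\Sigma_2$ but possibly not an $H$-orbit. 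I expect this to be resolved by combining the conjugacy of the maximal compactly embedded structures of~$\g{h}$, so that the Levi and solvable parts are simultaneously normalised, with the constraint that $V_a$ be a Lie triple system, which rules out the tilted configurations that would otherwise break transitivity of $N_G(H)^0$ on~$\mathcal{T}$.
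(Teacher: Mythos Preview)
Your approach via conjugacy of Cartan involutions is genuinely different from the paper's and, as written, has a real gap. The paper's argument is purely geometric: it invokes the result of Alekseevsky and Di Scala (Proposition~5.5 of~\cite{Alekseevsky}), which says that any two totally geodesic $H$-orbits have the same dimension and lie in a common $H$-invariant totally geodesic submanifold $N\cong H\cdot p \times \R$; one then joins the two orbits by a geodesic~$\gamma$ orthogonal to $H\cdot p$ inside~$N$, checks via a Killing-field computation that $\gamma$ is also orthogonal to the second orbit, and uses the geodesic symmetry $s_x$ of~$M$ at the midpoint of~$\gamma$ to carry one orbit onto the other. No structure theory of~$\g{h}$ is used.

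The gap in your outline is the implication ``$H\cdot p_a$ totally geodesic $\Rightarrow$ $\theta_{p_a}(\g{h})=\g{h}$'', which you need in order to feed the pair $(\theta_{p_1},\theta_{p_2})$ into Mostow's conjugacy theorem. Total geodesicity of the orbit only gives that the \emph{projection} $V_a=\proj_{\g{p}_a}(\g{h})$ is a Lie triple system, not that $\g{h}\cap\g{p}_a=V_a$; you provide no argument bridging these two conditions, even in the reductive case. For non-reductive $\g{h}$ you correctly flag the problem with the Iwasawa example, but the concluding ``I expect this to be resolved\dots'' is a statement of hope, not a proof. Note also that your reduction to finding $g\in N_G(H)^0$ asks for more than congruence requires: in the paper's argument the isometry realising the congruence is the symmetry~$s_x$, which need not lie in~$G$, let alone normalise~$H$. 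Your route, if it could be completed, would actually prove the stronger statement that all totally geodesic $H$-orbits are $N_G(H)^0$-equivalent, and it is not clear that this holds for arbitrary connected~$H$.
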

\begin{proof}
	Let $H\cdot p$ and $H\cdot q$ be two totally geodesic orbits of the action of~$H$ on $M$. By~\cite[Proposition~5.5]{Alekseevsky}, we have that $H\cdot p$ and $H\cdot q$ have the same dimension.
	
	Let us suppose that $H\cdot p$ is a point. Then, by the homogeneity of~$M$, we have that $H\cdot p$ and $H\cdot q$ are congruent.

	Now, let us assume that $\dim(H\cdot p)>0$. By~\cite[Proposition~5.5]{Alekseevsky} there exists an $H$-invariant totally geodesic submanifold~$N$ of~$M$ isometric to a Riemannian product $N = H \cdot p \times \R$ such that $H\cdot p, H\cdot q\subset N$. Moreover, let $\gamma$ be the geodesic starting at~$p\in H\cdot p$ whose initial velocity $\dot{\gamma}(0)$ lies in the orthogonal complement of $T_p (H\cdot p)$ in $T_p N$.  This is also a geodesic in~$M$ since $N$ is totally geodesic. Furthermore, $\gamma$ hits $H\cdot q$ at some point $q'\in M$, since $\exp\colon \nu(H\cdot p)\rightarrow M$ is a diffeomorphism, see~\cite[Proposition 3.5 \textit{i)}]{Duality}. We may assume $q'=\gamma(1)$. Let $x = \gamma(\frac12) \in N$. Then $s_x$, the geodesic reflection of~$M$ at~$x$, maps $p\in N$ to~$q'\in H\cdot q$ and preserves~$N$ since~$N$ is a totally geodesic submanifold in~$M$ containing $x$. Also, its differential sends $v\in T_p(H\cdot p)$ to $s_{x*}v\in T_{q'} N$ and
	\[0=\langle v,\dot{\gamma}(0)\rangle=\langle s_{x*} v, s_{x*} \dot{\gamma}(0)\rangle=-\langle s_{x*} v, \dot{\gamma}(1)\rangle.    \]
	This implies that $s_x(H\cdot p)$ is a totally geodesic submanifold of~$N$ passing through $q'\in N$ and orthogonal to~$\gamma$.
	The tangent space to any orbit of~$H$ is generated by Killing vector fields induced by $H$. Let $X$ be a Killing vector field induced by the action of~$H$. Then $\nabla X$ is skew-symmetric, where $\nabla$ stands for the Levi-Civita connection of~$N$. Thus $\langle \nabla_{\dot{\gamma}} X,\dot{\gamma}\rangle=0$, which implies that $\langle X,\dot{\gamma}\rangle $ is constant. Since $\gamma$ is  orthogonal to~$H\cdot p$ at~$p$,  it follows that $\gamma$ is also orthogonal to  $H\cdot q$ at~$q'$. Now we have shown that $s_x(H\cdot p)$ and $H\cdot q$, which both have codimension one in~$N$, are totally geodesic submanifolds of~$N$ passing through~$q'$ with the same tangent space. This shows that $H\cdot p$ and $H\cdot q$ are congruent in~$M$ via $s_x$.\qedhere	
\end{proof}
\begin{remark}
	Notice that the above result only works for symmetric spaces of non-compact type. For instance, the standard action of~$\mathsf{SO}_2$ on~$\mathrm{S}^2$ has two different isometry classes of totally geodesic orbits, namely, the equator and the poles.
\end{remark}

Let $M=G/K$ be an irreducible symmetric space of non-compact type, where $G$ is the connected component of the identity of the isometry group of~$M$ and $K$ is the isotropy subgroup~$G_o$ at~$o\in M$. Let $\Sigma$ be a (complete) totally geodesic submanifold of $M$ passing through $o\in M$.

As it can be deduced from the discussion in~\cite[\S 2]{BO3}, if $\Sigma$ is semisimple, then the Lie algebra $\g{g}_{\Sigma}:=[\g{p}_{\Sigma},\g{p}_{\Sigma}]\oplus \g{p}_{\Sigma}$, where $\g{p}_{\Sigma}=T_o \Sigma$, is isomorphic to the Lie algebra of the isometry group of~$\Sigma$, and $\g{g}_\Sigma$ is a semisimple Lie algebra in this case.
Recall that we say that a semisimple Lie algebra is of non-compact type if each simple ideal of it is non-compact.
If $\g{h}$ is a Lie algebra of non-compact type, then there is some symmetric space~$\Sigma_{\g{h}}$ of non-compact type such that the Lie algebra of its isometry group is $\g{h}$. In particular, if we consider a Cartan decomposition $\g{h}=\g{k}_{\g{h}} \oplus \g{p}_{\g{h}}$, we have $\g{k}_{\g{h}}=[\g{p}_{\g{h}},\g{p}_{\g{h}}]$.

Our ultimate aim would be to
to classify maximal totally geodesic submanifolds in~$M$.
Note that Berndt and Olmos classified in~\cite{BO1} the maximal totally geodesic submanifolds of~$M$ that are non-semisimple. So it remains to find those maximal totally geodesic submanifolds of~$M$ that are semisimple.

Our approach will consist in classifying first maximal semisimple totally geodesic submanifolds (i.e.\ the totally geodesic submanifolds that are maximal among the semisimple ones) and then discarding those that are contained in a non-semisimple totally geodesic submanifold. As announced, we will be able to carry out these tasks for the exceptional symmetric spaces, although many of the results in this article hold in more generality.

We will now prove a theorem which establishes a correspondence between maximal semisimple totally geodesic submanifolds of~$M$ and subalgebras which are maximal among subalgebras of non-compact type of~$\g{g}$.

\begin{theorem}[Correspondence Theorem]\label{th:correspondence}
	Let $M=G/K$ be an irreducible symmetric space of non-compact type and $\g{h} \subset \g{g}$ a subalgebra which is maximal among  subalgebras of non-compact type of~$\g{g}$. Then there is some $p\in M$ such that $\Sigma= H\cdot p$ is a maximal semisimple totally geodesic submanifold of~$M$, where $H\subset G$ is the connected subgroup of~$G$ with Lie algebra~ $\g{h}$.
	
	Conversely, if $\Sigma$ is a maximal semisimple totally geodesic submanifold of $M$, then there is a subalgebra $\g{g}_{\Sigma}$ of~$\g{g}$ which is maximal among subalgebras of non-compact type of~$\g{g}$ such that $G_{\Sigma}\cdot p=\Sigma$ for some $p\in M$, where $G_{\Sigma}$ is the connected subgroup of~$G$ with Lie algebra~ $\g{g}_{\Sigma}$.
\end{theorem}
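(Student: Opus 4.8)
The plan is to work entirely through the dictionary between semisimple totally geodesic submanifolds and canonically embedded subalgebras recalled in \S\ref{subsect:pretot}, moving back and forth between the geometric and algebraic pictures by means of the algebraic Karpelevich--Mostow theorem (Theorem~\ref{th:karpelevichalg}), the existence of totally geodesic orbits (Theorem~\ref{th:karpelevichgeo}), and the congruence statement (Proposition~\ref{prop:congruency}). Throughout I would use that a semisimple totally geodesic submanifold $\Sigma\ni o$ of $M$ corresponds to the $\theta$-invariant subalgebra $\g{g}_\Sigma=\g{k}_\Sigma\oplus\g{p}_\Sigma$ with $\g{p}_\Sigma=T_o\Sigma$ and $\g{k}_\Sigma=[\g{p}_\Sigma,\g{p}_\Sigma]$, that this $\g{g}_\Sigma$ is the isometry algebra of $\Sigma$, and the elementary fact that a semisimple totally geodesic submanifold of a space of non-compact type is again of non-compact type (by the Gauss equation its sectional curvature is non-positive, so it has no compact factors). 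Since $\Sigma$ is a proper submanifold one has $\g{g}_\Sigma\subsetneq\g{g}$ in all cases of interest.

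For the forward direction, let $\g{h}$ be maximal among the proper subalgebras of non-compact type, so that $\g{h}$ is in particular semisimple. I would first invoke Theorem~\ref{th:karpelevichalg} to pick a Cartan decomposition $\g{g}=\g{k}\oplus\g{p}$ in which $\g{h}$ is canonically embedded; then $\g{h}\cap\g{p}$ is a Lie triple system and $\Sigma=H\cdot o$ is a totally geodesic orbit with $T_o\Sigma=\g{h}\cap\g{p}$. As $\g{h}=(\g{h}\cap\g{k})\oplus(\g{h}\cap\g{p})$ is a Cartan decomposition of $\g{h}$ and $\g{h}$ is of non-compact type, one has $\g{h}\cap\g{k}=[\g{h}\cap\g{p},\g{h}\cap\g{p}]$, whence $\g{g}_\Sigma=\g{h}$ and $\Sigma$ is semisimple with isometry algebra $\g{h}$. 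Maximality is then immediate: if $\Sigma\subseteq\Sigma'$ with $\Sigma'$ a proper semisimple totally geodesic submanifold, I would take $o\in\Sigma'$ and read off $\g{p}_\Sigma\subseteq\g{p}_{\Sigma'}$, hence $\g{g}_\Sigma\subseteq\g{g}_{\Sigma'}\subsetneq\g{g}$ with $\g{g}_{\Sigma'}$ of non-compact type; maximality of $\g{h}=\g{g}_\Sigma$ forces $\g{g}_\Sigma=\g{g}_{\Sigma'}$ and therefore $\Sigma=\Sigma'$.

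For the converse, let $\Sigma$ be a maximal semisimple totally geodesic submanifold, fix $o\in\Sigma$, and let $\g{g}_\Sigma$ be its (semisimple, non-compact type) isometry algebra. I would enlarge $\g{g}_\Sigma$ to a subalgebra $\g{h}$ that is maximal among proper subalgebras of non-compact type; such $\g{h}$ exists by finite-dimensionality, and maximality relative to $\g{g}_\Sigma$ is automatically absolute maximality. The decisive step is to realise the algebraic inclusion $\g{g}_\Sigma\subseteq\g{h}$ geometrically. Applying Theorem~\ref{th:karpelevichalg} first to $\g{g}_\Sigma\hookrightarrow\g{h}$ and then to $\g{h}\hookrightarrow\g{g}$, I would produce a single Cartan decomposition $\g{g}=\g{k}_1\oplus\g{p}_1$, with base point $o_1$, for which both $\g{h}$ and $\g{g}_\Sigma$ are canonically embedded; then $\g{g}_\Sigma\cap\g{p}_1\subseteq\g{h}\cap\g{p}_1$ are nested Lie triple systems, yielding totally geodesic orbits $\Sigma'=G_\Sigma\cdot o_1\subseteq\Sigma_{\g{h}}=H\cdot o_1$. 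By Proposition~\ref{prop:congruency} the orbit $\Sigma'$ is congruent to the original $\Sigma$, since both are totally geodesic $G_\Sigma$-orbits (this holds regardless of which Cartan decomposition of $\g{g}_\Sigma$ the iteration is started from); hence $\Sigma'$ is again maximal semisimple. As $\Sigma_{\g{h}}$ is a proper semisimple totally geodesic submanifold containing $\Sigma'$, maximality gives $\Sigma'=\Sigma_{\g{h}}$, i.e.\ $\g{g}_\Sigma\cap\g{p}_1=\g{h}\cap\g{p}_1$, and taking brackets this yields $\g{g}_\Sigma=\g{h}$. Thus $\g{g}_\Sigma$ is maximal among proper subalgebras of non-compact type and $\Sigma=G_\Sigma\cdot o$, as required.

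I expect the main obstacle to be precisely this last maneuver in the converse: the inclusion $\g{g}_\Sigma\subseteq\g{h}$ is purely algebraic and a priori says nothing about how the corresponding totally geodesic orbits sit inside $M$, because $\g{h}$ need not be invariant under the Cartan involution at $o$. The device that resolves this is the iterated use of Theorem~\ref{th:karpelevichalg}, exploiting its freedom to prescribe the Cartan decomposition on the subalgebra in order to build \emph{nested} compatible Cartan decompositions, together with Proposition~\ref{prop:congruency} to transport the constructed orbit $\Sigma'$ back to $\Sigma$. It is worth stressing that this congruence input is genuinely special to the non-compact type setting, as the remark following Proposition~\ref{prop:congruency} shows, so the argument has no naive analogue in the compact case.
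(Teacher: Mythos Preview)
Your proposal is correct and follows essentially the same approach as the paper: both directions hinge on the iterated application of Theorem~\ref{th:karpelevichalg} to produce nested canonically embedded subalgebras with respect to a common Cartan decomposition, together with Proposition~\ref{prop:congruency} to transport maximality to the new base point. The only cosmetic difference is that in the converse you enlarge $\g{g}_\Sigma$ to one maximal~$\g{h}$ and show equality, whereas the paper takes an arbitrary non-compact type subalgebra $\g{h}\supseteq\g{g}_\Sigma$ and shows equality; the argument is otherwise identical.
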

\begin{proof}
	Let $\g{h}$ be a maximal subalgebra of  non-compact type of~$\g{g}$  and $H$ be the connected Lie subgroup of~$G$ with Lie algebra $\g{h}$. By Theorem~\ref{th:karpelevichalg}, we may assume that $\g{h}$ is canonically embedded with respect to the Cartan decomposition $\g{g}=\g{k}\oplus\g{p}$. Clearly, $\Sigma:=H\cdot o$ is then a semisimple totally geodesic submanifold of~$M$ and we claim that it is maximal among semisimple totally geodesic submanifolds. Let us assume that there is some semisimple totally geodesic submanifold~$\widetilde{\Sigma}$ such that $\Sigma\subset \widetilde{\Sigma}\subset M$.  We may assume $o\in \widetilde{\Sigma}$. Then $\widetilde{\Sigma}$ is of non-compact type. Let $\g{h}=\g{k}_{\g{h}}\oplus\g{p}_{\g{h}}$ be the Cartan decomposition of~$\g{h}$, where $\g{k}_{\g{h}}=\g{k}\cap\g{h}$ and $\g{p}_{\g{h}}=\g{p}\cap\g{h}$. It follows from our assumption that there is a Lie triple system $\g{p}_{\widetilde{\Sigma}}$ such that $\g{p}_{\g{h}}\subset \g{p}_{\widetilde{\Sigma}}\subset \g{p}$. Thus, $\g{k}_{\g{h}}=[\g{p}_{\g{h}},\g{p}_{\g{h}}]\subset [\g{p}_{\widetilde{\Sigma}},\g{p}_{\widetilde{\Sigma}}]=:\g{k}_{\widetilde{\Sigma}}$ and $\g{h}\subset \g{g}_{\widetilde{\Sigma}} := \g{k}_{\widetilde{\Sigma}} \oplus \g{p}_{\widetilde{\Sigma}}$. However,  $\g{g}_{\widetilde{\Sigma}}$ is of non-compact type, since $\widetilde{\Sigma}$ is of non-compact type, and $\g{h}$ is maximal among  subalgebras of~$\g{g}$ of non-compact type. It follows that $\g{h}=\g{g}_{\widetilde{\Sigma}}$ and $\widetilde{\Sigma}=\Sigma$.
	
	Let  $\Sigma$ be a maximal semisimple totally geodesic submanifold passing through $o\in M$ and let $\g{p}_\Sigma$ be the tangent space to~$\Sigma$ at~$o$.	
	Consider $\g{g}_{\Sigma} := [\g{p}_\Sigma,\g{p}_\Sigma] \oplus \g{p}_\Sigma$. Clearly, $\g{g}_{\Sigma}$ is of non-compact type since it is isomorphic to the Lie algebra of the isometry group of~$\Sigma$. We will prove that $\g{g}_{\Sigma}$ is maximal among subalgebras of non-compact type of~$\g{g}$. Let $\g{h}$ be a subalgebra of~$\g{g}$ of non-compact type such that
	$\g{g}_{{{\Sigma}}}\subset \g{h}\subset \g{g}$. We will prove that $\g{g}_{\Sigma}=\g{h}$.  We apply Theorem~\ref{th:karpelevichalg} to~$\g{g}_{{{\Sigma}}}\subset \g{h}$ to find a Cartan decomposition for $\g{h}$ such that
	\[  \g{h}=\g{k}_{\g{h}} \oplus \g{p}_{\g{h}} \qquad \mbox{satisfying} \qquad \g{k}_{{\Sigma}}:=[\g{p}_{\Sigma},\g{p}_{\Sigma}]\subset \g{k}_{\g{h}}, \qquad \g{p}_{{\Sigma}}\subset \g{p}_{\g{h}}.  \]
	Applying Theorem~\ref{th:karpelevichalg} once more to~$\g{h}\subset \g{g}$, we find a Cartan decomposition for $\g{g}$ such that
	\[  \g{g}=\g{k}'\oplus \g{p}' \qquad \mbox{satisfying} \qquad \g{k}_{{\Sigma}}\subset \g{k}_{\g{h}}\subset \g{k}', \qquad \g{p}_{{\Sigma}}\subset \g{p}_{\g{h}}\subset \g{p}'.  \]
	Now since any two Cartan involutions in a real semisimple Lie algebra differ by an inner automorphism, there is some $g\in G$ such that $\g{p}'=\Ad(g)\g{p}$. Thus $\Ad(g^{-1})\g{p}_{\Sigma}$ and $\Ad(g^{-1})\g{p}_{\g{h}}$ are Lie triple systems in~$\g{p}$ since $\Ad(g)\in \Aut(\g{g})$ and $\Ad(g^{-1})\g{p}_{\Sigma}\subset  \Ad(g^{-1})\g{p}_{\g{h}}\subset \g{p}$.  Let $H$ and $G_\Sigma$ be the Lie subgroups of $G$ with Lie algebras $\g{h}$ and $\g{g}_{\Sigma}$, respectively.   If we consider $p:=g\cdot o\in M$, we have that $G_{\Sigma}\cdot p$ and $H\cdot p$ are totally geodesic submanifolds in~$M$, since $g^{-1} G_{\Sigma} g \cdot o$ and  $g^{-1} H g \cdot o$ are totally geodesic, too.  Furthermore, $G_{{\Sigma}}\cdot p$ is contained in~$H\cdot p$. However, $\Sigma=G_{\Sigma}\cdot o$ is maximal among the semisimple totally geodesic submanifolds passing through $o\in M$, hence, by Proposition~\ref{prop:congruency}, $G_{\Sigma}\cdot p$ is maximal among the semisimple totally geodesic submanifolds passing through $p\in M$. Then $G_{{\Sigma}}\cdot p= H\cdot p$. Therefore, $\g{p}_{\g{h}}=\g{p}_{\Sigma}$ and consequently $\g{g}_{\Sigma}=\g{h}$, since  $\g{g}_{\Sigma}$ and $\g{h}$ are of non-compact type.
\end{proof}

\section{Dynkin index and totally geodesic submanifolds}\label{sect:Dynkin}
In this section, we extend the definition of the Dynkin index of a simple subalgebra of a simple complex Lie algebra to certain classes of semisimple subalgebras of simple real Lie algebras, and we use it to characterize isometry classes of totally geodesic submanifolds.

Recall that we denote by~$\mathcal{B}_{\g{g}}$ the Killing form of a Lie algebra~$\g{g}$.
Let $\g{g}$ be a simple complex Lie algebra and let $\g{a}$ be a Cartan subalgebra of~$\g{g}$. Let $\Delta$ be the set of roots with respect to~$\g{a}$. It is shown in~\cite[Chapter~III, Theorem~4.2]{helgason} that the restriction of $\mathcal{B}_{\g{g}}$ to~$\g{a}$ is non-degenerate, and that we hence may identify each root $\alpha \in \Delta$ with a vector $H_\alpha \in \g{g}$ such that $\alpha(H) = \mathcal{B}_{\g{g}}(H,H_\alpha)$ for all~$H \in \g{a}$.
It follows from~\cite[Chapter~III, Theorem~4.4(i)]{helgason} that the numbers $q_\alpha := \mathcal{B}_{\g{g}}( H_\alpha , H_\alpha)$ are positive for all~$\alpha \in \Delta$. Let $q := \max \{q_\alpha : \alpha \in \Delta\}$ and define the bilinear form $Q_{\g{g}}$ on~$\g{g}$ by
\[
Q_{\g{g}} := \frac2q \mathcal{B}_{\g{g}},
\]
i.e.\ $Q_{\g{g}}$ is the multiple of the Killing form normalized by the condition that the square of the length of the longest root equals~$2$.
Let us recall from~\cite[\S2]{Dynkin} Dynkin's definition of the index of a simple subalgebra of a simple complex Lie algebra. Let $\g{h}$ and $\g{g}$ be simple complex Lie algebras and let $f \colon \g{h} \to \g{g}$ be a Lie algebra monomorphism.
By Schur's Lemma, the number~$\ind_D(f)$, given by
\begin{equation}\label{eq:Dynkinindex}
\ind_D(f) \cdot Q_{\g{h}}(X,Y)
=Q_{\g{g}}(f(X),f(Y))\quad\hbox{for~all~$X,Y \in \g{h}$},
\end{equation}
is well defined. It is called the \emph{Dynkin index} of the subalgebra~$f(\g{h})$ in~$\g{g}$. Dynkin proved in~\cite[Theorem~2.2]{Dynkin} that it is a positive integer.
If $\g{h} \subset \g{g}$ is a subalgebra of the complex Lie algebra~$\g{g}$, we will write $\ind_D(\g{h,\g{g}})$ or, $\ind_D(\g{h})$, when the embedding is clear from the context, for the Dynkin index of the inclusion map.

Now let us mention the multiplicative property of the Dynkin index, see~\cite[\S2, No.~7]{Dynkin}. Observe that given the following embeddings of complex simple Lie algebras $\g{h}_1\subset \g{h}_{2}\subset\g{g}$, then
\[ \ind_D(\g{h}_1, \g{g})=\ind_D(\g{h}_1, \g{h}_2)\cdot\ind_D(\g{h}_2, \g{g}).  \]

We would like to extend the definition of the Dynkin index to (semi-)simple subalgebras of simple real Lie algebras.
Let $\g{h} = \g{h}_1 \oplus \dots \oplus \g{h}_n$ be a semisimple subalgebra of the simple complex Lie algebra~$\g{g}$, where  $\g{h}_s$ is a simple ideal for every $s\in\{1,\ldots,n\}$. We define \[\ind_D(\g{h}) := ( \ind_D(\g{h}_1), \dots, \ind_D(\g{h}_n)).\]

For an absolutely simple real Lie algebra~$\g{g}$, we will define the Dynkin index of a semisimple subalgebra~$\g{h}$ as the Dynkin index of~$\g{h}_\C$ in~$\g{g}_\C$. This is well defined since $\g{h}_\C$ is semisimple, see the proof of Lemma~\ref{lemma:complexification}.
In order to define the Dynkin index also for a semisimple subalgebra of a simple real Lie algebra which is not absolutely simple, i.e.\ whose complexification is not simple, we prove the following.

\begin{lemma}\label{lemma:semisplcpx}
Let $\g{g}$ be a simple complex Lie algebra and let $\g{h}$ be a semisimple subalgebra of the realification~$\g{g}_\R$ of~$\g{g}$.
Then $\g{h}+i\g{h}$ is a semisimple subalgebra of~$\g{g}$.
\end{lemma}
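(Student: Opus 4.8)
The plan is to exhibit $\g{h}+i\g{h}$ as the image of the semisimple complex Lie algebra $\g{h}_\C$ under a homomorphism of complex Lie algebras, and then to invoke the standard fact that a homomorphic image of a semisimple Lie algebra is again semisimple. This bypasses any direct computation with the Killing form (a Cartan-criterion argument on $\g{h}+i\g{h}$ would also work, but it is more laborious).

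First I would record that $\g{h}+i\g{h}$ is a complex subalgebra of~$\g{g}$: since $\g{h}$ is a real subalgebra, the inclusions $[\g{h},\g{h}]\subset\g{h}$, $[\g{h},i\g{h}]=i[\g{h},\g{h}]\subset i\g{h}$, and $[i\g{h},i\g{h}]=-[\g{h},\g{h}]\subset\g{h}$ show that $\g{h}+i\g{h}$ is closed under the bracket, and it is plainly a complex subspace. (This is already noted in the proof of Lemma~\ref{lemma:maximalcomplexsemisimple}.) Next I would define $\phi\colon \g{h}_\C=\g{h}\otimes_\R\C\to\g{g}$ by $\phi(X\otimes z)=z\cdot X$, where on the right $z\cdot X$ denotes the complex scalar multiplication of~$\g{g}$. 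The assignment $(X,z)\mapsto z\cdot X$ is $\R$-bilinear, so $\phi$ is well defined, and it is $\C$-linear for the complex structure on~$\g{h}_\C$ coming from the second tensor factor. Using the $\C$-bilinearity of the bracket of~$\g{g}$, it is a homomorphism of complex Lie algebras:
\[ [\phi(X\otimes z),\phi(Y\otimes w)]=[zX,wY]=zw\,[X,Y]=\phi\bigl([X,Y]\otimes zw\bigr)=\phi\bigl([X\otimes z,Y\otimes w]\bigr). \]
Its image is exactly $\g{h}+i\g{h}$, since $\phi(X\otimes 1)=X$ and $\phi(Y\otimes i)=iY$, so that every $X+iY\in\g{h}+i\g{h}$ equals $\phi(X\otimes 1+Y\otimes i)$.

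Finally, since $\g{h}$ is semisimple, its complexification $\g{h}_\C$ is a semisimple complex Lie algebra (by the same reference used in the proof of Lemma~\ref{lemma:complexification}, namely \cite[\S2, Proposition~2(i)]{Onisbook}). Consequently $\g{h}+i\g{h}=\phi(\g{h}_\C)\cong\g{h}_\C/\ker\phi$ is a quotient of a semisimple Lie algebra; as every ideal of a semisimple Lie algebra is a sum of simple ideals, such a quotient is again semisimple (or zero). Hence $\g{h}+i\g{h}$ is a semisimple subalgebra of~$\g{g}$, as claimed.

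I do not expect a genuine obstacle here; the only conceptual point requiring care is the observation that $\g{h}+i\g{h}$ is not merely closed under the bracket but is in fact a homomorphic image of~$\g{h}_\C$, which is precisely what upgrades ``complex subalgebra'' to ``semisimple''. The degenerate case $\g{h}=0$ is vacuous, and otherwise $\phi$ is nonzero so the image is a nonzero semisimple algebra.
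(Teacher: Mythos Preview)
Your argument is correct. The map $\phi\colon\g{h}_\C\to\g{g}$ is indeed a surjective homomorphism of complex Lie algebras onto $\g{h}+i\g{h}$, and since quotients of semisimple Lie algebras are semisimple, the conclusion follows.

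This is genuinely different from the paper's proof. The paper argues constructively: it decomposes $\g{h}=\g{h}_1\oplus\dots\oplus\g{h}_n$ into simple ideals, observes that $\g{h}\cap i\g{h}$ is an ideal of~$\g{h}$, and shows that each simple factor~$\g{h}_j$ is either already a complex subalgebra (so $\g{h}_j+i\g{h}_j=\g{h}_j$) or a real form of the simple complex Lie algebra $\g{h}_j+i\g{h}_j$; the resulting direct-sum decomposition of $\g{h}+i\g{h}$ is then visibly semisimple. Your approach is shorter and more conceptual, avoiding any case analysis. The paper's approach, on the other hand, yields the explicit structure of $\g{h}+i\g{h}$ in terms of the simple ideals of~$\g{h}$, which is exactly the information needed immediately afterwards to make Definition~\ref{def:indD}\,ii) meaningful (one needs to know which simple factors of $\g{h}+i\g{h}$ correspond to which simple factors of~$\g{h}$). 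So while your proof establishes the lemma cleanly, the paper's proof is doing double duty as preparation for the Dynkin index.
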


\begin{proof}
Notice that $\g{h}+i\g{h}$ is a subalgebra of~$\g{g}$ and $\g{h}\cap i\g{h}$ is an ideal of~$\g{h}+i\g{h}$, see~\cite[Appendix to Chapter~1, Lemma 1.1]{Dynkin2}.
Hence $\g{h}\cap i\g{h}$ is also an ideal of the subalgebra $\g{h}$ of~$\g{g}_\R$.
Let $\g{h} = \g{h}_1 \oplus \dots \oplus \g{h}_n$, where the~$\g{h}_j$ are the simple ideals of~$\g{h}$.
Since $\g{h}_j$ is simple, we have either $\g{h}_j\cap i\g{h}_j=\g{h}_j$, and then $\g{h}_j+i \g{h}_j=\g{h}_j$ is simple, or $\g{h}\cap i\g{h} = 0$, in which case $\g{h}_j$ is a simple real form of the complex Lie algebra $\g{h}_j+i\g{h}_j$.
Note also that $i\g{h}_j \cap \g{h}_k = 0$ for $j \neq k$. Indeed, we have $[i\g{h}_j \cap \g{h}_k,\g{h}_k] \subset i [\g{h}_j,\g{h}_k] = 0$ for $j \neq k$.
Thus we may assume, by renumbering the~$\g{h}_j$, if necessary, that there is a $k \in \{0,\dots,n\}$ such that multiplication by~$i$ maps $\g{h}_j$ to itself if and only if $j < k$. It follows that
\[
\g{h}+i\g{h} = \g{h}_1 \oplus \dots \oplus \g{h}_{k-1} \oplus  (\g{h}_{k}+i\g{h}_{k}) \oplus \dots \oplus (\g{h}_n+i\g{h}_n).
\]
Hence $\g{h}+i\g{h}$ is semisimple.
\end{proof}

\begin{definition}\label{def:indD}
Let $\g{g}$ be a simple real Lie algebra and let $\g{h}$ be a semisimple subalgebra of~$\g{g}$.
\begin{enumerate}[i)]
  \item If $\g{g}$ is absolutely simple, then we define
  \[
  \ind_D(\g{h}, \g{g}) := \ind_D(\g{h}_\C , \g{g}_\C).
  \]
  \item If $\g{g} = \g{l}_\R$ is the realification of a simple complex Lie algebra~$\g{l}$ and $\g{h}$ is a complex subalgebra  or a real form of $\g{l}$, then we define
  \[
  \ind_D(\g{h}, \g{g}) := \ind_D(\g{h}+i\g{h} , \g{l}).
  \]
\end{enumerate}
Furthermore, let $\g{h}$ and $\g{h}'$ be two semisimple subalgebras of~$\g{g}$ for which the Dynkin index is defined. We say that $\g{h}$ and $\g{h}'$ are \emph{isometric} if there is a Lie algebra isomorphism $\varphi \colon \g{h} \to \g{h}'$ such that each simple ideal of~$\g{h}$ is mapped onto a simple ideal of~$\g{h}'$ of the same Dynkin index.
\end{definition}

\begin{remark}\label{rem:indD}
	Note that, with this definition, for a simple subalgebra~$\g{h}$ of an absolutely simple real Lie algebra~$\g{g}$ the Dynkin index of the subalgebra~$\g{h}_\C$ of~$\g{g}_\C$ is either a natural number (in case $\g{h}$ is absolutely simple) or, by the following lemma, a pair of natural equal numbers (in case $\g{h}$ is not absolutely simple).
\end{remark}

\begin{lemma}\label{lemma:eqDI}
Let $\g{g}$ be an absolutely simple real Lie algebra and let $\g{h}$ be a simple, but not absolutely simple subalgebra of~$\g{g}$. Then $\g{h}_\C = \g{h}_1 \oplus \g{h}_2$, where $\g{h}_1$ and $\g{h}_2$ are two isomorphic simple subalgebras of~$\g{g}_\C$ of equal Dynkin index.
\end{lemma}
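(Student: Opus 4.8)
The plan is to combine the standard description of the complexification of a realification with the complex conjugation of~$\g{g}_\C$ determined by the real form~$\g{g}$, and then to transfer the Dynkin index from one factor to the other by this conjugation.

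First, since $\g{h}$ is simple but not absolutely simple, it is the realification $\g{l}_\R$ of a simple complex Lie algebra~$\g{l}$; write $J$ for the complex structure of~$\g{l}$ (multiplication by~$i$) regarded as a real-linear endomorphism of~$\g{h}$. Extending $J$ complex-linearly to~$J_\C$ on $\g{h}_\C = \g{l}_\R \otimes_\R \C$ and using $J_\C^2 = -\Id$, I would split $\g{h}_\C = \g{h}_1 \oplus \g{h}_2$ into the $(+i)$- and $(-i)$-eigenspaces of~$J_\C$. The maps $x \mapsto \tfrac12(x - iJx)$ and $x \mapsto \tfrac12(x + iJx)$ are complex-linear Lie algebra isomorphisms from~$\g{l}$ onto~$\g{h}_1$ and from~$\bar{\g{l}}$ onto~$\g{h}_2$, respectively; since a complex semisimple Lie algebra is isomorphic to its conjugate, this gives $\g{h}_1 \cong \g{l} \cong \g{h}_2$. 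Both are simple ideals of~$\g{h}_\C$, hence in particular simple subalgebras of~$\g{g}_\C$, which yields the claimed decomposition.

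It remains to prove the equality of Dynkin indices, and for this I would introduce the conjugation~$\tau$ of~$\g{g}_\C$ with respect to~$\g{g}$, i.e. the conjugate-linear involutive automorphism with fixed point set~$\g{g}$. As $\tau$ fixes $\g{h} \subset \g{g}$ pointwise and $J$ preserves~$\g{h}$, a direct computation on elements $a + ib$ with $a,b \in \g{h}$ shows $\tau J_\C = J_\C \tau$. Because $\tau$ is conjugate-linear while $J_\C$ is complex-linear, this interchanges the two eigenspaces: $\tau(\g{h}_1) = \g{h}_2$, so $\tau$ restricts to a conjugate-linear isomorphism $\g{h}_1 \to \g{h}_2$.

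Finally I would use that a conjugate-linear automorphism conjugates the Killing form: from $\ad_{\tau X} = \tau\, \ad_X\, \tau^{-1}$ and $\tr(\tau A \tau^{-1}) = \overline{\tr A}$ one gets $\mathcal{B}_{\g{g}_\C}(\tau X, \tau Y) = \overline{\mathcal{B}_{\g{g}_\C}(X,Y)}$, and likewise for the restrictions to~$\g{h}_1$ and~$\g{h}_2$. Since $\g{h}_1 \cong \g{h}_2$, the normalizing constant~$q$ entering $Q_{\g{h}_j} = \tfrac{2}{q}\mathcal{B}_{\g{h}_j}$ is the same for both factors, so that $Q_{\g{h}_2}(\tau X, \tau Y) = \overline{Q_{\g{h}_1}(X,Y)}$ and $Q_{\g{g}_\C}(\tau X, \tau Y) = \overline{Q_{\g{g}_\C}(X,Y)}$ for $X,Y \in \g{h}_1$. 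Substituting $\tau X, \tau Y$ into the defining relation~\eqref{eq:Dynkinindex} for $\ind_D(\g{h}_2)$ and comparing with the relation for $\ind_D(\g{h}_1)$, and using that these indices are real while $Q_{\g{h}_1}$ is nondegenerate, I would conclude $\ind_D(\g{h}_1) = \ind_D(\g{h}_2)$. I expect the main obstacle to be the bookkeeping of the two distinct complex structures ($J$ on~$\g{l}$ versus the scalar~$i$ of the second complexification) together with the conjugate-linearity of~$\tau$: the crux is verifying that the normalized forms transform exactly by complex conjugation, which in turn relies on~$\g{h}_1$ and~$\g{h}_2$ being abstractly isomorphic so that their normalizing constants agree.
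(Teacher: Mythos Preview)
Your argument is correct and follows essentially the same strategy as the paper: decompose $\g{h}_\C$ into two isomorphic simple ideals, use the complex conjugation~$\tau$ of~$\g{g}_\C$ with respect to~$\g{g}$ to interchange them, and exploit that $\tau$ conjugates the Killing (hence the normalized) forms to conclude $\ind_D(\g{h}_1)=\overline{\ind_D(\g{h}_2)}=\ind_D(\g{h}_2)$. The only difference is in showing that $\tau$ swaps the two factors: you do this directly via the eigenspace description of~$\g{h}_1,\g{h}_2$ and the commutation $\tau J_\C=J_\C\tau$, whereas the paper argues by contradiction (if $\tau$ preserved each~$\g{h}_j$, its fixed set would be a direct sum of two proper real forms, contradicting the simplicity of~$\g{h}$); your route is slightly more explicit and avoids the paper's preliminary appeal to a canonical embedding via Karpelevich--Mostow.
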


\begin{proof}
We may assume that $\g{h}$ is canonically embedded with respect to a Cartan decomposition $\g{g} = \g{k} \oplus \g{p}$ by Theorem~\ref{th:karpelevichalg}. Let $\theta$ be the corresponding Cartan involution.
Since $\g{h}$ is simple, but not absolutely simple, it is isomorphic to the realification of a simple complex Lie algebra and since $\g{h}$ is canonically embedded with respect to the above Cartan decomposition, it follows that $\g{h} \cap \g{k}$ is a compact real form of this simple complex Lie algebra. In particular, $\theta$, restricted to~$\g{h}$, is a non-trivial involutive automorphism of~$\g{h}$.

It is well known that the complexification of~$\g{f}_\R$, where $\g{f}$ is a complex Lie algebra, is isomorphic to~$\g{f} \oplus \bar{\g{f}}$, where $\bar{\g{f}}$ denotes the complex conjugate Lie algebra of $\g{f}$, see e.g.~\cite[\S2, Proposition~3]{Onisbook}. Note that the complex Lie algebras that have a real form are isomorphic to their complex conjugates via an antilinear map. This shows that $\g{h}_\C = \g{h}_1 \oplus \g{h}_2$, where $\g{h}_1$ and $\g{h}_2$ are isomorphic.

Let $\tau \colon \g{g}_\C \to \g{g}_\C$ be the map defined by $\tau(X+iY)=X-iY$ for $X,Y \in \g{g}$. It is straightforward to check that $\tau$ is an automorphism of~$\g{g}_\R$.
The map $\tau$ obviously leaves~$\g{h}_\C$ invariant and it acts on~$\g{h}_\C$ as an automorphism of the real Lie algebra~$(\g{h}_\C)_\R$. Since $\g{h}_1$ and~$\g{h}_2$ are isomorphic simple ideals of~$(\g{h}_\C)_\R$, we have either $\tau(\g{h}_1)=\g{h}_{1}$ or $\tau(\g{h}_1)=\g{h}_{2}$. Assume we are in the former case. Then we also have $\tau(\g{h}_2)=\g{h}_{2}$. The fixed point set of the involution~$\tau$ on~$\g{h}_\C$ is the direct sum~$\g{k}_1 \oplus \g{k}_2$, where $\g{k}_j$ is a proper subalgebra of~$\g{h}_j$ for $j=1,2$.
However, the fixed point set of the action of~$\tau$ on~$\g{h}_\C$ coincides with the simple Lie algebra~$\g{h}$ and we have arrived at a contradiction.

We have shown that $\tau(\g{h}_1)=\g{h}_{2}$.
Since $\g{h}_1$ is simple, there are vectors $X,Y \in \g{h}_1$ such that $\mathcal{B}_{\g{h}_1}(X,Y)\neq0$ and we have by~\cite[\S~2, Proposition~2(ii)]{Onisbook}
\[
\ind_D(\g{h}_1,\g{g})
= \frac{Q_{\g{g}}(X,Y)}{Q_{\g{h}_1}(X,Y)}
= \frac{\overline{Q_{\g{g}}(\tau(X),\tau(Y))}}{\;\;\overline{Q_{\g{h}_2}(\tau(X),\tau(Y))}\;\;}
= \overline{\ind_D(\g{h}_2,\g{g})}.
\]
Since $\ind_D(\g{h}_2,\g{g})$ is a natural number, it follows that the subalgebras $\g{h}_1$ and $\g{h}_2$ have the same Dynkin index.
\end{proof}

Hence, we have defined the Dynkin index for every semisimple subalgebra~$\g{h}$ of an absolutely simple real Lie algebra~$\g{g}$. Also, we have defined the Dynkin index for every semisimple complex subalgebra $\g{h}$ of the realification of a complex simple Lie algebra $\g{g}$ and for its real forms. In both cases, we denote it by $\ind_D(\g{h},\g{g})$ or, $\ind_D(\g{h})$, when the embedding is clear from the context.
It immediately follows from this definition that the Dynkin index is one if $\g{h}$ is a real form of the simple complex Lie algebra~$\g{g}$. Indeed, the following result shows that isometric real forms of $\g{g}$ induce congruent, and hence isometric, totally geodesic submanifolds of $M$.
\begin{lemma}
	\label{lemma:congrealforms}
	Let $M=G/K$ be a symmetric space, where $G$ is a simple complex Lie group. Let $\g{h}$ and $\g{h}'$ be isomorphic real forms of a complex simple Lie algebra $\g{g}$. Then, the totally geodesic orbits of $H$ and $H'$ in $M$ are all congruent in $M$, where $H$ and $H'$ are the connected subgroups of $G$ with Lie algebras $\g{h}$ and $\g{h}'$, respectively.
\end{lemma}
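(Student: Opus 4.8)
The plan is to use Proposition~\ref{prop:congruency} to reduce the statement to producing a single isometry of~$M$ that carries one totally geodesic $H$-orbit onto a totally geodesic $H'$-orbit; this isometry will be manufactured by complexifying the given abstract isomorphism between the two real forms.

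Let $\g{g}$ denote the Lie algebra of~$G$. Since $G$ is a simple complex Lie group, $\g{g}=\g{l}_\R$ is the realification of a simple complex Lie algebra~$\g{l}$, and $\g{h},\g{h}'$ are real forms of~$\g{l}$, so that $\g{l}=\g{h}_\C=\g{h}'_\C$. As real forms are semisimple, Theorem~\ref{th:karpelevichgeo} guarantees that both $H$ and $H'$ possess totally geodesic orbits, and by Proposition~\ref{prop:congruency} all totally geodesic $H$-orbits are mutually congruent and all totally geodesic $H'$-orbits are mutually congruent. Hence it is enough to exhibit one totally geodesic $H$-orbit congruent to one totally geodesic $H'$-orbit. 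To this end, let $\varphi\colon\g{h}\to\g{h}'$ be the given isomorphism and let $\Phi:=\varphi\otimes_\R\C$ be its complex-linear extension; then $\Phi$ is a $\C$-linear automorphism of~$\g{l}$ with $\Phi(\g{h})=\g{h}'$, and in particular $\Phi$ is an automorphism of the real Lie algebra~$\g{g}$.

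Next I would realize $\Phi$ as the linear part of an isometry of~$M$. Fix a Cartan decomposition $\g{g}=\g{k}\oplus\g{p}$, where $\g{k}$ is a compact real form of~$\g{l}$ and $\g{p}=i\g{k}$, with associated Cartan involution~$\theta$. The subgroup of $\Aut_\C(\g{l})$ preserving~$\g{k}$ is a maximal compact subgroup and therefore meets every connected component of the (finite-component) group $\Aut_\C(\g{l})$; since the identity component is $\Int(\g{l})=\Ad(G)$, we may write $\Phi=\Ad(g)\circ\psi$ with $g\in G$ and with $\psi\in\Aut_\C(\g{l})$ preserving~$\g{k}$. Being $\C$-linear, $\psi$ also preserves $\g{p}=i\g{k}$, hence commutes with~$\theta$, so $\psi$ is the differential of an isometry~$\phi_\psi$ fixing~$o$. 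Consequently $\phi:=g\circ\phi_\psi\in\Isom(M)$ induces $\Phi$ on~$\g{g}$ by conjugation.

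Finally, since $G=\Isom^0(M)$ is normal in $\Isom(M)$, conjugation by~$\phi$ preserves~$G$ and sends~$H$ to the connected subgroup with Lie algebra $\Phi(\g{h})=\g{h}'$, that is $\phi H\phi^{-1}=H'$. Choosing any totally geodesic orbit $H\cdot p$, its isometric image $\phi(H\cdot p)=H'\cdot\phi(p)$ is then a totally geodesic $H'$-orbit congruent to $H\cdot p$, and Proposition~\ref{prop:congruency} upgrades this to congruence of all totally geodesic $H$-orbits with all totally geodesic $H'$-orbits. The hard part will be the realization step: one must ensure that $\Phi$, which may be an \emph{outer} automorphism of~$\g{l}$, can be represented by an isometry of~$M$. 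The argument above handles this by choosing the outer representative~$\psi$ inside the stabilizer of the compact real form~$\g{k}$, so that it commutes with~$\theta$ and descends to an isometry fixing~$o$; this rests on the standard identification of $\Isom(M)$ with $\Aut(\g{g})$ and on a maximal compact subgroup meeting every component.
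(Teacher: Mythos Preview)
Your proof is correct and follows essentially the same strategy as the paper's: complexify the given isomorphism to an automorphism~$\Phi$ of~$\g{g}$, adjust it by an inner automorphism so that the result preserves the Cartan decomposition, lift to an isometry of~$M$, and conclude via Proposition~\ref{prop:congruency}. The only cosmetic difference is that you phrase the adjustment step via the maximal compact subgroup of $\Aut_\C(\g{l})$ meeting every component, whereas the paper invokes conjugacy of Cartan involutions under $\Int(\g{g})$ directly and then appeals to \cite[Corollary~2.3.14]{wolf} to extend the resulting curvature-preserving linear isometry of~$\g{p}$ to an isometry of~$M$.
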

\begin{proof}
	Let $\g{h}$ and $\g{h'}$ be isomorphic real forms in $\g{g}$.  Then, there is some Lie algebra isomorphism $f\colon \g{h}\rightarrow\g{h'}$.  By complexifying, this map extends to a Lie algebra automorphism $\widetilde{f}$ of $\g{g}$.
	Let us fix some Cartan decomposition $\g{g}=\g{k}\oplus\g{p}$. We can assume without loss of generality that $H\cdot o$ is a totally geodesic submanifold in $M$. This implies that there is some Cartan decomposition $\g{h}=\g{p}_{\g{h}}\oplus\g{k}_{\g{h}}$ such that $\g{p}_{\g{h}}\subset\g{p}$ and $\g{k}_{\g{h}}\subset\g{k}$. By Theorem \ref{th:karpelevichalg}, there is some Cartan decomposition $\g{p}_{\g{h}'}\oplus\g{k}_{\g{h'}}=\g{h'}$ such that $\widetilde{f}(\g{p}_{\g{h}})=\g{p}_{\g{h'}}$ and $\widetilde{f}(\g{k}_{\g{h}})=\g{k}_{\g{h'}}$.
	
	Furthermore, two Cartan decompositions of $\g{g}$ are conjugate in $\Int(\g{g})$. Hence there is some $g\in G$ such that $\varphi:=\Ad(g)\circ\widetilde{f}\in\Aut(\g{g})$, $\varphi(\g{p})=\g{p}$ and $\varphi(\g{h})\subset\g{g}$ is a real form of $\g{g}$ conjugate to $\g{h'}$ in $\g{g}$.
	Furthermore, $\varphi$ preserves the curvature tensor of $M$ at $o$ since this is given by Lie brackets. Hence, $\varphi$ is a linear isometry of $\g{p}$ which preserves sectional curvature at $o$. Thus, by \cite[Corollary 2.3.14]{wolf}, $\varphi$ extends to an isometry $k\in \Isom(M)$ which fixes $o\in M$, since it leaves $\g{p}$ invariant. Thus, we have that $k(H\cdot o)=g H' g^{-1}\cdot q$, for certain $q\in M$, which implies that there is a totally geodesic orbit of $H$ which is congruent to a totally geodesic orbit of $H'$. Consequently, by Proposition \ref{prop:congruency}, the totally geodesic orbits of $H$ and $H'$ are all congruent in $M$.
\end{proof}

We have defined the Dynkin indices of certain semisimple subalgebras of simple real Lie algebras in such a way that isometry classes of subalgebras of the isometry algebra of an irreducible symmetric space of non-compact type correspond to isometry classes of totally geodesic submanifolds. This is the content of the following theorem.

\begin{theorem}\label{th:isom}
Let $M=G/K$ be an irreducible symmetric space of non-compact type. Let $\Sigma_1,\Sigma_2$ be two semisimple totally geodesic submanifolds containing $o \in M$. For $j=1,2$ let $\g{p}_j = T_o\Sigma_j$ and let $\g{g}_{\Sigma_j} = [\g{p}_j,\g{p}_j] \oplus \g{p}_j\subset\g{g}$ and  assume that one of the following holds:
\begin{enumerate}[i)]
	\item $\g{g}$ is absolutely simple.
	\item $\g{g}$ is complex and $\g{g}_{\Sigma_j}$ is  a complex subalgebra or a real form of $\g{g}$ for each $j\in\{1,2\}$.
\end{enumerate}
Then $\Sigma_1$ and $\Sigma_2$ are isometric  if and only if $\g{g}_{\Sigma_1}$ and $\g{g}_{\Sigma_2}$ are isometric subalgebras of~$\g{g}$.
\end{theorem}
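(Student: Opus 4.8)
The plan is to translate isometry of the submanifolds into linear algebra on the $\g{p}$-parts and then to show that the only information carried by the induced metric beyond the Lie algebra structure is recorded, on each simple ideal, by the Dynkin index. Since $\Sigma_j$ is a semisimple totally geodesic submanifold through $o$, it is a symmetric space of non-compact type, hence simply connected, and $\g{g}_{\Sigma_j}=[\g{p}_j,\g{p}_j]\oplus\g{p}_j$ is of non-compact type and canonically embedded, with $\g{g}_{\Sigma_j}\cap\g{k}=[\g{p}_j,\g{p}_j]$ and $\g{g}_{\Sigma_j}\cap\g{p}=\g{p}_j$. By Cartan's theorem on simply connected symmetric spaces, $\Sigma_1$ and $\Sigma_2$ are isometric if and only if there is a linear isometry $F\colon(\g{p}_1,\mathcal{B}_{\g{g}}|_{\g{p}_1})\to(\g{p}_2,\mathcal{B}_{\g{g}}|_{\g{p}_2})$ preserving the curvature tensor $R_o(X,Y)Z=[[X,Y],Z]$; such an $F$ is exactly a Lie triple system isomorphism, and it extends to a Lie algebra isomorphism $\widetilde F\colon\g{g}_{\Sigma_1}\to\g{g}_{\Sigma_2}$ with $\widetilde F(\g{k}_{\Sigma_1})=\g{k}_{\Sigma_2}$ and $\widetilde F|_{\g{p}_1}=F$. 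Conversely, any Lie algebra isomorphism respecting the Cartan decompositions restricts to a curvature-preserving linear map on the $\g{p}$-parts. Everything then comes down to comparing, on each simple ideal, the restricted metric with the metric transported by a Lie algebra isomorphism.

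The key step is a metric lemma. Let $\g{s}$ be a simple ideal of $\g{g}_{\Sigma_j}$ and put $\g{p}_{\g{s}}=\g{s}\cap\g{p}$. Distinct simple ideals are $\mathcal{B}_{\g{g}}$-orthogonal, since for $\g{s}\neq\g{s}'$ one has $\mathcal{B}_{\g{g}}(\g{s},\g{s}')=\mathcal{B}_{\g{g}}([\g{s},\g{s}],\g{s}')=\mathcal{B}_{\g{g}}(\g{s},[\g{s},\g{s}'])=0$, so the metric splits orthogonally over the ideals and it suffices to understand each $\mathcal{B}_{\g{g}}|_{\g{p}_{\g{s}}}$. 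Now $\mathcal{B}_{\g{g}}|_{\g{s}}$ is a symmetric $\ad(\g{s})$-invariant form: if $\g{s}$ is absolutely simple this forces $\mathcal{B}_{\g{g}}|_{\g{s}}=c_{\g{s}}\mathcal{B}_{\g{s}}$ for some $c_{\g{s}}>0$, while if $\g{s}=\g{l}_\R$ is the realification of a complex simple $\g{l}$ the invariant forms are spanned by $\Re\mathcal{B}_{\g{l}}$ and $\Im\mathcal{B}_{\g{l}}$; here the facts that $\g{g}$ is of non-compact type and $\g{s}$ is canonically embedded (so $\g{s}\cap\g{k}$ is a compact real form and $\g{p}_{\g{s}}=i(\g{s}\cap\g{k})$, with $\g{k}\perp\g{p}$) force the $\Im\mathcal{B}_{\g{l}}$-component to vanish, again giving $\mathcal{B}_{\g{g}}|_{\g{s}}=c_{\g{s}}\mathcal{B}_{\g{s}}$. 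Complexifying this identity and inserting it into the defining relation~\eqref{eq:Dynkinindex} of the Dynkin index — in case~\textit{i}) via $\g{s}_\C\subset\g{g}_\C$, using Lemma~\ref{lemma:eqDI} when $\g{s}$ is not absolutely simple, and in case~\textit{ii}) via $\g{s}+i\g{s}\subset\g{l}$ — I would obtain a formula of the shape $c_{\g{s}}=\ind_D(\g{s})\cdot\kappa(\g{s})$, where $\kappa(\g{s})>0$ depends only on the isomorphism type of $\g{s}$ and on the fixed ambient $\g{g}$. Consequently, for two isomorphic simple ideals $\g{s}\cong\g{s}'$ inside the same $\g{g}$, one has $c_{\g{s}}=c_{\g{s}'}$ if and only if $\ind_D(\g{s})=\ind_D(\g{s}')$.

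With the lemma in hand I would finish both implications. For the backward direction, suppose $\varphi\colon\g{g}_{\Sigma_1}\to\g{g}_{\Sigma_2}$ is a Dynkin-index-preserving Lie algebra isomorphism. Then $\varphi\,\theta_1\,\varphi^{-1}$ is a Cartan involution of $\g{g}_{\Sigma_2}$, so after composing $\varphi$ with a suitable inner automorphism $\psi$ of $\g{g}_{\Sigma_2}$ — which fixes each simple ideal setwise and hence preserves all Dynkin indices — I may assume $\varphi\,\theta_1\,\varphi^{-1}=\theta_2|_{\g{g}_{\Sigma_2}}$, so that $\varphi(\g{k}_{\Sigma_1})=\g{k}_{\Sigma_2}$ and $\varphi(\g{p}_1)=\g{p}_2$. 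Writing $\g{g}_{\Sigma_1}=\bigoplus_s\g{s}$ and $\g{s}'=\varphi(\g{s})$, the map $\varphi|_{\g{s}}$ pulls $\mathcal{B}_{\g{s}'}$ back to $\mathcal{B}_{\g{s}}$; since $\g{s}\cong\g{s}'$ and $\ind_D(\g{s})=\ind_D(\g{s}')$, the lemma gives $c_{\g{s}}=c_{\g{s}'}$, whence $\varphi|_{\g{p}_{\g{s}}}$ is an isometry onto $\g{p}_{\g{s}'}$. By orthogonality, $F:=\varphi|_{\g{p}_1}$ is a curvature-preserving linear isometry, so $\Sigma_1$ and $\Sigma_2$ are isometric. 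For the forward direction, an isometry yields a curvature-preserving linear isometry $F$ which extends to $\widetilde F\colon\g{g}_{\Sigma_1}\to\g{g}_{\Sigma_2}$ permuting simple ideals; for $\g{s}'=\widetilde F(\g{s})$ the map $F|_{\g{p}_{\g{s}}}\colon\g{p}_{\g{s}}\to\g{p}_{\g{s}'}$ is an isometry, and since $\mathcal{B}_{\g{g}}(FX,FY)=c_{\g{s}'}\mathcal{B}_{\g{s}'}(\widetilde FX,\widetilde FY)=c_{\g{s}'}\mathcal{B}_{\g{s}}(X,Y)$ equals $\mathcal{B}_{\g{g}}(X,Y)=c_{\g{s}}\mathcal{B}_{\g{s}}(X,Y)$ for $X,Y\in\g{p}_{\g{s}}$, we get $c_{\g{s}}=c_{\g{s}'}$ and hence, by the lemma, $\ind_D(\g{s})=\ind_D(\g{s}')$; thus $\widetilde F$ exhibits $\g{g}_{\Sigma_1}$ and $\g{g}_{\Sigma_2}$ as isometric subalgebras.

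The hard part will be the metric lemma, specifically the passage $c_{\g{s}}=\ind_D(\g{s})\cdot\kappa(\g{s})$ in the non-absolutely-simple situations: one must justify carefully that $\mathcal{B}_{\g{g}}|_{\g{s}}$ really is a \emph{real} multiple of $\mathcal{B}_{\g{s}}$ — the vanishing of the imaginary component forced by the non-compact-type orthogonality $\g{k}\perp\g{p}$ — and then match this real scalar against the complex normalizations $Q_{\g{s}_\C}$ and $Q_{\g{g}_\C}$ built into the definition of $\ind_D$, keeping track of the pair structure of the index supplied by Lemma~\ref{lemma:eqDI}. The mixed configurations in case~\textit{ii}) (one of the $\g{g}_{\Sigma_j}$ a complex subalgebra and the other a real form) are handled at once, since then the subalgebras, and the proper submanifolds, differ in dimension, so both sides of the equivalence fail; moreover, isomorphic real forms are already covered by Lemma~\ref{lemma:congrealforms}.
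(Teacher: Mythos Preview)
Your proof is correct and follows essentially the same route as the paper: both arguments boil down to showing that on each simple ideal the restriction $\mathcal{B}_{\g{g}}|_{\g{s}}$ is a scalar multiple $c_{\g{s}}\mathcal{B}_{\g{s}}$ with $c_{\g{s}}$ determined by the Dynkin index, the paper merely reducing to the simple case at the outset instead of carrying the orthogonal decomposition over the simple ideals through as you do. One small imprecision: in the mixed configuration of case~\textit{ii}) your claim that the two sides ``differ in dimension'' is not justified in general; the cleaner observation is that a real form of the complex simple~$\g{g}$ is absolutely simple whereas every simple ideal of a complex semisimple subalgebra is not, so $\g{g}_{\Sigma_1}$ and $\g{g}_{\Sigma_2}$ cannot be isomorphic as real Lie algebras and both sides of the equivalence fail for that reason.
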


\begin{proof}
It suffices to prove the statement in the case when the $\g{g}_{\Sigma_j}$, $j=1,2$, are simple.
Let us consider the Cartan decomposition~$\g{g} = \g{k} \oplus \g{p}$.
Since any two Cartan involutions of~$\g{g}$ differ by an inner automorphism of $\g{g}$, we may assume by  Theorem~\ref{th:karpelevichalg} that $\g{g}_{\Sigma_1}$ and $\g{g}_{\Sigma_2}$ are both canonically embedded into~$\g{g}$ with respect to the  Cartan decomposition~$\g{g} = \g{k} \oplus \g{p}$.
Then we have $\Sigma_j = \exp_o(\g{g}_{\Sigma_j}\cap \g{p})$, where $\exp_o$ denotes the Riemannian exponential map of~$M$ at the point~$o$.

If we assume that $\Sigma_1$ and $\Sigma_2$ are isometric, it follows that there is an isomorphism
$\varphi \colon \g{g}_{\Sigma_1} \to \g{g}_{\Sigma_2}$. Conversely, if $\g{g}_{\Sigma_1}$ and $\g{g}_{\Sigma_2}$ are isometric subalgebras of~$\g{g}$, they are isomorphic by definition.

Since $M$, $\Sigma_1$ and $\Sigma_2$ are irreducible symmetric spaces, their invariant Riemannian metrics are unique up to scaling by some constant factor. Therefore, we may assume that $M$ is endowed with the $G$-invariant metric induced by the Killing form of~$\g{g}$ and we know that the invariant Riemannian metrics induced on~$\Sigma_j$ are obtained from the metrics induced by the Killing form of~$\g{g}_{\Sigma_j}$  by applying a constant scaling factor~$c_j$ to this metric.
Thus it remains to show that $c_1=c_2$ if and only if the Dynkin indices of the subalgebras $\g{g}_{\Sigma_1}$ and $\g{g}_{\Sigma_2}$ of~$\g{g}$ are equal. To prove this, we consider separately the two cases where the Lie algebra~$\g{g}$ is absolutely simple and where it is not absolutely simple, since the Dynkin indices of subalgebras are defined differently in these two cases.
Note that the Killing form of a real form of a complex Lie algebra is given by restricting the Killing form of its complexification, see~\cite[\S2, Proposition~2]{Onisbook}. On the other hand, the Killing form of the realification~$\g{h}_\R$ of a complex Lie algebra~$\g{h}$ is given by
\[
\mathcal{B}_{\g{h}_\R}(X,Y) = 2 \Re (\mathcal{B}_{\g{h}}(X,Y)).
\]
First assume that $\g{g}$ is absolutely simple; then the Dynkin index of~$\g{g}_{\Sigma_j}$ is defined as the Dynkin index of the subalgebra~$(\g{g}_{\Sigma_j})_\C$ in~$\g{g}_\C$. In this case, the Lie algebras $\g{g}$, $\g{g}_{\Sigma_1}$, $\g{g}_{\Sigma_2}$ are real forms of the Lie algebras $\g{g}_\C$, $(\g{g}_{\Sigma_1})_\C$, $(\g{g}_{\Sigma_2})_\C$ and hence their Killing forms are given by restricting the Killing forms of their complexifications.
When $\g{g}_{\Sigma_1}$ and $\g{g}_{\Sigma_2}$ have simple complexifications, it is immediately clear that $\ind_{D}(\g{g}_{\Sigma_1})=\ind_{D}(\g{g}_{\Sigma_2})$ if and only if $c_1=c_2$.
In case the complexifications are not simple, we know by Lemma~\ref{lemma:eqDI} that they both are a direct sum of two isometric subalgebras of~$\g{g}_\C$ and it follows that $c_1=c_2$ if and only if their Dynkin indices agree.

Now assume that $\g{g}$ is not absolutely simple, i.e.\ it is the realification of a simple complex Lie algebra. In this case, the Dynkin index of~$\g{g}_{\Sigma_j}$ is defined as the Dynkin index of the subalgebra $\g{g}_{\Sigma_j} + i \g{g}_{\Sigma_j}$ of~$\g{g}$, viewed as a complex Lie algebra.
Now,  by hypothesis, the Lie algebras $\g{g}_{\Sigma_1}\simeq\g{g}_{\Sigma_2}$ is either a complex subalgebra or a real form. In the first case,
$\g{g}_{\Sigma_j}$ is a simple subalgebra of~$\g{g}$, viewed as a complex Lie algebra and and we conclude again that $c_1=c_2$ if and only the $\ind_D(\g{g}_{\Sigma_1}) = \ind_D(\g{g}_{\Sigma_2})$. In the second case, the result follows by Lemma~\ref{lemma:congrealforms}.
\end{proof}
\begin{definition}\label{def:Ltilde}
	Let $\g{g}$ be the Lie algebra of the isometry group of an irreducible symmetric space of non-compact type.  We define $\mathcal{L}(\g{g})$ as the set of isometry classes of maximal subalgebras of non-compact type of~$\g{g}$.
\end{definition}

Notice that this set is well defined since maximal subalgebras of non-compact type of $\g{g}$, when $\g{g}$ is complex, are either complex subalgebras or real forms of $\g{g}$ by Lemma~\ref{lemma:maximalcomplexsemisimple}.
We denote the isometry classes of subalgebras by the pairs $(\g{h},\ind_D(\g{h}))$, where the first entry denotes the isomorphism class of the subalgebra and the second entry is its Dynkin index.

\begin{corollary}
	\label{cor:correspondence}
	Let $\g{g}$ be the Lie algebra of the isometry group of an irreducible symmetric space of non-compact type~$M$.
	The set $\mathcal{L}(\g{g})$ is in one-to-one correspondence with the set of isometry classes of maximal semisimple totally geodesic submanifolds in~$M$.
\end{corollary}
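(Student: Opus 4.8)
The plan is to assemble the correspondence from the two main results already established, namely the Correspondence Theorem (Theorem~\ref{th:correspondence}) and the isometry criterion (Theorem~\ref{th:isom}), together with the congruence results for real forms. First I would recall that by Theorem~\ref{th:correspondence} there is a one-to-one correspondence between \emph{congruence} classes of maximal semisimple totally geodesic submanifolds of~$M$ and \emph{conjugacy} classes (under $\Aut(\g{g})$, or equivalently under $\Ad(G)$ and Cartan-involution adjustment) of subalgebras which are maximal among subalgebras of non-compact type of~$\g{g}$. The remaining task is to pass from conjugacy classes of such subalgebras to their isometry classes as recorded by the pairs $(\g{h},\ind_D(\g{h}))$, and to match this with isometry classes of the corresponding totally geodesic submanifolds.

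The key steps, in order, are as follows. I would first invoke Theorem~\ref{th:correspondence} to associate to each maximal subalgebra $\g{h}$ of non-compact type a maximal semisimple totally geodesic submanifold $\Sigma = H \cdot p$, and conversely to recover from each maximal semisimple $\Sigma$ its algebra $\g{g}_\Sigma$, which is maximal among subalgebras of non-compact type. This gives a surjection from subalgebras to submanifolds that descends to congruence/conjugacy classes. Second, I would apply Theorem~\ref{th:isom} to translate isometry of the submanifolds into isometry of the subalgebras: two maximal semisimple totally geodesic submanifolds $\Sigma_1,\Sigma_2$ are isometric if and only if $\g{g}_{\Sigma_1}$ and $\g{g}_{\Sigma_2}$ are isometric subalgebras in the sense of Definition~\ref{def:indD}, i.e.\ they agree as pairs $(\g{h},\ind_D(\g{h}))$. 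Here I must check that the hypotheses of Theorem~\ref{th:isom} are always met: when $\g{g}$ is absolutely simple, case~i) applies directly; when $\g{g}$ is complex (not absolutely simple), I use Lemma~\ref{lemma:maximalcomplexsemisimple} to observe that every maximal subalgebra of non-compact type is either a complex subalgebra or a non-compact real form of~$\g{g}$, so case~ii) of Theorem~\ref{th:isom} applies, as already noted after Definition~\ref{def:Ltilde}.

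The main obstacle, and the step requiring the most care, is verifying that isometry of subalgebras as abstract pairs $(\g{h},\ind_D(\g{h}))$ coincides with the equivalence induced on submanifolds — in particular that the Dynkin index is a \emph{complete} invariant at this level. Here I would lean on Theorem~\ref{th:isom}, which already shows that the induced metric on $\Sigma_j$ is obtained from the Killing form of $\g{g}_{\Sigma_j}$ by a scaling factor $c_j$, and that $c_1 = c_2$ if and only if the Dynkin indices agree; since $M$ and the $\Sigma_j$ are irreducible, their invariant metrics are unique up to scaling, so the isometry type of $\Sigma_j$ is determined precisely by the isomorphism type of $\g{g}_{\Sigma_j}$ together with this scaling, hence by the pair $(\g{h},\ind_D(\g{h}))$. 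The one subtle point is the case of real forms when $\g{g}$ is complex: there the Dynkin index is identically $1$, so it carries no distinguishing information, but Lemma~\ref{lemma:congrealforms} guarantees that isomorphic real forms yield congruent orbits, so the isomorphism type alone suffices. Combining the injectivity and surjectivity obtained from these two theorems yields the bijection between $\mathcal{L}(\g{g})$ and isometry classes of maximal semisimple totally geodesic submanifolds of~$M$, completing the proof.
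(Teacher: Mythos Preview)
Your proposal is correct and follows essentially the same approach as the paper, which simply states that the result follows from the Correspondence Theorem~\ref{th:correspondence} and Theorem~\ref{th:isom}. You have merely expanded the details that the paper leaves implicit, including the verification (via Lemma~\ref{lemma:maximalcomplexsemisimple}) that the hypotheses of Theorem~\ref{th:isom} are met in the complex case, which the paper also notes just after Definition~\ref{def:Ltilde}.
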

\begin{proof}
	This follows from the Correspondence Theorem~\ref{th:correspondence} and Proposition~\ref{th:isom}.
\end{proof}

\section{Totally geodesic submanifolds in\\ exceptional symmetric spaces of type~III}
\label{sect:totgeodreal}

In this section, we will achieve the classification in the case where $\g{g}$ is the Lie algebra of the isometry group of an exceptional symmetric space of type III.
To the best of our knowledge, so far no complete classification of maximal subalgebras of non-compact type is available in the literature. There are, however, the articles of Komrakov~\cite{Komrakov} and de Graaf and Marrani~\cite{Degraaf}, where lists are given, which, when combined, provide all maximal subalgebras of non-compact type of a real semisimple exceptional Lie algebra~$\g{g}$. Indeed, on the one hand, the more recent paper of de Graaf and Marrani~\cite{Degraaf}, for absolutely simple real Lie algebras~$\g{g}$ of rank less or equal than~$8$, complete lists are given of maximal reductive subalgebras~$\g{h}$ whose complexification~$\g{h}_\C$ is a maximal reductive subalgebra of~$\g{g}_\C$.
On the other hand, in the article of Komrakov~\cite{Komrakov}, all maximal subalgebras~$\g{h}$ of non-compact simple real Lie algebras are given for the case when the complexification~$\g{h}_\C$ of~$\g{h}$ is not a maximal subalgebra of~$\g{g}_\C$.

	We consider ${\widetilde{\mathcal{L}}}(\g{g})$, the result of joining these two lists, deleting the compact ideals of each subalgebra in the union of these lists, and adding the Dynkin index of the resulting subalgebras, which we take from~\cite[Tables~16, 17, 18, 19, 20, 25]{Dynkin}.
For the absolutely simple exceptional real Lie algebras, the set~$\widetilde{\mathcal{L}}(\g{g})$ is given by Table~\ref{table:tildeL(g)}.

\begin{proposition}
	\label{prop:noncompactsubalgebras}
	Let $\g{g}$ be the Lie algebra of the isometry group of an irreducible exceptional symmetric space of non-compact type. Then, if $\g{g}$ is absolutely simple, $\mathcal{L}(\g{g})\subset \widetilde{\mathcal{L}}(\g{g})$.
\end{proposition}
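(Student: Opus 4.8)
The plan is to show that $\mathcal{L}(\g{g}) \subset \widetilde{\mathcal{L}}(\g{g})$ by proving that every maximal subalgebra of non-compact type of $\g{g}$ occurs in the combined list from which $\widetilde{\mathcal{L}}(\g{g})$ was built. Let $\g{h} \subset \g{g}$ be a subalgebra which is maximal among subalgebras of non-compact type, so that $[\g{h}\text{'s class}]$ represents an element of $\mathcal{L}(\g{g})$. Since $\g{g}$ is absolutely simple, I first want to pass to the reductive setting: by the Karpelevich--Mostow Theorem~\ref{th:karpelevichmostow}, after possibly conjugating I may assume $\g{h}$ is canonically embedded with respect to a Cartan decomposition $\g{g} = \g{k} \oplus \g{p}$. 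The key point is that a maximal subalgebra of non-compact type is semisimple (hence reductive and algebraic, cf.\ Remark~\ref{rem:alg}), and I want to enlarge it to a maximal reductive or maximal arbitrary subalgebra of $\g{g}$ in order to match the entries of the two classification lists.

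The main dichotomy I would set up is according to whether the complexification $\g{h}_\C$ is a maximal subalgebra of $\g{g}_\C$ or not. First I would embed $\g{h}$ in a maximal proper subalgebra $\g{l}$ of $\g{g}$; by Remark~\ref{rem:alg}, $\g{l}$ is algebraic. If $\g{l}$ is of non-compact type, then by maximality of $\g{h}$ among such subalgebras we get $\g{h} = \g{l}$, so $\g{h}$ is itself a maximal subalgebra of $\g{g}$; in this case $\g{h}$ appears on Komrakov's list (when $\g{h}_\C$ is not maximal in $\g{g}_\C$) or is captured by the list of de Graaf and Marrani (when $\g{h}_\C$ is maximal reductive in $\g{g}_\C$). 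If instead the maximal subalgebra $\g{l}$ containing $\g{h}$ is not of non-compact type, then $\g{l}$ has a nontrivial compact part and a nontrivial Euclidean or compact piece must be discarded; here I would argue that the non-compact part of $\g{l}$ is itself a subalgebra of non-compact type containing $\g{h}$, forcing $\g{h}$ to equal the semisimple non-compact part of $\g{l}$ after deleting compact ideals — which is exactly the operation used to construct $\widetilde{\mathcal{L}}(\g{g})$.

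More precisely, the construction of $\widetilde{\mathcal{L}}(\g{g})$ takes each subalgebra from the union of the Komrakov and de Graaf--Marrani lists, deletes its compact ideals, and records the Dynkin index of the resulting non-compact semisimple subalgebra. So the heart of the argument is to show that any $\g{h} \in \mathcal{L}(\g{g})$ arises as the image under this deletion operation of some entry in one of the two lists. Using Lemma~\ref{lemma:complexification}, the complexification $\g{h}_\C \subset \g{g}_\C$ is a maximal reductive subalgebra precisely when $\g{h}$ is; this lets me decide which of the two source lists contains the parent subalgebra: the de Graaf--Marrani list enumerates exactly those maximal reductive $\g{h}$ with $\g{h}_\C$ maximal reductive in $\g{g}_\C$, and Komrakov's list covers the complementary case where $\g{h}_\C$ is not maximal in $\g{g}_\C$. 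Since $\g{g}$ is an absolutely simple exceptional Lie algebra of rank at most $8$, both lists apply, and together they exhaust the maximal subalgebras of $\g{g}$.

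The main obstacle I anticipate is the bookkeeping at the interface between the two lists: one must verify that the union of Komrakov's and de Graaf--Marrani's classifications genuinely covers \emph{all} maximal subalgebras $\g{l}$ (not merely the maximal reductive ones), and that after deleting compact ideals no maximal non-compact-type subalgebra is missed or double-counted. In particular, I would need to be careful that when a maximal subalgebra $\g{l}$ of $\g{g}$ is neither reductive nor of non-compact type, the subalgebra $\g{h}$ I recover by extracting the non-compact semisimple part is still the \emph{same} abstract subalgebra (up to isometry, i.e.\ with the correct Dynkin index) as what the deletion procedure produces from the tabulated $\g{l}$. Once this matching is established, the inclusion $\mathcal{L}(\g{g}) \subset \widetilde{\mathcal{L}}(\g{g})$ follows, with the concrete verification for the exceptional cases being read off from Table~\ref{table:tildeL(g)}.
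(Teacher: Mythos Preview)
Your overall strategy is right and close to the paper's, but there is a genuine gap in your case analysis. You embed $\g{h}$ in a maximal \emph{proper} subalgebra $\g{l}$ of $\g{g}$ and then split according to whether $\g{l}$ is of non-compact type. In the second case you want to ``extract the non-compact part'' of $\g{l}$ and argue that $\g{h}$ sits inside it. The trouble is that a maximal subalgebra of a simple real Lie algebra need not be reductive: it can be parabolic, with a nontrivial nilpotent radical. For such $\g{l}$ there is no decomposition into a compact ideal and an ideal of non-compact type, so the sentence ``the non-compact part of $\g{l}$ is itself a subalgebra of non-compact type containing $\g{h}$'' has no meaning and the deletion-of-compact-ideals step does not apply. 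You flag this yourself as an ``obstacle'', but it is not merely bookkeeping: without resolving it, the argument does not go through.

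The paper sidesteps this entirely by embedding $\g{h}$ not in a maximal subalgebra but in a maximal \emph{reductive algebraic} subalgebra $\widetilde{\g{h}}$ of $\g{g}$ (which exists because $\g{h}$, being semisimple, is reductive algebraic by Theorems~\ref{th:karpelevichalg} and~\ref{th:karpelevichmostow}). Now $\widetilde{\g{h}}$ is reductive, so it splits as a direct sum of ideals $\widetilde{\g{h}}_1 \oplus \widetilde{\g{h}}_2$ with $\widetilde{\g{h}}_1$ of non-compact type and $\widetilde{\g{h}}_2$ compact. A short projection argument (each simple ideal of $\g{h}$ projects to $\widetilde{\g{h}}_2$ either injectively or trivially, and injectivity would put a non-compact simple algebra inside a compact one) forces $\g{h} \subset \widetilde{\g{h}}_1$, hence $\g{h} = \widetilde{\g{h}}_1$ by maximality. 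The dichotomy is then on whether $(\widetilde{\g{h}})_\C$ is maximal reductive in $\g{g}_\C$, which matches exactly the scope of the de~Graaf--Marrani and Komrakov lists respectively. One further small point: your appeal to Lemma~\ref{lemma:complexification} claims an ``if and only if'', but that lemma only gives one implication (maximality of the complexification implies maximality of the real subalgebra), so that step would also need care.
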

\begin{proof}
	Let $\g{h}\subset\g{g}$ be a maximal subalgebra of non-compact type. Then, since $\g{h}$ is semisimple, $\g{h}$ is a  reductive algebraic subalgebra by Theorem~\ref{th:karpelevichalg} and Theorem~\ref{th:karpelevichmostow}. Hence, there exists a maximal reductive algebraic subalgebra $\widetilde{\g{h}}\subset\g{g}$ such that $\g{h}\subset \widetilde{\g{h}}$.
	Observe that $\widetilde{\g{h}}=\widetilde{\g{h}}_1 \oplus \widetilde{\g{h}}_2$, where $\widetilde{\g{h}}_1$ is a subalgebra of non-compact type and $\widetilde{\g{h}}_2$ is a  compact subalgebra.

Let us show that $\g{h}\subset \widetilde{\g{h}}_1$. Let $\widehat{\g{h}}$ be a simple ideal of $\g{h}$ and consider $f\colon \widetilde{\g{h}}\rightarrow\widetilde{\g{h}}_2$, the natural projection of $\widetilde{\g{h}}$ on $\widetilde{\g{h}}_2$. Then $f$ restricted to~$\widehat{\g{h}}$ is injective or zero. If it is injective, $\widetilde{\g{h}}_2$ has a subalgebra of non-compact type, which leads to a contradiction.

By definition, $(\widetilde{\g{h}})_\C\subset\g{g}_\C$ is a reductive algebraic subalgebra.
	On the one hand, if $(\widetilde{\g{h}})_\C\subset\g{g}_\C$ is not a maximal reductive subalgebra, in particular it is not maximal and it is given in~\cite[Theorem 1]{Komrakov}. On the other hand, if $(\widetilde{\g{h}})_\C\subset\g{g}_\C$ is a maximal reductive subalgebra, it is given in~\cite{Degraaf}. This shows ${\mathcal{L}}(\g{g})\subset{\widetilde{\mathcal{L}}}(\g{g})$.
\end{proof}

We prove a criterion which ensures the non-maximality of certain totally geodesic orbits.

\begin{lemma}\label{lemma:nonmaxtg}
Let $M = G/K$ be an irreducible symmetric space of non-compact type. Let $\g{h} = \g{k}_1 \oplus \g{g}_1$ be a semisimple subalgebra of~$\g{g}$, which is the direct sum of the ideals $\g{k}_1$ and $\g{g}_1$, where $\g{k}_1$ is compact and $\g{g}_1$ is of non-compact type.
Assume $\g{h}$ is canonically embedded into~$\g{g}$ with respect to the Cartan decomposition $\g{g}=\g{k}\oplus\g{p}$. Then the following holds:
\begin{enumerate}[i)]
  \item The linear subspace~$\ell := \g{h} \cap \g{p}$ of~$\g{p}$ is a Lie triple system and $L := \exp_o(\ell)$ is a totally geodesic orbit of the $H$-action on~$M$, where $\exp_o$ denotes the Riemannian exponential map of~$M$ at the point~$o$.
  \item The connected Lie subgroup~$K_1$ of~$K$ corresponding to~$\g{k}_1$ acts effectively on the normal space~$N_o(H \cdot o)$ to the $H$-orbit at~$o$ and trivially on~$\ell$.
  \item If $K_1$ contains a subgroup~$Q$ and there are vectors~$v,w \in N_o(H \cdot o)$ such that $K_1 \cdot v \neq v$, $Q \cdot v = v$, $Q \cdot w \neq w$, then $L$ is not a maximal totally geodesic submanifold of~$M$.
  \item If~$K_1$ is of rank greater than one or if the action of~$K_1$ on~$N_o(H \cdot o)$ is effectively an~$\mathsf{SO}_3$-representation, then $L$ is not a maximal totally geodesic submanifold of $M$.
\end{enumerate}
\end{lemma}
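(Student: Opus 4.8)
The plan is to establish the four statements in sequence, since each builds on the structural observation that $\g{h}$ splits as an orthogonal direct sum of a compact ideal and a non-compact-type ideal. First I would prove \textit{i)}: since $\g{h}$ is canonically embedded, $\g{h} = (\g{h}\cap\g{k})\oplus(\g{h}\cap\g{p})$, so $\ell = \g{h}\cap\g{p}$ is exactly the $(-1)$-eigenspace of the Cartan involution $\theta$ restricted to $\g{h}$. Because $\g{h}$ is a subalgebra and $\ell\subset\g{p}$, the triple bracket $[[\ell,\ell],\ell]$ lands in $[\g{h}\cap\g{k},\g{h}\cap\g{p}]\subset\g{h}\cap\g{p}=\ell$, using the standard bracket relations $[\g{k},\g{p}]\subset\g{p}$ together with $\ell\subset\g{h}$. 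Hence $\ell$ is a Lie triple system, and $L=\exp_o(\ell)$ is the corresponding complete totally geodesic submanifold, which coincides with the $H$-orbit $H\cdot o$ by the homogeneous presentation discussed in \S\ref{subsect:pretot} (here $H\cdot o$ is totally geodesic because $\g{h}$ is canonically embedded).

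Next, for \textit{ii)}, the key point is that $\g{k}_1$ is a compact \emph{ideal} of $\g{h}$, so $[\g{k}_1,\g{g}_1]=0$ and in particular $[\g{k}_1,\ell]\subset[\g{k}_1,\g{g}_1]=0$; since the isotropy action of $K_1$ on $\g{p}$ is via $\ad$, this shows $K_1$ fixes $\ell$ pointwise, giving the triviality on $\ell$. For effectiveness on the normal space $N_o(H\cdot o)$: any element of $\g{k}_1$ acting trivially on all of $N_o(H\cdot o)$ would, combined with its trivial action on $\ell=T_o L=T_o(H\cdot o)$, act trivially on the whole tangent space $T_o M=\g{p}$, hence lie in the center of $\g{g}$; as $\g{g}$ is semisimple this forces the element to be zero, so the representation of $K_1$ on $N_o(H\cdot o)$ is effective.

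For \textit{iii)}, the strategy is to enlarge the Lie triple system $\ell$ strictly. Given the $Q$-fixed vector $v\in N_o(H\cdot o)$ with $K_1\cdot v\neq v$, I would consider the subspace $\ell' := \ell \oplus \R v$ and, more precisely, work with the Lie triple system generated by $\ell$ and $v$, or alternatively consider the fixed point set of the isometry determined by $Q$. The cleaner route is to observe that the conditions $Q\cdot v = v$ and $Q\cdot w\neq w$ guarantee that the $Q$-fixed subspace of $\g{p}$ is a proper reflective (hence curvature-invariant) Lie triple system strictly containing $\ell$ and strictly smaller than $\g{p}$, because $v$ lies in it while $w$ does not. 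This produces a totally geodesic submanifold strictly between $L$ and $M$, contradicting maximality. The main obstacle here is verifying carefully that the enlarged subspace is genuinely a Lie triple system and that both inclusions are strict; this hinges on the interplay between the $K_1$-action and the $Q$-action on the normal space, and on the fact that $K_1\cdot v\neq v$ prevents $v$ from already lying in $\ell$.

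Finally, for \textit{iv)}, I would reduce both hypotheses to the situation of \textit{iii)} by exhibiting an appropriate subgroup $Q\subset K_1$ and suitable vectors $v,w$. If $\rank K_1 > 1$, then $K_1$ contains a torus of dimension at least two; for a non-trivial representation of such a group on the normal space one can find a proper subtorus $Q$ fixing some non-zero weight vector $v$ while moving another weight vector $w$, which is exactly the configuration needed in \textit{iii)}. In the case where the $K_1$-action on $N_o(H\cdot o)$ is effectively an $\mathsf{SO}_3$-representation, one takes $Q\cong\mathsf{SO}_2$ to be a circle subgroup: any non-trivial $\mathsf{SO}_3$-representation decomposes into irreducible pieces on which a circle subgroup has both fixed vectors and moved vectors, so again the hypotheses of \textit{iii)} are met. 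I expect the delicate part of the whole argument to be \textit{iii)}, specifically the rigorous check that the candidate enlargement of $\ell$ is curvature-invariant and proper on both sides; the representation-theoretic bookkeeping in \textit{iv)} is then a matter of selecting $Q$, $v$, $w$ correctly for each of the two scenarios.
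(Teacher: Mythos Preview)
Your approach to parts \textit{i)}--\textit{iii)} matches the paper's. Two small remarks on \textit{iii)}: your first candidate $\ell \oplus \R v$ is not obviously curvature-invariant, but you correctly switch to the $Q$-fixed subspace $F$ of $\g{p}$, which is exactly what the paper uses (it is the tangent space at $o$ to the fixed-point set of the isometric $Q$-action on $M$, hence automatically a Lie triple system). The word ``reflective'' is a slight misnomer, as that term is reserved for fixed sets of involutions. Also, the role of the hypothesis $K_1 \cdot v \neq v$ is simply to force $v \neq 0$; since $v$ lies in the normal space this gives $F \supsetneq \ell$, while $Q \cdot w \neq w$ gives $F \neq \g{p}$. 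So \textit{iii)} is in fact not the delicate part.

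For part \textit{iv)} in the case $\rank K_1 > 1$, your approach is genuinely different from, and more elementary than, the paper's. The paper observes that a singular $K_1$-orbit in $N_o(H\cdot o)$ which is not a fixed point immediately furnishes a suitable $Q$ (the isotropy at a point of that orbit); it then reduces to the situation where every non-principal orbit is a fixed point, invokes a classification of regular homogeneous foliations of spheres due to Lu to force $K_1$ to act transitively on the unit sphere in the non-trivial summand, and finally runs through the Montgomery--Samelson list of transitive sphere actions to verify that every group of rank $\geq 2$ on that list has a non-trivial point stabilizer. Your torus argument bypasses all of this: effectiveness of $K_1$ on $N_o(H\cdot o)$ forces the weights of a maximal torus $T$ to span $\g{t}^*$, so with $\rank K_1 \geq 2$ there are at least two linearly independent weights $\alpha, \beta$; taking $Q = (\ker\alpha)^0 \subset T$ and $v,w$ in the real weight spaces for $\alpha,\beta$ gives the configuration of \textit{iii)}. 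This is shorter and avoids two external classification results. The paper's route, on the other hand, makes transparent exactly which rank-one situations can fail, isolating the obstruction as a free action on the unit sphere in the normal space; this is what motivates the $\mathsf{SO}_3$ hypothesis as the sharp replacement in rank one. Your treatment of the $\mathsf{SO}_3$ case (take $Q = \mathsf{SO}_2$ and use that odd-dimensional irreducibles have a zero weight) is the same as the paper's.
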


\begin{proof}
Since $\g{h}$ is canonically embedded into~$\g{g}$, we have that $\g{h} = (\g{h} \cap \g{k}) \oplus \ell$ is a Cartan decomposition of~$\g{h}$. This shows that $\ell$ is a Lie triple system in~$\g{p}$ and hence its exponential image is a totally geodesic submanifold of~$M$.  This proves \emph{i)}.

Observe that $K$ acts effectively on~$M$ since it is a subgroup of the isometry group of $M$. Every isometry~$f$ of~$M$ is uniquely defined by the image~$f(p)$ and the differential~$f_{*p}$ for one arbitrary point~$p \in M$. In particular, the action of an element in the isotropy group~$K=G_o$ on~$M$ is uniquely determined by the action of its differential on~$\g{p}=T_oM$. Therefore, if $k \in K$ acts trivially on~$\g{p}$, then $k = e$.
Since $\g{h}$ is canonically embedded, it is invariant under $\theta$. Thus the restriction of $\theta$ to~$\g{h}$ is a Lie algebra automorphism and it follows that $\theta$ leaves invariant $\g{k}_1$ and $\g{g}_1$, since $\g{k}_1$ is compact and $\g{g}_1$ is of non-compact type. Hence $\g{k}_1$ and $\g{g}_1$ are canonically embedded in~$\g{g}$ with respect to the Cartan decomposition $\g{g}=\g{k}\oplus\g{p}$ and it follows that
$\g{h}\cap\g{k}=\g{k}_1\oplus(\g{g}_1\cap\g{k})$ and $\g{h}\cap \g{p}=\g{g}_1\cap \g{p}=\ell$.
In particular, the subspaces $\g{k}_1$ and $\ell$ of $\g{g}$ commute. Hence, the connected subgroup $K_1$ acts trivially on~$\ell$ and effectively on $N_o(H\cdot o)$, since $K_1$ is a subgroup of the isometry group of $M$. This proves \emph{ii)}.

Assume there is a subgroup~$Q \subset K_1$ as described in part~\emph{iii)} of the assertion. Let $N$ be the connected component of the fixed point set of the $Q$-action on~$M$ containing $o$. Then $N$ is a totally geodesic submanifold of~$M$ containing~$o$ and $N=\exp_o(F)$, where $F$ is the set of fixed points under the action of $Q$ on~$\g{p}$. It follows that $F$ is a Lie triple system in~$\g{p}$, which contains~$\ell$, since $K_1$ acts trivially on~$\ell$ by~\emph{ii)}. Since there are vectors~$v,w \in N_o(H \cdot o)$ such that $K_1 \cdot v \neq v$, $Q \cdot v = v$, $Q \cdot w \neq w$, it follows that the dimension of~$F$ is strictly greater than the dimension of~$\ell$ and that~$F \neq \g{p}$. Hence $N$ is a proper totally geodesic submanifold containing~$L$ and $N \neq M$. This proves \textit{iii)}.

If the action of~$K_1$ on~$N_o(H \cdot o)$ is effectively an~$\mathsf{SO}_3$-representation then it is a direct sum of odd-dimensional irreducible real representations of~$K_1$ and a trivial module. Since non-trivial real representations of the abelian group~$\mathsf{SO}_2$ are even-dimensional, it follows that if we choose $Q = \mathsf{SO}_2$, then there are vectors~$v,w \in N_o(H \cdot o)$ as in item~\emph{iii)} of the statement.

Let us now assume that the rank of~$K_1$ is greater than one. First note that for the existence of a subgroup~$Q$ as described in~\emph{iii)}, it suffices that the action of~$K_1$ on~$\g{p}$ has a singular orbit which is not a fixed point of the $K_1$-action on~$\g{p}$. Indeed, let $v \in \g{p}$ be contained in such a singular orbit and let $Q$ be the isotropy subgroup~$(K_1)_v$. Then $K_1 \cdot v \neq v$ and $Q \cdot v = v$. If there is no vector $w \in N_o(H \cdot o)$ such that $Q \cdot w \neq w$, then all isotropy groups contain~$Q$, which implies that $Q$ is a principal isotropy group, a contradiction.

We may now assume that all singular orbits of the action of~$K_1$ on~$\g{p}$ are fixed points.
This implies that, as a $K_1$-module, $\g{p} = V_0 \oplus V_1$ is a direct sum of a trivial module~$V_0$ and a module~$V_1$ on which $K_1$~acts in such a fashion that each non-zero vector lies in a principal orbit. Then $K_1$ acts  on the unit sphere in~$V_1$ in such a way that all orbits are of the same dimension. In particular, the orbits comprise a regular homogeneous foliation of the unit sphere in~$V_1$. It follows from~\cite[Theorem~1.1]{Lu} that if this foliation is non-trivial (i.e.\ the action is not transitive on the unit sphere), the orbits are either one-dimensional or three-dimensional and that $K_1$, which acts effectively on~$\g{p}$, is isomorphic to~$\mathsf{U}_1$ or~$\mathsf{SU}_2$. Since we have excluded these cases by our hypothesis, we may from now on assume that the action is transitive on the unit sphere.

Hence $K_1$ is a connected Lie group acting effectively and transitively on a unit sphere. The corresponding presentations of spheres are, by~\cite{MontSam},
\begin{align*}
\mathrm{S}^n &= {\mathsf{SO}_{n+1}}/{\mathsf{SO}_n},\quad
\mathrm{S}^{2n+1} = {\mathsf{U}_{n+1}}/{\mathsf{U}_n} = {\mathsf{SU}_{n+1}}/{\mathsf{SU}_n},
\\
\mathrm{S}^{4n+3} &= {\mathsf{Sp}_{n+1}\mathsf{Sp}_1 }/{ \mathsf{Sp}_n\mathsf{Sp}_1} = {\mathsf{Sp}_{n+1}\mathsf{U}_1}/{\mathsf{Sp}_n\mathsf{U}_1} = {\mathsf{Sp}_{n+1}}/{\mathsf{Sp}_n},
\\
\mathrm{S}^{6} &= {\mathsf{G}_2}/{\mathsf{SU}_3},
\quad
\mathrm{S}^{7} = {\mathsf{Spin}_7}/{\mathsf{G}_2},
\quad
\mathrm{S}^{15} = {\mathsf{Spin}_9}/{\mathsf{Spin}_7}.
\end{align*}

By inspection of this list, we see that if the rank of~$K_1$ is two or greater, we may choose~$Q$ as the isotropy subgroup of any non-zero vector~$v \in \g{p}$; then $K_1 \cdot v \neq v$ and $Q \cdot v = v$ and $Q$ acts non-trivially on the orthogonal complement of~$v$ in~$\g{p}$, showing there is a vector~$w \in \g{p}$ such that $Q \cdot w \neq w$. We have completed the proof of~\emph{iv)}.\qedhere
\end{proof}

\begin{table}[p]\caption{$\widetilde{\mathcal{L}}(\g{g})$ for each exceptional absolutely simple real Lie algebra~$\g{g}$. We include the deleted compact ideals for the convenience of the reader.}
	\label{table:tildeL(g)}
	\begin{tabular}{cl}	
		\hline
		\multirow{1}{*}{$\g{g}^2_2$} & 	$(\g{sl}_3(\R),1),(\g{su}_{1,2},1),(\g{sl}_2(\R)\oplus\g{sl}_2(\R),(3,1)),(\g{sl}_2(\R),28)$
		\\ \hline
		\multirow{3}{*}{$\g{f}^4_{4}$} & 	$(\g{sl}_3(\R)\oplus\g{sl}_3(\R),(1,2)),(\g{su}_{1,2}\oplus\g{su}_{1,2},(1,2)),(\g{su}_{1,2}\oplus\cancel{\g{su}_3},2),$
		\\
		&$(\g{sl}_2(\R)\oplus\g{sp}_3(\R),(1,1)),(\g{sp}_{1,2}\oplus\cancel{\g{su}_2},1),(\g{so}_{4,5},1),(\g{sl}_{2}(\R),156),$
		\\& $(\g{sl}_2(\R)\oplus\g{g}_2^2,(8,1)),(\g{g}_2^2\oplus\cancel{\g{su}_2},1)$ \\
		\hline
		\multirow{1}{*}{$\g{f}^{-20}_{4}$} & 	$(\g{su}_{1,2} \oplus\cancel{\g{su}_3},2),(\g{sp}_{1,2}\oplus\cancel{\g{su}_2},1),(\g{so}_{1,8},1),(\g{sl}_2(\R)\oplus\cancel{\g{g}_2},8)$
		\\ \hline
		\multirow{3}{*}{$\g{e}^6_{6}$} & 	$(\g{so}_{5,5}\oplus\cancel{\R},1),(\g{sl}_{3}(\R)\oplus\g{sl}_{3}(\R)\oplus\g{sl}_{3}(\R),(1,1,1)),(\g{su}_{1,2}\oplus\g{sl}_3(\C),(1,1,1)),$
		\\
		&$(\g{sl}_2(\R)\oplus\g{sl}_6(\R),(1,1)),(\g{su}^*_{6}\oplus\cancel{\g{su}_2},1),(\g{sl}_{3}(\R)\oplus\g{g}^2_2,(2,1)),(\g{sp}_{2,2},1),$
		\\
		&$(\g{sp}_4(\R),1),(\g{f}_4^4,1)$
		\\ \hline
		\multirow{3}{*}{$\g{e}^2_{6}$} & $(\g{so}_{4,6}\oplus\cancel{\R},1),(\g{so}^*_{10}\oplus\cancel{\R},1),(\g{sl}_{3}(\C)\oplus\g{sl}_3(\R),(1,1,1)),(\g{su}_{1,2}\oplus\cancel{\g{su}_3}\oplus\cancel{\g{su}_3},1),$	
		\\
		&$(\g{su}_{1,2}\oplus\g{su}_{1,2}\oplus\g{su}_{1,2},(1,1,1)),(\g{sl}_2(\R)\oplus\g{su}_{3,3},(1,1)),(\g{su}_{2,4}\oplus\cancel{\g{su}_2},1),(\g{su}_{1,2},9),$\\
		&$(\g{sl}_3(\R),9),(\g{g}^2_2\oplus\cancel{\g{su}_3},1),(\g{g}_2^2,3),(\g{su}_{1,2}\oplus\g{g}_2^2,(2,1)),(\g{sp}_{1,3},1),(\g{sp}_4(\R),1),(\g{f}^4_4,1)$\\
		\hline
		\multirow{2}{*}{$\g{e}^{-14}_{6}$} &$(\g{so}^*_{10}\oplus\cancel{\R},1),(\g{so}_{2,8}\oplus\cancel{\R},1),(\g{su}_{1,2}\oplus\g{su}_{1,2}\oplus\cancel{\g{su}_3},(1,1)),(\g{sl}_2(\R)\oplus\g{su}_{1,5},(1,1)),$
		\\
		 &$(\g{su}_{2,4}\oplus\cancel{\g{su}_2},1),(\g{su}_{1,2}\oplus\cancel{\g{g}_2},2),(\g{sp}_{2,2},1),(\g{f}^{-20}_4,1)$\\ \hline
		\multirow{2}{*}{$\g{e}^{-26}_{6}$} &$(\g{so}_{1,9}\oplus\cancel{\R},1),(\g{sl}_{3}(\C)\oplus\cancel{\g{su}_3}, (1,1)),(\g{su}^*_{6}\oplus\cancel{\g{su}_2},1),(\g{sl}_3(\R)\oplus\cancel{\g{g}_2},2),$
		\\
		&$(\g{sp}_{1,3},1),(\g{f}^{-20}_4,1)$
		 \\ \hline
		\multirow{4}{*}{$\g{e}^{7}_{7}$} &$(\g{e}^{6}_6\oplus\cancel{\R},1),(\g{e}^{2}_6\oplus\cancel{\R},1),(\g{sl}_2(\R)\oplus\g{so}_{6,6},(1,1)),(\g{so}^*_{12}\oplus\cancel{\g{su}_2},1),(\g{sl}_8(\R),1),$\\
		&$(\g{su}_{4,4},1),(\g{su}^*_8,1),(\g{sl}_3(\R)\oplus\g{sl}_{6}(\R),(1,1)),(\g{su}_{1,2}\oplus\g{su}_{3,3},(1,1)),(\g{su}_{1,5}\oplus\cancel{\g{su}_3},1),$\\
		&$(\g{sl}_2(\R),231),(\g{sl}_2(\R),399),(\g{sl}_3(\R),21),(\g{sl}_2(\R)\oplus\g{sl}_{2}(\R),(15,24))$\\
		&$(\g{sl}_2(\R)\oplus\cancel{\g{g}_2},7),(\g{sl}_2(\R)\oplus\g{g}^2_2,(7,2)),(\g{sp}_3(\R)\oplus\g{g}^2_2,(1,1)),(\g{sl}_2(\R)\oplus\g{f}^{4}_4,(3,1))$
		\\ \hline
		\multirow{3}{*}{$\g{e}^{-25}_{7}$} &$(\g{e}^{-26}_6\oplus\cancel{\R},1),(\g{e}^{-14}_6\oplus\cancel{\R},1),(\g{sl}_2(\R)\oplus\g{so}_{2,10},(1,1)),(\g{so}^*_{12}\oplus\cancel{\g{su}_2},1),(\g{su}^*_8,1),$
		\\	
		& $(\g{su}_{2,6},1),(\g{su}_{3,3}\oplus\cancel{\g{su}_3},1),(\g{su}_{1,2}\oplus\g{su}_{1,5},(1,1)),(\g{sp}_3(\R)\oplus\cancel{\g{g}_2},1),(\g{sl}_2(\R)\oplus\cancel{\g{f}_4},3),$
		\\
		& $(\g{sl}_2(\R)\oplus\g{f}^{-20}_4,(3,1))$
		\\ \hline
		\multirow{4}{*}{$\g{e}^{-5}_{7}$} &$(\g{e}^{2}_6\oplus\cancel{\R},1),(\g{e}^{-14}_6\oplus\cancel{\R},1),(\g{sl}_2(\R)\oplus\g{so}^*_{12},(1,1)),(\g{so}_{4,8}\oplus\cancel{\g{su}_2},1),(\g{su}_{4,4},1),$	
		\\	
		& $(\g{su}_{2,6},1),(\g{sl}_3(\R)\oplus\g{su}^*_6,(1,1)),(\g{su}_{1,2}\oplus\g{su}_{2,4},(1,1)),(\g{su}_{2,4}\oplus\cancel{\g{su}_3},1),$\\
		&$(\g{su}_{1,2}\oplus\cancel{\g{su}_6},1),(\g{su}_{1,2},21),(\g{sl}_{2}(\R)\oplus\cancel{\g{su}_2},24),(\g{g}^2_2\oplus\cancel{\g{su}_2},2),(\g{g}^2_2\oplus\cancel{\g{sp}_3},1),$
		\\
		&
		$(\g{sp}_{1,2}\oplus\cancel{\g{g}_2},1),(\g{sp}_{1,2}\oplus\g{g}^2_2,(1,1)),(\g{f}^{4}_4\oplus\cancel{\g{su}_2},1),(\g{f}^{-20}_4\oplus\cancel{\g{su}_2},1)$
		\\ \hline
		\multirow{6}{*}{$\g{e}^{8}_{8}$} &$(\g{su}_{1,4}\oplus\cancel{\g{su}_5},1),(\g{su}_{1,4}\oplus\g{su}_{1,4},(1,1)),(\g{su}_{2,3}\oplus\g{su}_{2,3},(1,1)),(\g{sl}_3(\R)\oplus \g{e}^6_6,(1,1)),$\\	
		& $(\g{sl}_5(\R)\oplus\g{sl}_5(\R),(1,1)),(\g{so}_{8,8},1),(\g{so}^*_{16},1),(\g{su}_{1,8},1),(\g{su}_{4,5},1),(\g{sl}_9(\R),1),$\\
		&$(\g{e}^{-14}_6\oplus\cancel{\g{su}_3},1),(\g{su}_{1,2}\oplus\g{e}^2_6,(1,1)),(\g{e}^{-5}_7\oplus\cancel{\g{su}_2},1),(\g{sl}_2(\R)\oplus\g{e}^7_7,(1,1)),(\g{so}_{2,3},12),$\\
		& $(\g{so}_{1,4},12),(\g{su}_{1,2}\oplus\cancel{\g{su}_2},6),(\g{sl}_2(\R)\oplus\cancel{\g{su}_3},16), (\g{sl}_2(\R)\oplus\g{su}_{1,2},(16,6)),$\\
		& $(\g{sl}_2(\R)\oplus\g{sl}_3(\R),(16,6)),(\g{f}^{-20}_4\oplus\cancel{\g{g}_2},1),(\g{f}^4_4\oplus\g{g}^2_2,(1,1)),$
		\\
		&
		$(\g{sl}_2(\R),520),(\g{sl}_2(\R),760),(\g{sl}_2(\R),1240),(\g{g}_2(\C)\oplus\g{sl}_2(\R),(1,1,8))$
		\\ \hline
		\multirow{4}{*}{$\g{e}^{-24}_{8}$} &$(\g{su}_{1,4}\oplus\g{su}_{2,3},(1,1)),(\g{su}_{2,3}\oplus\cancel{\g{su}_5},1),(\g{so}_{4,12},1),(\g{so}^*_{16},1),(\g{su}_{3,6},1),(\g{su}_{2,7},1),$\\	
		& $(\g{e}^2_6\oplus\cancel{\g{su}_3},1),(\g{su}_{1,2}\oplus\g{e}^{-14}_6,(1,1)),(\g{su}_{1,2}\oplus\cancel{\g{e}_6},1),(\g{sl}_3(\R)\oplus\g{e}^{-26}_6,(1,1)),$\\
		&$(\g{e}^{-5}_7\oplus\cancel{\g{su}_2},1),(\g{sl}_2(\R)\oplus\g{e}^{-25}_7,(1,1)),(\g{sl}_3(\R)\oplus\cancel{\g{su}_2},6),(\g{f}^4_4\oplus\cancel{\g{g}_2},1),$
		\\
		& $(\g{g}_2^2\oplus\cancel{\g{f}_4},1),(\g{f}^{-20}_4\oplus\g{g}^2_2,(1,1)),(\g{g}_2(\C)\oplus\cancel{\g{su}_2},(1,1))$
		\\ \hline
	\end{tabular}
\end{table}
Observe that while many subalgebras of $\widetilde{\mathcal{L}}(\g{g})$ are obtained by removing a compact ideal $\g{k}_1$ of rank greater than one, hence satisfying the hypotheses of Lemma~\ref{lemma:nonmaxtg}~\textit{iv)}, there are still some subalgebras of $\widetilde{\mathcal{L}}(\g{g})$ which cannot be directly treated using this lemma. In order to deal with these cases, we prove the following results.
\begin{lemma}
	\label{lemma:exceptions}
	Let $\g{g}$ be an exceptional absolutely simple non-compact real Lie algebra.
We consider the following maximal subalgebras $\g{h}$ of real Lie algebras~$\g{g}$ which are the non-compact parts of some subalgebras of $\g{g}$ with a rank one simple compact factor appearing in~$\mathcal{\widetilde{L}}(\g{g})$:
	\begin{align*}
	\g{g}_2^2 \subset \g{f}_4^4; \quad
	\g{sl}_2(\R),\;  \g{g}_2^2,\; \g{f}_4^4,\; \g{f}_4^{-20} \subset \g{e}_7^{-5};\quad
	\g{g}_2(\C),\;  \g{sl}_3(\R) \subset \g{e}_8^{-24};  \quad
	 \g{su}_{1,2} \subset \g{e}_8^8.
	\end{align*}
	Let $G$ be the simply connected Lie group with Lie algebra~$\g{g}$ and let $K$ be a  maximal compact subgroup of~$G$. Assume the Lie algebra~$\g{h}$ is canonically embedded into~$\g{g}$ with respect to the Cartan decomposition~$\g{g} = \g{k} \oplus \g{p}$. Let $H$ be the connected closed subgroup of~$G$ with Lie algebra~$\g{h}$. Then a totally geodesic orbit of the $H$-action on the symmetric space~$M = G/K$ is not maximal.
\end{lemma}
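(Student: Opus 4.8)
The plan is to realize each listed non-compact subalgebra $\g{h}$ as the non-compact part of the reductive subalgebra $\g{s} := \g{h}\oplus\g{k}_1 \subset \g{g}$ appearing in $\widetilde{\mathcal{L}}(\g{g})$, where $\g{k}_1 \cong \g{su}_2$ is the rank-one simple compact ideal, and then to apply Lemma~\ref{lemma:nonmaxtg} to the decomposition $\g{s} = \g{k}_1 \oplus \g{h}$ (in the notation of that lemma, our $\g{h}$ plays the role of~$\g{g}_1$). By Theorem~\ref{th:karpelevichalg} we may assume $\g{s}$ is canonically embedded, so that $\ell = \g{s}\cap\g{p} = \g{h}\cap\g{p}$ is a Lie triple system whose exponential image is exactly the orbit $L = H\cdot o$, and by Lemma~\ref{lemma:nonmaxtg} \emph{ii)} the group $K_1$ acts trivially on $\ell$ and effectively, hence faithfully, on the normal space $N := N_o(H\cdot o) = \g{p}\ominus\ell$. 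It therefore suffices to show, in each case, that the action of $K_1$ on $N$ is effectively an $\mathsf{SO}_3$-representation, for then Lemma~\ref{lemma:nonmaxtg} \emph{iv)} yields that $L$ is not maximal.

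I would reduce this to a single uniform statement: in each case the adjoint action of $\g{k}_1 \cong \g{su}_2$ on~$\g{g}$ decomposes into odd-dimensional irreducible summands only. Since $[\g{k}_1,\g{p}]\subset\g{p}$ and $\g{k}_1$ preserves~$\ell$, the normal space $N$ is a $\g{k}_1$-submodule of~$\g{g}$, so this forces every irreducible summand of~$N$ to be odd-dimensional as well; equivalently, the non-trivial central element of~$\mathsf{SU}_2$ acts trivially on~$N$, so that $K_1$ acts through~$\mathsf{SO}_3$ and $N$ is a sum of odd-dimensional $\mathsf{SO}_3$-irreducibles together with a trivial module. As $K_1$ acts non-trivially on~$N$ by effectiveness, there is a non-trivial odd-dimensional summand, and the action is then effectively an $\mathsf{SO}_3$-representation in the precise sense used in the proof of Lemma~\ref{lemma:nonmaxtg} \emph{iv)}.

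To verify this uniform statement I would compute, case by case, the branching of the adjoint representation of~$\g{g}$ under~$\g{s}$ and read off the $\g{k}_1$-isotypic content; since the relevant weights are unchanged under complexification, it is enough to work with the complexified embedding $\g{h}_\C\oplus\g{sl}_2 \hookrightarrow \g{g}_\C$, splitting the complex $\g{g}_2(\C)$ as $\g{g}_2\oplus\g{g}_2$ and each real form as its complexification. For $\g{g}_2^2\oplus\g{su}_2\subset\g{f}_4^4$, the maximal subalgebra $\g{g}_{2}\oplus\g{sl}_2\subset\g{f}_4$ gives $\mathbf{52}=(\mathbf{14},\mathbf{1})\oplus(\mathbf{1},\mathbf{3})\oplus(\mathbf{7},\mathbf{5})$, whose $\g{sl}_2$-content $\{\mathbf{1},\mathbf{3},\mathbf{5}\}$ is purely odd-dimensional; likewise $\g{f}_4\oplus\g{sl}_2\subset\g{e}_7$ gives $\mathbf{133}=(\mathbf{52},\mathbf{1})\oplus(\mathbf{1},\mathbf{3})\oplus(\mathbf{26},\mathbf{3})$, again with odd $\g{sl}_2$-content. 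The remaining embeddings into~$\g{e}_7$ and~$\g{e}_8$ are treated the same way, using the branching data for the relevant maximal and S-subalgebras and, where convenient, factoring through an intermediate maximal subalgebra such as $\g{g}_2\oplus\g{f}_4\subset\g{e}_8$.

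The main obstacle I anticipate is exactly this last computational step: obtaining the correct branching of the adjoint representation under the low-rank subalgebras $\g{sl}_2$, $\g{g}_2$, $\g{sl}_3$ and their complexifications inside~$\g{e}_7$ and~$\g{e}_8$, and confirming in every instance that the $\g{k}_1$-content contains no even-dimensional (quaternionic-type) summand. Should such a summand appear in some normal space~$N$, the clean $\mathsf{SO}_3$-criterion of Lemma~\ref{lemma:nonmaxtg} \emph{iv)} would fail, and one would instead invoke part~\emph{iii)} directly, choosing $Q=\mathsf{SO}_2\subset K_1$ and exhibiting a weight-zero vector~$v$ inside an odd-dimensional summand together with a moved vector~$w$ in a non-trivial weight space. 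I expect, however, that all of the listed embeddings are of $\mathsf{SO}_3$-type, so that the uniform argument applies throughout and $L$ is non-maximal in every case.
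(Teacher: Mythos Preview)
Your proposal is correct and follows essentially the same route as the paper: reduce to Lemma~\ref{lemma:nonmaxtg}~\emph{iv)} by showing that the adjoint action of the compact $\g{su}_2$-ideal on~$\g{g}$ has only odd-dimensional irreducible summands, so that $K_1$ acts on the normal space through~$\mathsf{SO}_3$. The only difference is that where you propose to compute the branchings of~$\ad_{\g{g}}$ case by case (and correctly anticipate this as the laborious step), the paper simply cites Dynkin~\cite[Table~21]{Dynkin}, which tabulates precisely the characteristic representation $\ad_{\g{g}}\ominus\ad_{\g{su}_2}$ for each of these $\g{su}_2$-subalgebras and exhibits them all as sums of odd-dimensional $\g{su}_2$-modules; this table is the shortcut that removes your anticipated obstacle.
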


\begin{proof}
	The complexifications of all the subalgebras~$\g{h}$ in the statement of the lemma can be found in~\cite[Table~39, p.~233]{Dynkin}, where we can also read off the Dynkin indices (given as superscripts in the table) of (the complexifications of) their compact ideals (which are all isomorphic to~$\g{su}_2$).
	The so-called characteristic representations of the $\g{su}_2$-summands, i.e.\ the representations $\ad_{\g{g}} \ominus \ad_{\g{su}_2}$ are given in~\cite[Table~21, p.~186-187]{Dynkin}. It follows from this table that they all are direct sums of odd-dimensional irreducible $\g{su}_2$-representations and a trivial module. Hence, on the level of Lie groups, the corresponding representations are effectively~$\mathsf{SO}_3$-representations.
	Thus the subgroup~$K_1$ as described in Lemma~\ref{lemma:nonmaxtg} effectively acts as~$\mathsf{SO}_3$ on~$\g{p}$ and it follows from Lemma~\ref{lemma:nonmaxtg} \textit{iv)} that a totally geodesic orbit of the $H$-action on~$M$ is not maximal.
\end{proof}

\begin{lemma}
	\label{lemma:nonsemi}
	Consider the following subalgebras $\g{h}$ of real Lie algebras~$\g{g}$ which are the semisimple parts of some subalgebras of $\g{g}$ with one dimensional center appearing in~$\mathcal{\widetilde{L}}(\g{g})$:
	\begin{align*}
	\g{so}_{5,5}  &\subset \g{e}_6^6, \qquad\; \g{so}_{9,1} \subset \g{e}_6^{-26},
	\\
	\g{e}^6_6 &\subset \g{e}^7_7, \qquad\; \g{e}^{-26}_6 \subset \g{e}_7^{-25}.\;
	\end{align*}
	Let $G$ be the simply connected Lie group with Lie algebra~$\g{g}$ and let $K$ be a maximal compact subgroup of~$G$. Assume the Lie algebra~$\g{h}$ is canonically embedded into~$\g{g}$ with respect to the Cartan decomposition~$\g{g} = \g{k} \oplus \g{p}$. Let $H$ be the connected closed subgroup of~$G$ with Lie algebra~$\g{h}$. Then a totally geodesic orbit of the $H$-action on the symmetric space~$M = G/K$ is not maximal.
\end{lemma}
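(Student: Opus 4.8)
The plan is to recover, in each of the four cases, the full reductive subalgebra $\g{h}\oplus\R$ obtained by restoring the deleted one-dimensional center, and to show that this center lies in~$\g{p}$, so that $\g{h}\oplus\R$ determines a proper totally geodesic submanifold strictly larger than the $H$-orbit. The starting observation is that each $\g{h}\oplus\R$ is the reductive (Levi) part of a maximal parabolic subalgebra of~$\g{g}$: deleting the end node of the $D_5$-subdiagram in the Dynkin diagram of~$E_6$ yields an $\g{so}_{10}$-Levi factor, realized over~$\R$ as $\g{so}_{5,5}\oplus\R\subset\g{e}_6^6$ and $\g{so}_{9,1}\oplus\R\subset\g{e}_6^{-26}$, while the analogous node of~$E_7$ yields an $\g{e}_6$-Levi factor, realized as $\g{e}_6^6\oplus\R\subset\g{e}_7^7$ and $\g{e}_6^{-26}\oplus\R\subset\g{e}_7^{-25}$. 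In all four cases the center $\R=\R H$ is spanned by the grading element~$H$ of the associated $\mathbb{Z}$-grading $\g{g}=\g{g}_{-1}\oplus\g{g}_0\oplus\g{g}_1$, so that $\ad_H$ is diagonalizable with nonzero real eigenvalues.

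Since $\g{h}\oplus\R$ is a reductive algebraic subalgebra whose center consists of semisimple elements, Theorem~\ref{th:karpelevichmostow} lets us assume that it is canonically embedded with respect to a Cartan decomposition $\g{g}=\g{k}\oplus\g{p}$ with Cartan involution~$\theta$; as the derived subalgebra~$\g{h}$ and the center~$\R$ are characteristic, both are $\theta$-invariant, so $\g{h}$ itself is canonically embedded as in the statement. A canonically embedded one-dimensional ideal lies entirely in~$\g{k}$ or entirely in~$\g{p}$. Now every element of~$\g{k}$ acts on~$\g{g}$ by an endomorphism that is skew-symmetric for the positive definite form $-\mathcal{B}_{\g{g}}(\,\cdot\,,\theta\,\cdot\,)$, hence with purely imaginary eigenvalues; since $\ad_H$ has nonzero real eigenvalues, we get $H\notin\g{k}$ and therefore $\R H\subset\g{p}$.

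It follows that $\ell':=(\g{h}\oplus\R)\cap\g{p}=(\g{h}\cap\g{p})\oplus\R H$ is a Lie triple system strictly containing $\ell:=\g{h}\cap\g{p}$, so $L':=\exp_o(\ell')$ is a totally geodesic submanifold strictly containing $L:=\exp_o(\ell)=H\cdot o$. Moreover $[\ell',\ell']\oplus\ell'\subset\g{h}\oplus\R\subsetneq\g{g}$, whence $L'\neq M$ and $L$ is not maximal. Finally, by Proposition~\ref{prop:congruency} all totally geodesic $H$-orbits are congruent in~$M$, and non-maximality is preserved under congruence, so no totally geodesic orbit of the $H$-action is maximal. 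I expect the only genuinely case-dependent point, and hence the main obstacle, to be the non-compactness of the center~$\R$; this is precisely what the identification of $\g{h}\oplus\R$ as a parabolic Levi factor supplies (equivalently, one checks that the real rank of~$\g{h}$ plus one equals the real rank of~$\g{g}$, namely $5+1=6$, $1+1=2$, $6+1=7$, and $2+1=3$ in the four cases, forcing the central direction to contribute to a maximal abelian subspace of~$\g{p}$).
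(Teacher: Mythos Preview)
Your proof is correct and takes a genuinely different route from the paper's. Both arguments reduce to showing that the one-dimensional center~$\g{s}$ of the ambient reductive subalgebra $\g{h}\oplus\g{s}$ lies in~$\g{p}$ rather than in~$\g{k}$, but the methods diverge at this point. The paper argues by contradiction: assuming $\g{s}\subset\g{k}$, it checks in each of the four cases that the compact algebra~$\g{k}$ cannot contain a subalgebra isomorphic to $(\g{h}\cap\g{k})\oplus\R$ (for instance, $\g{sp}_4$ contains no $\g{sp}_2\oplus\g{sp}_2\oplus\R$, $\g{f}_4$ contains no $\g{so}_9\oplus\R$, etc.). You instead observe that $\g{h}\oplus\g{s}$ is the Levi factor of a real maximal parabolic, so that $\g{s}$ is spanned by a grading element whose adjoint action has nonzero real eigenvalues; since elements of~$\g{k}$ act skew-symmetrically for the inner product $-\mathcal{B}_{\g{g}}(\cdot,\theta\cdot)$ and hence with purely imaginary spectrum, this forces $\g{s}\subset\g{p}$ uniformly.

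Your approach is more conceptual and unified, and explains \emph{why} the center is non-compact rather than merely verifying it. The paper's case analysis is more elementary in that it only appeals to subalgebra containments in compact Lie algebras. One point you should make explicit: for the non-split forms $\g{e}_6^{-26}$ and $\g{e}_7^{-25}$, the claim that $\g{so}_{9,1}\oplus\R$ and $\g{e}_6^{-26}\oplus\R$ arise as Levi factors of \emph{real} parabolics (not merely as real forms of complex Levi factors) deserves a sentence of justification---e.g.\ by noting that in the Satake diagram of~$\g{e}_7^{-25}$ the white node adjacent to the $E_6$-subdiagram can be removed, and that the remaining Satake subdiagram is precisely that of~$\g{e}_6^{-26}$ (and similarly for the other case). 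Without this, the phrase ``realized over~$\R$ as'' is carrying the entire case-dependent content. Your concluding real-rank check ($5+1=6$, $1+1=2$, $6+1=7$, $2+1=3$) is consistent with the parabolic picture but is not by itself an independent proof that $\g{s}\subset\g{p}$, since a maximal reductive subalgebra of full complex rank need not contain a maximal $\R$-split torus of~$\g{g}$ in general.
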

\begin{proof}
First note that $\g{h}\oplus\g{s}$, where $\g{s}$ is an abelian 1-dimensional subalgebra of~$\g{g}$, are reductive algebraic subalgebras. This is because their complexifications are maximal subalgebras of maximal rank of $\g{g}_\C$. Hence, they are algebraic by Remark~\ref{rem:alg}. By observing that the root spaces of a simple complex Lie algebra are equal to the complexification of the root spaces of some compact real form, and by using the Borel-de~Siebenthal theorem, see~\cite{boreldesiebenthal}, we deduce that they are regular subalgebras of $\g{g}_\C$. However, regular subalgebras are clearly canonically embedded with respect to some Cartan decomposition of $\g{g}_\C$ and then these are reductive algebraic by Theorem~\ref{th:karpelevichmostow}.

Now, notice that $\g{s}$ is either contained in~$\g{p}$ or in~$\g{k}$. Indeed, $\theta$, the Cartan involution of $\g{g}$, restricted to~$\g{h}\oplus\g{s}$ is a Lie algebra automorphism, hence it maps $\g{s}$ onto~$\g{s}$, since $\g{s}$ is the center of $\g{h}\oplus\g{s}$. However, a one dimensional subspace  invariant under $\theta$ must be either contained in~$\g{p}$ or be contained in~$\g{k}$. Let us prove that $\g{s}$ is contained in~$\g{p}$ in all four cases. We will argue by contradiction, so we assume that $\g{s}\subset\g{k}$.

Let $\g{g}=\g{e}^6_6$, then $\g{k}\simeq \g{sp}_4$. If $\g{h}\simeq \g{so}_{5,5}$, then $\g{h}\cap\g{k}\simeq \g{sp}_2\oplus\g{sp}_2$. However, $\g{sp}_4$ does not contain a subalgebra isomorphic to $\g{sp}_2\oplus\g{sp}_2\oplus\R$, contradicting the assumption $\g{s}\subset\g{k}$.

Let $\g{g}=\g{e}^{-26}_6$, then $\g{k}\simeq \g{f}_4$. If $\g{h}\simeq\g{so}_{9,1}$, then $\g{h}\cap\g{k}\simeq \g{so}_9$. However, $\g{f}_4$ does not contain a subalgebra isomorphic to $\g{so}_9\oplus\R$, which contradicts the assumption $\g{s}\subset\g{k}$.

Let $\g{g}=\g{e}^7_7$, then $\g{k}\simeq \g{su}_8$.  If $\g{h}\simeq \g{e}^6_6$, then $\g{h}\cap \g{k}\simeq \g{sp}_4$. However, $\g{su}_8$ does not contain a subalgebra isomorphic to $\g{sp}_4\oplus\R$, again contradicting $\g{s}\subset\g{k}$.

Finally, let $\g{g}=\g{e}_7^{-25}$, then $\g{k}\simeq \g{e}_6\oplus\R$. Now, if $\g{h}\simeq \g{e}^{-26}_6$, then $\g{h}\cap\g{k}\simeq \g{f}_4$. In addition to that, observe that $\g{f}_4$ is maximal in~$\g{e}_6$, therefore we have that $\g{s}$ is equal to the abelian factor of~$\g{k}$. Moreover, the abelian ideal $\g{s}$ of $\g{h}\oplus\g{s}$ acts trivially on $\g{h}\cap\g{p}$. However, $\g{s}$ corresponds to the abelian factor of the isotropy of a Hermitian symmetric space, therefore it cannot act trivially on any non-trivial subspace of $\g{p}$.

In all these cases, we obtain a contradiction with our assumption $\g{s}\subset\g{k}$. Thus, $\g{s}\subset\g{p}$. Therefore, by Proposition~\ref{prop:congruency}, we have that every totally geodesic orbit induced by $\g{h}$ is properly contained in a  totally geodesic orbit induced by $\g{h}\oplus\g{s}$.
\end{proof}

\begin{theorem}
	\label{th:tablesreal}
	Let $M=G/K$  be a symmetric space of non-compact type where $G$ is an exceptional Lie group whose Lie algebra is absolutely simple. Let $\Sigma\subset M$ be a maximal totally geodesic submanifold. Then $\Sigma$ is isometric to one of the spaces listed in Tables~\ref{table:g2}, \ref{table:f4}, \ref{table:e6}, \ref{table:e7}, \ref{table:e8}. Conversely, every space~$\Sigma$ listed in these tables can be isometrically embedded as a maximal totally geodesic submanifold of~$M$.
\end{theorem}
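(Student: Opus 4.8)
The plan is to combine the correspondence of Section~\ref{sect:Dynkin} with a case-by-case inspection of the candidate list $\widetilde{\mathcal{L}}(\g{g})$ recorded in Table~\ref{table:tildeL(g)}. By Corollary~\ref{cor:correspondence} the isometry classes of maximal semisimple totally geodesic submanifolds of~$M$ are in one-to-one correspondence with $\mathcal{L}(\g{g})$, and by Proposition~\ref{prop:noncompactsubalgebras} we have $\mathcal{L}(\g{g})\subset\widetilde{\mathcal{L}}(\g{g})$. A maximal totally geodesic submanifold is either non-semisimple, in which case it is one of those classified by Berndt, Olmos and Rodr\'{\i}guez~\cite{BO1} and already listed in the tables, or semisimple, in which case it is maximal among the semisimple ones and hence arises from an entry of $\widetilde{\mathcal{L}}(\g{g})$. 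Throughout, we may assume by Theorem~\ref{th:karpelevichalg} that the subalgebras under consideration are canonically embedded with respect to the fixed Cartan decomposition $\g{g}=\g{k}\oplus\g{p}$, so that each entry $\g{h}$ of non-compact type yields the totally geodesic submanifold $\exp_o(\g{h}\cap\g{p})$. The task then reduces to deciding which entries of Table~\ref{table:tildeL(g)} produce a submanifold that is maximal among \emph{all} proper totally geodesic submanifolds, and to identifying the isometry types of those that do.

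First I would eliminate the entries whose associated maximal reductive subalgebra is of the form $\g{g}_1\oplus\g{k}_1$ with a nontrivial compact ideal $\g{k}_1$, the non-compact part $\g{g}_1$ being the entry of the table. The totally geodesic submanifold attached to $\g{g}_1$ is then exactly the orbit $L$ of Lemma~\ref{lemma:nonmaxtg}. When $\g{k}_1$ has rank at least two, Lemma~\ref{lemma:nonmaxtg}~\emph{iv)} exhibits a strictly larger proper totally geodesic submanifold containing $L$, so such entries are discarded. The compact ideals of rank one are all isomorphic to~$\g{su}_2$; those for which this $\g{su}_2$ acts on the normal space as an effective $\mathsf{SO}_3$-representation are precisely the subalgebras collected in Lemma~\ref{lemma:exceptions}, whose proof reads off from Dynkin's characteristic-representation tables that the relevant representations are sums of odd-dimensional irreducibles, so that Lemma~\ref{lemma:nonmaxtg}~\emph{iv)} again applies and these entries are discarded as well.

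Next I would treat the entries whose associated maximal reductive subalgebra has a one-dimensional centre~$\g{s}$, the semisimple part $\g{h}$ being the entry of the table. As $\g{s}$ is $\theta$-invariant it lies either in $\g{k}$ or in $\g{p}$, and for the four cases singled out in Lemma~\ref{lemma:nonsemi} a comparison of $\g{h}\cap\g{k}$ with $\g{k}$ forces $\g{s}\subset\g{p}$; then by Proposition~\ref{prop:congruency} the submanifold attached to $\g{h}$ is properly contained in the non-semisimple totally geodesic submanifold attached to $\g{h}\oplus\g{s}$, and these entries are discarded. In the remaining centre cases one checks instead that $\g{s}\subset\g{k}$, so that $\g{h}$ and $\g{h}\oplus\g{s}$ yield the same semisimple totally geodesic submanifold, which is retained. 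The entries surviving all of these reductions, together with the non-semisimple submanifolds from~\cite{BO1}, give exactly the maximal totally geodesic submanifolds of~$M$; by Theorem~\ref{th:isom} their isometry types are determined by the isomorphism type of the subalgebra together with its Dynkin index, which we read off from Table~\ref{table:tildeL(g)} to obtain the entries of Tables~\ref{table:g2}, \ref{table:f4}, \ref{table:e6}, \ref{table:e7} and~\ref{table:e8}. For the converse, each surviving subalgebra is canonically embeddable by Theorem~\ref{th:karpelevichalg} and hence realized as a totally geodesic orbit through the Correspondence Theorem~\ref{th:correspondence}, while the non-semisimple entries are realized by~\cite{BO1}.

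I expect the principal obstacle to lie in certifying maximality of the surviving entries: for each one must simultaneously rule out any intermediate subalgebra of non-compact type, so that the submanifold is maximal among the semisimple ones, and rule out that it is engulfed by some larger non-semisimple totally geodesic submanifold. The most delicate cases are the rank-one compact factors $\g{su}_2$ that act \emph{faithfully} rather than through $\mathsf{SO}_3$, such as the one in $\g{sp}_{1,2}\oplus\g{su}_2\subset\g{f}_4^4$, where Lemma~\ref{lemma:nonmaxtg} gives no obstruction and maximality must be established directly; the remaining computational burden is to pin down the precise isometry type---equivalently, the metric scaling encoded by the Dynkin index---of each retained submanifold.
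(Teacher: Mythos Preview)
Your overall strategy matches the paper's, but there is a real gap: you never give a positive argument that the \emph{surviving} entries produce maximal totally geodesic submanifolds. Your elimination steps (Lemma~\ref{lemma:nonmaxtg}, Lemma~\ref{lemma:exceptions}, Lemma~\ref{lemma:nonsemi}) only remove entries; they do not certify that what remains is maximal, which is precisely the ``principal obstacle'' you flag at the end. The paper closes this gap with an argument you are missing: it singles out the subset
\[
\widehat{\mathcal{L}}(\g{g}):=\{(\g{h},\ind_D(\g{h}))\in\widetilde{\mathcal{L}}(\g{g}): \g{h}_\C\text{ is maximal reductive in }\g{g}_\C\},
\]
and invokes Lemma~\ref{lemma:complexification}\,\emph{i)} to conclude that each such $\g{h}$ is a maximal \emph{reductive} subalgebra of~$\g{g}$, hence cannot be contained in any proper reductive (in particular any non-semisimple) subalgebra. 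This immediately shows that every entry in $\widehat{\mathcal{L}}(\g{g})$ induces a maximal totally geodesic submanifold, reducing the genuine case work to the complement $\mathcal{C}(\g{g})=\widetilde{\mathcal{L}}(\g{g})\setminus\widehat{\mathcal{L}}(\g{g})$, i.e.\ exactly the entries obtained by deleting a compact ideal or a one-dimensional centre.

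Your treatment of the centre cases is also not how the paper proceeds, and your claim that ``in the remaining centre cases one checks $\g{s}\subset\g{k}$'' is asserted without justification. The paper does not verify this; instead, for $\g{e}_6^2$, $\g{e}_6^{-14}$, $\g{e}_7^{-5}$ it uses the result of~\cite{BO1} that $M$ has \emph{no} non-semisimple maximal totally geodesic submanifolds, so a maximal semisimple one is automatically maximal; for $\g{e}_7^7$ and $\g{e}_7^{-25}$ it checks that the candidate (e.g.\ $\mathsf{E}_6^2/\mathsf{SU}_6\mathsf{Sp}_1$) is not contained in the unique non-semisimple maximal submanifold from~\cite{BO1}. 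Similarly, for the $\g{su}_2$-compact-factor cases that are \emph{retained} (such as $\g{sp}_{1,2}\subset\g{f}_4^4$, $\g{su}^*_6\subset\g{e}_6^6$, $\g{su}_{2,4}\subset\g{e}_6^2$, $\g{so}_{4,8}\subset\g{e}_7^{-5}$, $\g{e}_7^{-5}\subset\g{e}_8^8$, etc.), maximality is established case-by-case by passing to complexifications and using Dynkin's tables to see that $\g{h}_\C$ is not properly contained in the complexification of any other entry of $\widetilde{\mathcal{L}}(\g{g})$ compatible with the given real form, and then checking against the non-semisimple list from~\cite{BO1}. You correctly anticipate this last step, but you should also incorporate the $\widehat{\mathcal{L}}(\g{g})$ argument, which is what makes the remaining case work finite and tractable.
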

\begin{proof}
Let $\Sigma\subset M$ be a maximal totally geodesic submanifold. If $\Sigma$ is a non-semisimple totally geodesic submanifold, it is one of the examples found by Berndt and Olmos~\cite{BO1}, which we include in our tables.

Now assume that $\Sigma$ is semisimple. Then, by Corollary~\ref{cor:correspondence}, there is a corresponding Lie subalgebra $\g{g}_{\Sigma}$ given by ${\mathcal{L}}(\g{g})$ and, by Proposition~\ref{prop:noncompactsubalgebras}, $\g{g}_{\Sigma}$ is isometric to some subalgebra given by $\widetilde{\mathcal{L}}(\g{g})$.

We have to decide which elements in~$\widetilde{\mathcal{L}}(\g{g})$ actually give rise to maximal totally geodesic submanifolds. Let us define
\[\widehat{\mathcal{L}}(\g{g}):=\{ (\g{h},\ind_D(\g{h}))\in\widetilde{\mathcal{L}}(\g{g}): \text{$\g{h}_\C$ is  a maximal reductive subalgebra of $\g{g}_\C$}\}.\]
By Lemma~\ref{lemma:complexification} and the definition of  $\widetilde{\mathcal{L}}(\g{g})$  the subalgebras given by $\widehat{\mathcal{L}}(\g{g})$ are exactly those of non-compact type in~\cite{Degraaf} and~\cite{Komrakov}, i.e.\ not containing compact ideals. Note that since subalgebras in~$\widehat{\mathcal{L}}(\g{g})$ are semisimple, it follows that they are maximal semisimple subalgebras by Lemma~\ref{lemma:complexification} \textit{ii)}.
By Corollary~\ref{cor:correspondence}, every subalgebra given by $\widehat{\mathcal{L}}(\g{g})$ induces a maximal semisimple totally geodesic submanifold of~$M$.
Let us assume that $(\g{g}_{\Sigma},\ind_D(\g{g}_{\Sigma}))\in\widehat{\mathcal{L}}(\g{g})$ does not induce a maximal totally geodesic submanifold. Then $\g{g}_{\Sigma}$ must be contained in a reductive non-semisimple subalgebra. However, $\g{g}_{\Sigma}$ is a maximal reductive subalgebra by Lemma~\ref{lemma:complexification} \textit{i)}, contradicting our assumption.
We have now shown that every subalgebra in~$\widehat{\mathcal{L}}(\g{g})$ induces a maximal totally geodesic submanifold and we include those submanifolds in our tables.

As a consequence, it now remains to deal with the subalgebras in the complement \[\mathcal{C}(\g{g}):=\widetilde{\mathcal{L}}(\g{g})\setminus\widehat{\mathcal{L}}(\g{g}).\] We consider each case separately. Note that every subalgebra in $\mathcal{C}(\g{g})$ is obtained from some larger subalgebra of~$\g{g}$ which contains a compact ideal, see Table~\ref{table:tildeL(g)}.

Let $\g{g}=\g{g}^2_2$ and $M=\mathsf{G}_2^2/\mathsf{SO}_4$. In this case we are done since $\mathcal{C}(\g{g})=\emptyset$.

Let $\g{g}=\g{f}^4_4$ and $M=\mathsf{F}^4_4/\mathsf{Sp}_3\mathsf{Sp}_1$.
In this case $\mathcal{C}(\g{g})=\{(\g{g}^2_2,1),(\g{su}_{1,2},2),(\g{sp}_{1,2},1)\}$. Moreover, observe that every subalgebra isomorphic to $\g{sp}_{1,2}$ is a maximal non-compact type subalgebra of  $\g{f}^4_4$. Indeed, if $\g{h}\simeq \g{sp}_{1,2}$, then $\g{h}_{\C}\simeq \g{sp}_3(\C)$. We can deduce by~\cite[Table 25, p.~199]{Dynkin} that $\g{sl}_2(\C)\oplus\g{sp}_3(\C)$ is the only proper semisimple subalgebra of $\g{f}_4(\C)$ in which $\g{h}_{\C}$ is properly contained. However, according to Table~\ref{table:tildeL(g)}, the only real forms in~$\g{g}$ for the embedding $\g{sl}_2(\C)\oplus \g{sp}_3(\C)\subset \g{f}_4(\C)$ are $\g{sl}_2(\R)\oplus \g{sp}_{3}(\R)$ and $\g{su}_2\oplus \g{sp}_{1,2}$. Thus, $\g{sp}_{1,2}$ is a maximal non-compact type subalgebra of~$\g{f}^4_4$. Then, the corresponding totally geodesic orbit is a maximal semisimple totally geodesic submanifold, which is maximal, since $M$ does not have non-semisimple maximal totally geodesic submanifolds by~\cite{BO1}.
Furthermore, by~Table~\ref{table:tildeL(g)}, a subalgebra isometric to $(\g{su}_{1,2},2)$  is contained in $\g{su}_3\oplus\g{su}_{1,2}$ or $\g{su}_{1,2}\oplus\g{su}_{1,2}$. Then in the former case, by Lemma~\ref{lemma:nonmaxtg} \emph{iv}), the corresponding totally geodesic orbit is not maximal, since $\rank(\g{su}_3)=2$ and in the latter case is obviously not maximal. The  totally geodesic orbit corresponding to $(\g{g}^2_2,1)$ is not maximal by Lemma~\ref{lemma:exceptions}.

Let $\g{g}=\g{f}^{-20}_4$ and $M=\mathsf{F}^{-20}_4/\mathsf{Spin}_9$. In this case  $\mathcal{C}(\g{g})=\{(\g{sp}_{1,2},1),(\g{su}_{1,2},2),(\g{sl}_2(\R),8)  \}$. Consider a subalgebra isometric to $(\g{sp}_{1,2},1)$.  Now, since its complexification cannot be embedded as a subalgebra of the complexification of any other subalgebra in~$\widetilde{\mathcal{L}}(\g{g})$, we have that $(\g{sp}_{1,2},1)\in\mathcal{L}(\g{g})$. Furthermore, by~\cite{BO1}, there are no non-semisimple maximal totally geodesic submanifolds in~$M$, so $(\g{sp}_{1,2},1)$ induces a maximal totally geodesic submanifold. On the other hand,  by Table~\ref{table:tildeL(g)}, every subalgebra isometric to $(\g{su}_{1,2},2)$ or $(\g{sl}_2(\R),8)$ is such that it can be embedded in the following subalgebras of $\g{g}$:
\[ \g{su}_{1,2}\subset \g{su}_3\oplus\g{su}_{1,2}, \quad \g{sl}_2(\R)\subset\g{sl}_2(\R)\oplus\g{g}_2.  \]
Thus, by Lemma~\ref{lemma:nonmaxtg} \emph{iv}), the corresponding totally geodesic orbits to these subalgebras are not maximal totally geodesic submanifolds.
	
Let $\g{g}=\g{e}^6_6$ and $M=\mathsf{E}^6_6/\mathsf{Sp}_4$. In this case $\mathcal{C}(\g{g}):=\{(\g{so}_{5,5},1),(\g{su}^*_{6},1)\}$.	Every subalgebra isometric to $(\g{su}^*_6,1)$ is a  maximal non-compact type subalgebra of $\g{g}$ since its complexification is not contained in the complexification of any other subalgebra of $\widetilde{\mathcal{L}}(\g{g})$ except for $\g{sl}_6(\R)$, which clearly does not contain~$\g{su}^*_6$. Thus, $(\g{su}^*_6,1)$  induces a maximal totally geodesic submanifold since the corresponding totally geodesic orbit is not contained in a non-semisimple totally geodesic submanifold in the list given by~\cite{BO1}. By Lemma~\ref{lemma:nonsemi}, the totally geodesic orbits corresponding to $\g{so}_{5,5}$ are not maximal totally geodesic submanifolds in~$M$.

Let $\g{g}=\g{e}_6^2$ and $M=\mathsf{E}^2_6/\mathsf{SU}_6\mathsf{Sp}_1$. In this case
 \[\mathcal{C}(\g{g}):=\{(\g{so}_{4,6},1),(\g{so}^*_{10},1),(\g{su}_{1,2},1),(\g{su}_{2,4},1),(\g{g}^2_2,1)\}.\]
By Table~\ref{table:tildeL(g)}, we have the following embeddings into subalgebras of $\g{g}$:
\[  \g{su}_{1,2}\subset \g{su}_{3}\oplus \g{su}_{3}\oplus\g{su}_{1,2}, \quad \g{g}^2_2\subset\g{su}_3\oplus\g{g}^2_2.   \]
Thus, by Lemma~\ref{lemma:nonmaxtg} \emph{iv}), we have that $(\g{su}_{1,2},1),(\g{g}^2_2,1)$ do not induce maximal totally geodesic submanifolds.
Moreover, the complexification of~$\g{su}_{2,4}$ cannot be contained in the complexification of any other subalgebra in~$\widetilde{\mathcal{L}}(\g{g})$ except for $(\g{sl}_2(\R)\oplus\g{su}_{3,3},(1,1))$, which clearly does not contain~$\g{su}_{2,4}$. Therefore, it induces a maximal totally geodesic submanifold since there are no non-semisimple maximal totally geodesic submanifolds in~$M$ according to ~\cite{BO1}. It follows from Table~\ref{table:tildeL(g)} that $(\g{so}_{4,6},1)$ and $(\g{so}^*_{10},1)$ can be properly contained in just one proper reductive subalgebra of~$\g{g}$. Namely,
\[\g{so}_{4,6}\subset \g{so}_{4,6}\oplus\R\qquad \g{so}^*_{10}\subset \g{so}^*_{10}\oplus\R.   \]
Thus, both subalgebras induce maximal totally geodesic submanifolds since there are no non-semisimple maximal totally geodesic submanifolds in~$M$.

Let $\g{g}=\g{e}^{-14}_6$ and $M=\mathsf{E}^{-14}_6/\mathsf{Spin}_{10}\mathsf{U}_1$. In this case \[\mathcal{C}(\g{g})=\{ (\g{so}^*_{10},1), (\g{so}_{2,8},1),(\g{su}_{1,2}\oplus\g{su}_{1,2},(1,1)),(\g{su}_{2,4},1),(\g{su}_{1,2},2)  \}.\]
Moreover, $(\g{su}_{2,4},1)$ is such that its complexification cannot be embedded as a subalgebra of the complexification of any other subalgebra in~$\widetilde{\mathcal{L}}(\g{g})$. Thus, $(\g{su}_{2,4},1)$ induces a maximal totally geodesic submanifold, since there are no non-semisimple maximal totally geodesic submanifolds in~$M$ according to~\cite{BO1}. Furthermore, by Lemma~\ref{lemma:nonmaxtg} \emph{iv}), we have that $(\g{su}_{1,2}\oplus\g{su}_{1,2},(1,1))$ and $(\g{su}_{1,2},2)$ do not induce maximal totally geodesic submanifolds since by Table~\ref{table:tildeL(g)}, we have the following inclusions into subalgebras of $\g{g}$:
\[\g{su}_{1,2}\oplus\g{su}_{1,2}\subset \g{su}_3\oplus\g{su}_{1,2}\oplus\g{su}_{1,2} \qquad \g{su}_{1,2}\subset\g{su}_{1,2}\oplus \g{g}_2.  \]
 Furthermore, by  Table~\ref{table:tildeL(g)}, $(\g{so}_{2,8},1)$ and $(\g{so}^*_{10},1)$ can be properly contained in just one proper reductive subalgebra of~$\g{g}$. Namely,
\[\g{so}_{2,8}\subset \g{so}_{2,8}\oplus\R\qquad \g{so}^*_{10}\subset \g{so}^*_{10}\oplus\R.   \]
However, according to~\cite{BO1}, there are no non-semisimple maximal totally geodesic submanifolds in~$M$. Thus, both subalgebras induce maximal totally geodesic submanifolds.

Let $\g{g}=\g{e}_6^{-26}$ and $M=\mathsf{E}^{-26}_6/\mathsf{F}_4$. In this case
\[ \mathcal{C}(\g{g})=\{(\g{so}_{1,9},1),(\g{sl}_3(\C),(1,1)),(\g{su}^*_6,1),(\g{sl}_3(\R),2)   \}. \]
Note that $(\g{su}^*_6,1)$ is such that its complexification cannot be embedded as a subalgebra of the complexification of any other subalgebra in~$\widetilde{\mathcal{L}}(\g{g})$. Thus, $(\g{su}^*_6,1)$ induces a maximal totally geodesic submanifold, since there is just one  non-semisimple maximal totally geodesic submanifold in the list given by~\cite{BO1}, which is $\R\times\mathsf{SO}^0_{1,9}/\mathsf{SO}_9$, and it clearly does not contain a totally geodesic $\mathsf{SU}^*_6/\mathsf{Sp}_3$. Furthermore, by Lemma~\ref{lemma:nonmaxtg} \emph{iv)}, we have that $(\g{sl}_3(\C),(1,1))$ and $(\g{sl}_3(\R),2)$ do not induce maximal totally geodesic submanifolds, since by Table~\ref{table:tildeL(g)}, we have the following inclusions into subalgebras of $\g{g}$:
\[ \g{sl}_3(\C)\subset\g{sl}_3(\C)\oplus\g{su}_3, \quad \g{sl}_3(\R)\subset \g{sl}_3(\R)\oplus\g{g}_2.  \]
Now by Lemma~\ref{lemma:nonsemi}, $(\g{so}_{1,9},1)$ does not induce maximal totally geodesic submanifolds.

Let $\g{g}=\g{e}^7_7$ and $M=\mathsf{E}^7_7/\mathsf{SU}_8$. In this case \[\mathcal{C}(\g{g})=\{(\g{e}^6_6,1),(\g{e}^2_6,1),(\g{so}^*_{12},1),(\g{su}_{1,5},1),(\g{sl}_2(\R),7)\}.\]
By Table~\ref{table:tildeL(g)}, we have the following inclusions into subalgebras of $\g{g}$:
\[\g{su}_{1,5}\subset \g{su}_3\oplus\g{su}_{1,5},\qquad \g{sl}_2(\R)\subset \g{sl}_2(\R)\oplus\g{g}_2.\]
Thus, by Lemma~\ref{lemma:nonmaxtg} \emph{iv)}, we have that $(\g{su}_{1,5},1)$ and $(\g{sl}_2(\R),7)$ do not induce maximal totally geodesic submanifolds. Furthermore, $(\g{e}^2_6,1)$ is such that its complexification is not contained in the complexification of any other subalgebra in~$\widetilde{\mathcal{L}}(\g{g})$ except for $\g{e}^{6}_6$, which clearly does not contain~$\g{e}^2_6$. Thus, $(\g{e}^2_6,1)$ gives a maximal totally geodesic submanifold, since by~\cite{BO1}, the only non-semisimple maximal totally geodesic submanifold is $\R\times \mathsf{E}^6_6/\mathsf{Sp}_4$, which cannot contain a totally geodesic $\mathsf{E}^2_6/\mathsf{SU}_6\mathsf{Sp}_1$. In addition to that, $(\g{so}^*_{12},1)$ is such that its complexification is not contained in the complexification of any  other subalgebra in~$\widetilde{\mathcal{L}}(\g{g})$ except for $\g{sl}_2(\R)\oplus\g{so}_{6,6}$, which clearly does not contain~$\g{so}^*_{12}$. Consequently, $(\g{so}^*_{12},1)$ induces a maximal totally geodesic submanifold in~$M$, since its corresponding totally geodesic submanifold cannot be totally geodesically embedded in $\R\times \mathsf{E}^6_6/\mathsf{Sp}_4$.   Finally, by Lemma~\ref{lemma:nonsemi}, we have that $\g{e}^6_6$ does not induce a maximal totally geodesic submanifold.
	
Let $\g{g}=\g{e}^{-25}_7$ and $M=\mathsf{E}^{-25}_7/\mathsf{E}_6\mathsf{U}_1$. In this case, we have that
\[\mathcal{C}(\g{g}):=\{(\g{e}^{-26}_6,1),(\g{e}^{-14}_6,1),(\g{so}^*_{12},1),(\g{su}_{3,3},1),(\g{sp}_3(\R),1),(\g{sl}_2(\R),3)  \}.  \]
Notice that $\g{e}^{-14}_6$ is such that its complexification is not contained in the complexification of any other subalgebra in~$\widetilde{\mathcal{L}}(\g{g})$ but $\g{e}^{-26}_6$ and clearly it is not contained in this one. Moreover, by~\cite{BO1}, the only non-semisimple maximal totally geodesic submanifold in~$M$ is $\R\times\mathsf{E}^{-26}_6/\mathsf{F}_{4}$, which cannot contain a totally geodesic $\mathsf{E}^{-14}_6/\mathsf{Spin}_{10}\mathsf{U}_1$.  This implies that $(\g{e}^{-14}_6,1)$ induces a maximal totally geodesic submanifold of~$M$. Furthermore, by Lemma~\ref{lemma:nonsemi}, $(\g{e}^{-26}_6,1)$ cannot induce a maximal totally geodesic submanifold of~$M$. In addition to that, $(\g{so}^*_{12},1)$ is such that its complexification is not contained in the complexification of any other subalgebra in~$\widetilde{\mathcal{L}}(\g{g})$ except for $\g{sl}_2(\R)\oplus\g{so}_{2,10}$, which clearly does not contain~$(\g{so}^*_{12},1)$. Thus, $(\g{so}^*_{12},1)$ induces a maximal totally geodesic submanifold, since it is not totally geodesic embedded in $\R\times\mathsf{E}^{-26}_6/\mathsf{F}_{4}$. Now, we have by Lemma~\ref{lemma:nonmaxtg} \emph{iv)} that $(\g{su}_{3,3},1)$, $(\g{sp}_3(\R),1)$ and $(\g{sl}_2(\R),3)$ cannot give totally geodesic submanifolds since by Table~\ref{table:tildeL(g)} they are contained in the following subalgebras of~$\g{g}$:
\[\g{sl}_2(\R)\subset\g{sl}_2(\R)\oplus\g{f}_4,\quad \g{su}_{3,3}\subset\g{su}_3\oplus\g{su}_{3,3},\quad \g{sp}_3(\R)\subset\g{sp}_3(\R)\oplus\g{g}_2.   \]

Let $\g{g}=\g{e}^{-5}_7$ and $M=\mathsf{E}^{-5}_7/\mathsf{SO}_{12}\mathsf{Sp}_1$. In this case we have
\begin{align*}
\mathcal{C}(\g{g})=\{&(\g{e}^2_6,1),(\g{e}^{-14}_6,1),(\g{so}_{4,8},1),(\g{su}_{2,4},1),(\g{su}_{1,2},1),(\g{sl}_2(\R),24),(\g{g}^2_2,2),(\g{g}^2_2,1),(\g{sp}_{1,2},1),\\
&(\g{f}^4_4,1),(\g{f}^{-20}_4,1) \}.
\end{align*}
Note that $(\g{e}^2_6,1)$ and $(\g{e}^{-14}_6,1)$ are such that their complexifications are not contained in the complexification of any other subalgebra in~$\widetilde{\mathcal{L}}(\g{g})$ except for each other. However, $\g{e}^2_6$ cannot be contained in~$\g{e}^{-14}_6$ and viceversa. By~\cite{BO1}, there are no non-semisimple maximal totally geodesic submanifolds in~$M$. Then  $(\g{e}^2_6,1)$ and $(\g{e}^{-14}_6,1)$ induce maximal totally geodesic submanifolds in~$M$. In addition to that, $(\g{so}_{4,8},1)$ is such that its complexification is not contained in the complexification of any other subalgebra in~$\widetilde{\mathcal{L}}(\g{g})$ except for $\g{sl}_2(\R)\oplus\g{so}^*_{12}$, which clearly cannot contain~$\g{so}_{4,8}$. Thus, $(\g{so}_{4,8},1)$ induces a maximal totally geodesic submanifold in~$M$. Furthermore, by Table~\ref{table:tildeL(g)}, we have the following inclusions into subalgebras of $\g{g}$:
\[ \g{su}_{2,4}\subset\g{su}_{3}\oplus\g{su}_{2,4}, \quad\g{su}_{1,2}\subset\g{su}_{1,2}\oplus\g{su}_{6},\quad \g{g}^2_2\subset \g{sp}_3\oplus\g{g}^2_2,\quad \g{sp}_{1,2}\subset\g{sp}_{1,2}\oplus\g{g}_2.\]
Therefore, by Lemma~\ref{lemma:nonmaxtg} \emph{iv}), we have that  $(\g{su}_{2,4},1)$, $(\g{su}_{1,2},1)$, $(\g{g}^2_2,1)$ and $(\g{sp}_{1,2},1)$ do not induce maximal totally geodesic submanifolds. Now by Lemma~\ref{lemma:exceptions}, we have that $(\g{sl}_2(\R),24)$, $ (\g{g}^2_2,2)$, $(\g{f}^4_4,1)$ and $(\g{f}^{-20}_4,1)$ do not induce maximal totally geodesic submanifolds.

Let $\g{g}=\g{e}^8_8$ and $M=\mathsf{E}^8_8/\mathsf{SO}_{16}$. In this case \[\mathcal{C}(\g{g})=\{(\g{su}_{1,4},1), (\g{e}^{-14}_6,1),(\g{e}^{-5}_7,1),(\g{su}_{1,2},6),(\g{f}^{-20}_4,1),(\g{sl}_2(\R),16)   \}.\]
Notice that $\g{e}^{-5}_7$ cannot be contained in any other subalgebra in~$\widetilde{\mathcal{L}}(\g{g})$. However, by~\cite{BO1}, there is no non-semisimple maximal totally geodesic submanifold in~$M$, which implies that $(\g{e}^{-5}_7,1)$ induces a maximal totally geodesic submanifold. Now, by Lemma~\ref{lemma:exceptions}, we have that $(\g{su}_{1,2},6)$ does not induce a maximal totally geodesic submanifold in~$M$. Furthermore, by Table~\ref{table:tildeL(g)}, we have the following inclusions into subalgebras of $\g{g}$:
\[ \g{su}_{1,4}\subset\g{su}_{1,4}\oplus\g{su}_5,\quad \g{e}^{-14}_6\subset \g{e}^{-14}_6\oplus\g{su}_3,\quad \g{f}^{-20}_4\subset\g{f}^{-20}_4\oplus \g{g}_2,\quad \g{sl}_2(\R)\subset\g{sl}_2(\R)\oplus \g{su}_3.   \]
Thus, by Lemma~\ref{lemma:nonmaxtg} \emph{iv)}, we have that $(\g{su}_{1,4},1)$, $(\g{e}^{-14}_6,1)$, $(\g{f}^{-20}_4,1)$ and $(\g{sl}_2(\R),16)$ do not induce maximal totally geodesic submanifolds in~$M$.

Let $\g{g}=\g{e}^{-24}_8$ and $M=\mathsf{E}^{-24}_8/\mathsf{E}_7\mathsf{Sp}_1$. In this case
\[\mathcal{C}(\g{g})=\{(\g{su}_{2,3},1),(\g{e}^2_6,1),(\g{su}_{1,2},1),(\g{e}^{-5}_7,1),(\g{sl}_3(\R),6),(\g{f}^4_4,1),(\g{g}^2_2,1),(\g{g}_2(\C),(1,1))    \}.\]
Notice that $\g{e}^{-5}_7$ cannot be contained in any other subalgebra in~$\widetilde{\mathcal{L}}(\g{g})$. However, by~\cite{BO1}, there is no non-semisimple maximal totally geodesic submanifold in~$M$, which implies that $(\g{e}^{-5}_7,1)$ induces a maximal totally geodesic submanifold. Now by Lemma~\ref{lemma:exceptions}, we have that $(\g{sl}_3(\R),6)$ and $(\g{g}_2(\C),(1,1))$ do not induce maximal totally geodesic submanifolds. Furthermore, by Table~\ref{table:tildeL(g)}, we have the following embeddings into subalgebras of $\g{g}$
\[\g{su}_{2,3}\subset\g{su}_5\oplus\g{su}_{2,3},\quad \g{e}^2_6\subset\g{su}_3\oplus\g{e}^2_6, \quad \g{su}_{1,2}\subset\g{su}_{1,2}\oplus\g{e}_6, \quad \g{f}^4_4\subset \g{f}^4_4\oplus\g{g}_2, \quad \g{g}^2_2\subset \g{f}_4\oplus\g{g}^2_{2}.       \]
Thus, by Lemma~\ref{lemma:nonmaxtg} \emph{iv)}, we have that $(\g{su}_{2,3},1)$, $(\g{e}^2_6,1)$, $(\g{su}_{1,2},1)$, $(\g{f}^4_4,1)$ and $(\g{g}^2_{2},1)$ do not induce maximal totally geodesic submanifolds. This concludes the proof.
\end{proof}

\section{Totally geodesic submanifolds in\\ exceptional symmetric spaces of type~IV}
\label{sect:totgeodcomplex}
In this section we will classify maximal totally geodesic submanifolds in exceptional symmetric spaces with complex isometry group. By duality this is equivalent to classifying maximal totally geodesic submanifolds in exceptional compact Lie groups.

Let $G/K$ be a  symmetric space
of compact type and $\sigma\in\Aut(G)$ be an involutive automorphism of~$G$ such that $\mathrm{Fix}^0(\sigma)\subset K\subset \mathrm{Fix}(\sigma)$, where $\mathrm{Fix}(\sigma)$ is the subset of~$G$ which is fixed by~$\sigma$ and $\mathrm{Fix}^0(\sigma)$ is its identity component. The Cartan embedding of~$G/K$ into~$G$ is the smooth map~$f$ given by
\[f\colon G/K \rightarrow G,\qquad gK \mapsto \sigma(g)g.\]

It was shown in \cite{Ikawa} that a totally geodesic submanifold in a compact Lie group is maximal if and only if it is a maximal subgroup or a Cartan embedding. However, in this section we will give an explicit list of all maximal totally geodesic submanifolds in exceptional symmetric spaces with complex isometry group.

Maximal semisimple regular subalgebras of~$\g{g}$ and maximal S-subalgebras can be found in~\cite{Dynkin} as mentioned above. Real forms are well known, see e.g.~\cite{helgason}. Therefore, by Lemma~\ref{lemma:maximalcomplexsemisimple}, one can obtain the set $\mathcal{L}(\g{g})$ for $\g{g}$ a realification of a exceptional simple complex Lie algebra, see Definition~\ref{def:Ltilde}; it is given in Table~\ref{table:L(g)complex}.

\begin{lemma}
	\label{lemma:nonsemi-complex}
Let $\g{g}$ be equal to $\g{e}_6(\C)$  or $\g{e}_7(\C)$ and let $\g{h}$ be one of the following two subalgebras of $\g{g}$:
	\[
		\g{so}_{10}(\C)  \subset \g{e}_6(\C), \qquad\;  \g{e}_6(\C) \subset \g{e}_7(\C).
\]
	Let $G$ be the simply connected Lie group with Lie algebra~$\g{g}$ and let $K$ be a maximal compact subgroup of~$G$. Assume the Lie algebra~$\g{h}$ is canonically embedded into~$\g{g}$ with respect to the Cartan decomposition~$\g{g} = \g{k} \oplus \g{p}$. Let $H$ be the connected closed subgroup of~$G$ with Lie algebra~$\g{h}$. Then a totally geodesic orbit of the $H$-action on the symmetric space~$M = G/K$ is not maximal.
\end{lemma}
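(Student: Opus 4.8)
The plan is to mirror the argument of Lemma~\ref{lemma:nonsemi}, exhibiting in each case a reductive subalgebra $\g{h}\oplus\g{s}$ that properly contains $\g{h}$ and whose totally geodesic orbit properly contains that of $\g{h}$. The crucial simplification in the complex (type~IV) setting is that the position of the central subalgebra $\g{s}$ relative to the Cartan decomposition is forced automatically, so the case-by-case contradiction argument of Lemma~\ref{lemma:nonsemi} becomes unnecessary.

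First I would fix, in each of the two cases, a complex one-dimensional abelian subalgebra $\g{s}$ so that $\g{h}\oplus\g{s}$ is the maximal-rank regular reductive (Levi) subalgebra $\g{so}_{10}(\C)\oplus\C\subset\g{e}_6(\C)$, respectively $\g{e}_6(\C)\oplus\C\subset\g{e}_7(\C)$, obtained by deleting one node of the Dynkin diagram. As in the proof of Lemma~\ref{lemma:nonsemi}, such a regular subalgebra is reductive algebraic, so by the Karpelevich--Mostow Theorem~\ref{th:karpelevichmostow} I may assume $\g{h}\oplus\g{s}$ is canonically embedded with respect to the Cartan decomposition $\g{g}=\g{k}\oplus\g{p}$. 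Since $\g{s}$ is the center of $\g{h}\oplus\g{s}$ and $\g{h}$ its semisimple part, both are preserved by $\theta$; in particular $\g{s}=(\g{s}\cap\g{k})\oplus(\g{s}\cap\g{p})$.

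The key step is to locate $\g{s}\cap\g{p}$. Because $M$ is of type~IV, $\g{g}$ is the realification of the complex Lie algebra $\g{l}\in\{\g{e}_6(\C),\g{e}_7(\C)\}$, the Cartan decomposition reads $\g{g}=\g{k}\oplus i\g{k}$ with $\g{k}$ a compact real form and $\g{p}=i\g{k}$, and $\theta$ is complex conjugation with respect to $\g{k}$, hence $\C$-antilinear. Consequently $\g{k}$ is totally real, that is $i\g{k}\cap\g{k}=\g{p}\cap\g{k}=0$, so $\g{k}$ contains no complex line, while multiplication by $i$ interchanges $\g{k}$ and $\g{p}$. As $\g{s}$ is a complex line it satisfies $i\g{s}=\g{s}$, so any nonzero $v\in\g{s}\cap\g{k}$ gives $iv\in\g{s}\cap\g{p}$ and conversely, forcing $\dim_\R(\g{s}\cap\g{k})=\dim_\R(\g{s}\cap\g{p})=1$. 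Thus $\g{s}\cap\g{p}\neq0$ is automatic; this is exactly the point where the real argument needed a separate contradiction for each algebra, and here it comes for free from the antilinearity of $\theta$.

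Finally I would conclude as in Lemma~\ref{lemma:nonsemi}: since $\g{h}\oplus\g{s}$ is $\theta$-invariant, $(\g{h}\oplus\g{s})\cap\g{p}=(\g{h}\cap\g{p})\oplus(\g{s}\cap\g{p})$ strictly contains $\g{h}\cap\g{p}$, so the totally geodesic orbit of $\g{h}\oplus\g{s}$ through $o$ is a proper totally geodesic submanifold (proper since $\g{h}\oplus\g{s}\neq\g{g}$) strictly containing the $H$-orbit through $o$. By Proposition~\ref{prop:congruency} all totally geodesic $H$-orbits are congruent, so every such orbit is properly contained in a totally geodesic $(\g{h}\oplus\g{s})$-orbit and hence is not maximal. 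The only genuine bookkeeping is checking that $\g{so}_{10}(\C)\oplus\C$ and $\g{e}_6(\C)\oplus\C$ are honest proper reductive maximal-rank subalgebras; the positioning of $\g{s}$, which was the main obstacle in the real case of Lemma~\ref{lemma:nonsemi}, is immediate here, so I do not expect any serious difficulty.
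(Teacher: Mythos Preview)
Your proposal is correct and follows the same overall structure as the paper's proof: enlarge $\g{h}$ to the regular reductive subalgebra $\g{h}\oplus\g{s}$ with $\g{s}$ a complex one-dimensional center, observe that $\theta$ preserves $\g{s}$, show $\g{s}\cap\g{p}\neq 0$, and conclude via Proposition~\ref{prop:congruency}.

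The one genuine difference is in how $\g{s}\cap\g{p}\neq 0$ is established. The paper argues by contradiction: if $\g{s}\subset\g{k}$, then $(\g{h}\oplus\g{s})\cap\g{k}$ would contain $\g{so}_{10}\oplus\R^2$ (respectively $\g{e}_6\oplus\R^2$), which has rank $7$ (respectively $8$), exceeding the rank of $\g{k}=\g{e}_6$ (respectively $\g{e}_7$). Your argument is more direct and conceptual: since $\theta$ is complex conjugation with respect to $\g{k}$, it is $\C$-antilinear, so multiplication by~$i$ interchanges the $\pm 1$-eigenspaces of $\theta$ on the $\theta$-stable complex line~$\g{s}$, forcing $\dim_\R(\g{s}\cap\g{k})=\dim_\R(\g{s}\cap\g{p})=1$. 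This bypasses the case-by-case rank check entirely and would apply uniformly to any complex central factor in any type~IV space, whereas the paper's rank argument happens to work here because the compact real forms $\g{so}_{10}\subset\g{e}_6$ and $\g{e}_6\subset\g{e}_7$ have corank exactly one. Both arguments are valid; yours is the cleaner one.
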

\begin{proof}
By~\cite{Dynkin}, there is a reductive subalgebra isomorphic to $\g{h}\oplus\g{s}$, where $\g{s}$ is a $1$-dimensional complex subalgebra of~$\g{g}$, which is canonically embedded with respect to the Cartan decomposition of $\g{g}=\g{k}\oplus\g{p}$ by the same kind of argument as in the first paragraph of the proof of Lemma~\ref{lemma:nonsemi}.  Let $\Sigma$ be the totally geodesic orbit $H\cdot o$, where $H$ is the connected subgroup of~$G$ with Lie algebra $\g{h}$.
Moreover, $\theta$ leaves $\g{h}\oplus\g{s}$ invariant and maps $\g{s}$ onto itself, since it is the center of~$\g{h}\oplus\g{s}$ and $\theta$ is a Lie algebra automorphism. Thus $\g{s}$ is canonically embedded in~$\g{g}$ with respect to $\g{g}=\g{k}\oplus\g{p}$, implying that $\g{s}=(\g{k}\cap\g{s})\oplus(\g{p}\cap\g{s})$.

If $\dim_\R (\g{p}\cap \g{s})\neq0$, then $\Sigma$ is properly contained in a proper totally geodesic submanifold of $M$.
If $\dim_\R (\g{p}\cap \g{s})=0$, then $\g{s}$ has dimension two and the rank of $(\g{s}\oplus\g{h})\cap \g{k}$ is bigger than the rank of $\g{k}$, which leads to a contradiction in both cases. Indeed, if $\g{h}\simeq\g{so}_{10}(\C)$, then $\g{h}\cap \g{k}\simeq\g{so}_{10}$ and $\g{k}\simeq\g{e}_6$, and if $\g{h}\simeq\g{e}_6(\C)$, then $\g{h}\cap\g{k}\simeq\g{e}_6$ and $\g{k}\simeq\g{e}_7$.\qedhere
\end{proof}

\begin{table}[h]\caption{$\mathcal{L}(\g{g})$ for each exceptional simple complex Lie algebra~$\g{g}$. Notice that we indicate the different isometry classes for a given isomorphism class of a subalgebra by writing all their possible Dynkin indices separated by commas.}\label{table:L(g)complex}
	\begin{tabular}{cl}	
		\hline
		$\g{g}_2(\C)$ & 	$(\g{sl}_2(\C)\oplus\g{sl}_2(\C),(3,1)),(\g{sl}_2(\C),28), (\g{sl}_3(\C),1),(\g{g}_2^2,1)$
		\\ \hline
		\multirow{2}{*}{$\g{f}_4(\C)$} & 	$(\g{sl}_2(\C)\oplus\g{sp}_3(\C),(1,1)), (\g{sl}_3(\C)\oplus\g{sl}_3(\C),(1,2)),(\g{sl}_2(\C),156),(\g{so}_9(\C),1),$\\
		&$(\g{sl}_2(\C)\oplus\g{g}_2(\C),(8,1)),(\g{f}_4^{-20},1),(\g{f}_4^{4},1)$
		\\ \hline
		\multirow{3}{*}{$\g{e}_6(\C)$}	
		&
		$(\g{sl}_3(\C),9),(\g{g}_2(\C),3),(\g{sl}_3(\C)\oplus\g{g}_2(\C),(2,1)),(\g{sp}_4(\C),1),(\g{f}_4(\C),1),$	
		\\
		& $ (\g{so}_{10}(\C),1),(\g{sl}_3(\C)\oplus\g{sl}_3(\C)\oplus\g{sl}_3(\C),(1,1,1)),(\g{sl}_2(\C)\oplus\g{sl}_6(\C),(1,1)),$
		\\
		&$(\g{e}_6^6,1),(\g{e}_6^2,1),(\g{e}_6^{-26},1),(\g{e}_6^{-14},1)$
		\\ \hline
		\multirow{3}{*}{$\g{e}_7(\C)$}	
		&
		$(\g{sl}_2(\C),231,399),(\g{sl}_3(\C),21),(\g{sl}_2(\C)\oplus\g{sl}_2(\C),(15,24)),(\g{sl}_2(\C)\oplus\g{g}_2(\C),(7,2)), $	
		\\
		
		& $(\g{sp}_3(\C)\oplus \g{g}_2(\C),(1,1)),(\g{e}_6(\C),1), (\g{sl}_{2}(\C)\oplus\g{so}_{12}(\C),(1,1)),(\g{sl}_8(\C),1),$
		\\
		&  $(\g{sl}_{3}(\C)\oplus\g{sl}_6(\C),(1,1)),(\g{sl}_2(\C)\oplus\g{f}_4(\C),(3,1)),(\g{e}_7^{-5},1),(\g{e}_7^{7},1),(\g{e}_7^{-25},1)$
		\\ \hline
		\multirow{3}{*}{$\g{e}_8(\C)$}
		
		&
		$(\g{sl}_2(\C),520,760,1240),(\g{so}_5(\C),12),(\g{sl}_2(\C)\oplus\g{sl}_3(\C),(16,6)),$
		\\
		& $(\g{f}_4(\C)\oplus\g{g}_2(\C),(1,1)),(\g{so}_{16}(\C),1),(\g{sl}_9(\C),1),(\g{sl}_5(\C)\oplus\g{sl}_5(\C),(1,1)),$
		
		\\
		& $(\g{sl}_3(\C)\oplus\g{e}_6(\C),(1,1)),(\g{sl}_2(\C)\oplus\g{e}_7(\C),(1,1)),(\g{e}^8_8,1),(\g{e}_8^{-24},1)$\\ \hline
	\end{tabular}
	
\end{table}
\begin{theorem}
	\label{th:tablescomplex}
	Let $M=G/K$  be a symmetric space with $G$ an exceptional simple complex Lie group. Let $\Sigma$ be a maximal totally geodesic submanifold of $M$. Then $\Sigma$ is isometric to one of the spaces listed in Tables~\ref{table:g2}, \ref{table:f4}, \ref{table:e6}, \ref{table:e7}, \ref{table:e8}. Conversely, every space listed in these tables can be isometrically embedded as a maximal totally geodesic submanifold of~$M$.
\end{theorem}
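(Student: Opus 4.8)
The plan is to follow the strategy used for Theorem~\ref{th:tablesreal}, which becomes noticeably cleaner in the complex case because the relevant list of subalgebras is already complete. Since $G$ is complex simple and $K$ is maximal compact, $M=G/K$ is an irreducible symmetric space of non-compact type, so all the machinery of \S~\ref{sect:maximalsemi} and \S~\ref{sect:Dynkin} applies directly. Given a maximal totally geodesic submanifold $\Sigma\subset M$, I would first dispose of the non-semisimple case by citing Berndt and Olmos~\cite{BO1} and recording their submanifolds in the tables; the remaining work then concerns semisimple~$\Sigma$.

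For semisimple $\Sigma$, the essential simplification is that Corollary~\ref{cor:correspondence} puts the isometry classes of maximal semisimple totally geodesic submanifolds of~$M$ in bijection with~$\mathcal{L}(\g{g})$, while Lemma~\ref{lemma:maximalcomplexsemisimple} identifies $\mathcal{L}(\g{g})$ with the entries of Table~\ref{table:L(g)complex}. Thus every entry of the table already corresponds to a maximal semisimple totally geodesic submanifold, and the only task is to decide which of these are maximal among \emph{all} totally geodesic submanifolds, i.e.\ which fail to be properly contained in a non-semisimple one.

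To organize this step I would argue that such a $\Sigma$ fails to be maximal exactly when $\g{g}_\Sigma$ equals the semisimple part $[\g{g}_{\widetilde{\Sigma}},\g{g}_{\widetilde{\Sigma}}]$ of the isometry algebra of a non-semisimple maximal totally geodesic submanifold $\widetilde{\Sigma}$: if $\g{g}_\Sigma\subset\g{g}_{\widetilde{\Sigma}}$ with $\widetilde{\Sigma}$ non-semisimple, then since $\g{g}_\Sigma$ is semisimple it lies in the derived algebra of the reductive algebra~$\g{g}_{\widetilde{\Sigma}}$, and the maximality of~$\g{g}_\Sigma$ among subalgebras of non-compact type forces equality. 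Reading off from~\cite{BO1} that the non-semisimple maximal totally geodesic submanifolds of these spaces are $\R\times\mathsf{SO}_{10}(\C)/\mathsf{SO}_{10}$ in $\mathsf{E}_6(\C)/\mathsf{E}_6$ and $\R\times\mathsf{E}_6(\C)/\mathsf{E}_6$ in $\mathsf{E}_7(\C)/\mathsf{E}_7$, with semisimple parts $\g{so}_{10}(\C)$ and $\g{e}_6(\C)$, I conclude that the only possible exceptions are the inclusions $\g{so}_{10}(\C)\subset\g{e}_6(\C)$ and $\g{e}_6(\C)\subset\g{e}_7(\C)$. Lemma~\ref{lemma:nonsemi-complex} confirms that in both cases the totally geodesic orbit is indeed non-maximal, while every other entry of Table~\ref{table:L(g)complex} yields a maximal totally geodesic submanifold; together with the two submanifolds from~\cite{BO1}, these are precisely the spaces listed in Tables~\ref{table:g2}--\ref{table:e8}.

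The step I expect to be the crux is establishing that these two inclusions are the \emph{only} exceptions, rather than merely plausible ones. The conceptual reason is that a central extension $\g{g}_\Sigma\oplus\g{s}$ with part of~$\g{s}$ lying in~$\g{p}$ can exist only when $\g{g}_\Sigma$ has a non-trivial torus in its centralizer in~$\g{g}$; among the maximal subalgebras of non-compact type this happens precisely for the maximal-rank regular subalgebras obtained by deleting a mark-one node of the extended Dynkin diagram, and such nodes occur only for $\g{e}_6(\C)$ and $\g{e}_7(\C)$. Rather than re-deriving this enumeration, I would lean on the completed classification in~\cite{BO1} to make the count exhaustive, so that the whole verification collapses to the two explicit computations already packaged in Lemma~\ref{lemma:nonsemi-complex}.
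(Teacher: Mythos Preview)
Your proposal is correct and follows essentially the same route as the paper's proof: reduce to the semisimple case via~\cite{BO1}, invoke Corollary~\ref{cor:correspondence} and Lemma~\ref{lemma:maximalcomplexsemisimple} to identify the candidates with Table~\ref{table:L(g)complex}, argue that a maximal semisimple $\Sigma$ can fail to be maximal only by sitting inside one of the non-semisimple maximal totally geodesic submanifolds from~\cite{BO1}, and then use Lemma~\ref{lemma:nonsemi-complex} to eliminate $(\g{so}_{10}(\C),1)\subset\g{e}_6(\C)$ and $(\g{e}_6(\C),1)\subset\g{e}_7(\C)$. The only difference is organizational: the paper treats $\mathsf{G}_2(\C)$, $\mathsf{F}_4(\C)$, $\mathsf{E}_8(\C)$ separately by citing~\cite[Corollary~4.3]{BO1} (no non-semisimple maximal totally geodesic submanifolds exist there), whereas you subsume this into a single argument by reading off all non-semisimple examples from~\cite{BO1} at once.
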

\begin{proof}
Let $\Sigma$ be a maximal totally geodesic submanifold in~$M$. If $\Sigma$ is a non-semisimple totally geodesic submanifold, it is one of the examples found by Berndt and Olmos~\cite{BO1}, which we include in our tables.
Let us now assume that $\Sigma$ is semisimple. Then, by Corollary~\ref{cor:correspondence}, there is some Lie subalgebra $\g{g}_{\Sigma}$ of $\g{g}$ which is isometric to some subalgebra in~$\mathcal{L}(\g{g})$.
	
Let $M$ be equal to~$\mathsf{G}_2(\C)/\mathsf{G}_2$, $\mathsf{F}_2(\C)/\mathsf{F}_4$ or $\mathsf{E}_8(\C)/\mathsf{E}_8$. By~\cite[Corollary 4.3]{BO1}, we have that every maximal totally geodesic submanifold of $M$ is semisimple and therefore $\mathcal{L}(\g{g})$ is in one-to-one correspondence with the isometry classes of maximal totally geodesic submanifolds of $M$ by Corollary~\ref{cor:correspondence}.
	
	Now let $M=\mathsf{E}_{6}(\C)/\mathsf{E}_{6}$ and $\g{g}=\g{e}_6(\C)$.  Let $(\g{g}_{\Sigma},\ind_D(\g{g}_{\Sigma}))\in\mathcal{L}(\g{g})\setminus\{(\g{so}_{10}(\C),1)\}$. Let $\Sigma$ be the corresponding  maximal semisimple totally geodesic submanifold. If $\Sigma$ is not maximal, then it must be contained in $\R\times \mathsf{SO}_{10}(\C)/\mathsf{SO}_{10}$, which is the only non-semisimple maximal totally geodesic submanifold in~$M$ according to Berndt and Olmos~\cite{BO1}. However, if this is the case, since $\g{g}_{\Sigma}$ is semisimple, $\g{g}_{\Sigma}$ is contained in~$ \g{so}_{10}(\C)$, contradicting the fact that $\g{g}_{\Sigma}$ is a maximal non-compact type subalgebra in~$\g{g}$. Hence $\Sigma$ is a maximal totally geodesic submanifold. By Lemma~\ref{lemma:nonsemi-complex}, we have that $(\g{so}_{10}(\C),1)$ does not induce a maximal totally geodesic orbit.
	
	Finally, let $M=\mathsf{E}_{7}(\C)/\mathsf{E}_{7}$ and $\g{g}=\g{e}_7(\C)$.
	By a similar argument as above every subalgebra in~$\mathcal{L}(\g{g})\setminus\{(\g{e}_{6}(\C),1)\}$ induces a maximal totally geodesic submanifold. By Lemma~\ref{lemma:nonsemi-complex}, we have that $(\g{e}_{6}(\C),1)$ does not induce a maximal totally geodesic orbit.	
\end{proof}

\section{Proofs of the main theorems}\label{sect:pfs}

\begin{proof}[Proof of Theorem~\ref{mainth:a}]
Follows by combining Theorems~\ref{th:tablesreal} and~\ref{th:tablescomplex}.
\end{proof}

\begin{lemma}\label{lemma:DynCls}
The Dynkin index of the following subalgebras~$\g{h} \subset \g{g}$, $k \ge 1$, where the embeddings $\g{h} \hookrightarrow \g{g}$ are given by $A \mapsto \begin{psmallmatrix}
A &  \\ & 0 \end{psmallmatrix}$, is one:
\begin{inparaenum}[i)]
\item \label{subalgii} $\g{so}_n(\C) \subset \g{so}_{n+k}(\C),  n \ge 5$;
\item \label{subalgi} $\g{sl}_n(\C) \subset \g{sl}_{n+k}(\C), n \ge 2$;
\item \label{subalgiii} $\g{sp}_n(\C) \subset \g{sp}_{n+k}(\C),  n \ge 1$. Moreover, the Dynkin index of the subalgebra $\g{so}_4(\C) \subset \g{so}_{5}(\C)$ is~$(1,1)$.
\end{inparaenum}
\end{lemma}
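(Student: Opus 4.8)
The plan is to reduce all four cases to a single computation with trace forms in the defining representations, exploiting the fact that the normalized form $Q_{\g{g}}$ of a classical simple Lie algebra is a fixed multiple of the trace form of its defining representation. Concretely, recall from \cite[\S2]{Dynkin} that the Dynkin index $x_\rho$ of a representation $\rho\colon\g{g}\to\g{gl}(V)$ is characterized by $\tr(\rho(X)\rho(Y))=x_\rho\,Q_{\g{g}}(X,Y)$ for all $X,Y\in\g{g}$; taking $\rho$ to be the defining representation $V_{\mathrm{def}}$ this reads $Q_{\g{g}}=x_{\mathrm{def}}^{-1}\tr_{V_{\mathrm{def}}}$. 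The first step is therefore to record the values of $x_{\mathrm{def}}$: one has $x_{\mathrm{def}}=1$ for $\g{sl}_n(\C)$ and for $\g{sp}_n(\C)$, while $x_{\mathrm{def}}=2$ for $\g{so}_n(\C)$ with $n\ge 5$. These values are classical; for the orthogonal case I would check them directly on a Cartan subalgebra $\g{a}$ by comparing $\tr_{V_{\mathrm{def}}}(H^2)=\sum_{\mu}\mu(H)^2$, the sum running over the weights $\pm e_i$ of $V_{\mathrm{def}}$, with $Q_{\g{so}_n}(H,H)$, using that the long roots $\pm e_i\pm e_j$ have squared length~$2$, which pins down $Q_{\g{so}_n}$ on~$\g{a}$ as the standard form in the $e_i$-coordinates. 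This direct check also covers the exceptional-isomorphism cases $\g{so}_5=B_2$ and $\g{so}_6=D_3$.

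The second step treats the three simple embeddings \emph{\ref{subalgii})}, \emph{\ref{subalgi})}, \emph{\ref{subalgiii})} simultaneously. Writing $f(A)=\begin{psmallmatrix}A&\\&0\end{psmallmatrix}$, the defining representation of~$\g{g}$ restricts under~$f$ to the defining representation of~$\g{h}$ plus a trivial summand, so the trivial part contributes nothing to traces of products and $\tr_{V_{\mathrm{def}}}(f(X)f(Y))=\tr_{V_{\mathrm{def}}}(XY)$ for all $X,Y\in\g{h}$. Since in each of the three cases $\g{h}$ and~$\g{g}$ are of the same classical type, they share the same value of $x_{\mathrm{def}}$, whence
\[
Q_{\g{g}}(f(X),f(Y))=x_{\mathrm{def}}^{-1}\tr_{V_{\mathrm{def}}}(f(X)f(Y))=x_{\mathrm{def}}^{-1}\tr_{V_{\mathrm{def}}}(XY)=Q_{\g{h}}(X,Y).
\]
Comparing with~\eqref{eq:Dynkinindex} gives $\ind_D(f)=1$ in all three cases.

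It remains to handle $\g{so}_4(\C)\subset\g{so}_5(\C)$, where the source is not simple and the Dynkin index is a pair. Here I would use the isomorphism $\g{so}_4(\C)=\g{sl}_2(\C)\oplus\g{sl}_2(\C)$ together with the decomposition of the $4$-dimensional defining representation as the outer tensor product $(\mathbf{2},\mathbf{2})$ of the two standard $\g{sl}_2$-modules. Restricted to the first factor this is $\mathbf{2}\oplus\mathbf{2}$, so for $X,Y$ in that factor $\tr_{\C^4}(XY)=2\,\tr_{\C^2}(XY)$. Combining this with $Q_{\g{so}_5}=\tfrac12\tr_{\C^5}$ and $\tr_{\C^5}(f(X)f(Y))=\tr_{\C^4}(XY)$ yields $Q_{\g{so}_5}(f(X),f(Y))=\tr_{\C^2}(XY)=Q_{\g{sl}_2}(X,Y)$, so that factor has index~$1$; the same computation for the second factor gives index~$1$, i.e.\ $\ind_D=(1,1)$. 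The only genuinely delicate point in the whole argument is bookkeeping the normalization constant $x_{\mathrm{def}}$ correctly for the orthogonal algebras, in particular making sure that the factor~$2$ in $Q_{\g{so}_n}=\tfrac12\tr_{V_{\mathrm{def}}}$ is the same for source and target and is computed with the intrinsic normalization (not via any accidental low-rank isomorphism); once that is in place the block structure makes every index computation immediate.
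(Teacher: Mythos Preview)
Your proof is correct but takes a genuinely different route from the paper's own argument. The paper proceeds entirely via root systems: it observes that if a regular subalgebra~$\g{h}\subset\g{g}$ contains a root space of~$\g{g}$ belonging to a \emph{longest} root, then the longest roots of~$\g{h}$ and~$\g{g}$ coincide, forcing the two normalized forms to agree and hence $\ind_D(\g{h})=1$. This immediately handles $\g{sp}_n\subset\g{sp}_{n+k}$ and $\g{so}_n\subset\g{so}_{n+2}$ for odd~$n$ (both have two root lengths), and then the multiplicative property plus a chain of one-step inclusions gives all of~\emph{\ref{subalgii})}; types $\mathsf{A}_n$ and $\mathsf{D}_n$ are handled by noting that every root is long, so every maximal semisimple regular subalgebra has index one, and induction finishes~\emph{\ref{subalgi})}. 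Your approach instead identifies $Q_{\g{g}}$ with $x_{\mathrm{def}}^{-1}\tr_{V_{\mathrm{def}}}$ and exploits that the block embedding makes the defining representation restrict to defining-plus-trivial, so the trace form restricts on the nose; since $x_{\mathrm{def}}$ depends only on the classical type, the result is immediate without any induction or appeal to multiplicativity. The trade-off: the paper's argument is more conceptual and requires no computation of normalization constants, while yours is uniform across the three series and handles $\g{so}_4(\C)\subset\g{so}_5(\C)$ by the same trace mechanism rather than as a separate low-rank check. Both are clean; yours is arguably more self-contained since it avoids the inductive step and the reference to extended Dynkin diagrams.
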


\begin{proof}
Assume $\g{h}$ is a regular subalgebra of~$\g{g}$. Then a root system of~$\g{h}$ is a subset of a root system of~$\g{g}$ and we can apply the following simple observation. If $\g{h}$ contains a root space of~$\g{g}$ corresponding to a longest root with respect to some Cartan subalgebra~$\g{a}$ of~$\g{g}$, then the length of the longest root of~$\g{h}$ and the length of the longest root of~$\g{g}$ agree, and the Dynkin index of~$\g{h}$ in~$\g{g}$ is one.
This holds in particular if the roots systems of both~$\g{h}$ and~$\g{g}$ contain roots of different lengths, which shows that the subalgebras~\emph{\ref{subalgiii})} and $\g{so}_n(\C) \subset \g{so}_{n+2}(\C)$, for $n \ge 5$ odd, have Dynkin index one.
Using the multiplicative property of the Dynkin index and the chain of inclusions $\g{so}(n,\C) \subset \g{so}(n+1,\C) \subset \g{so}(n+2,\C)$, it now follows inductively that all the subalgebras~\emph{\ref{subalgii})} have Dynkin index one.
The above observation also applies if $\g{g}$ is of type $\mathsf{A}_n$ or $\mathsf{D}_n$, $n \ge 4$, and $\g{h}$ is a maximal semisimple regular subalgebra, since then all roots of the extended Dynkin diagram are of the same length, see~\cite[Ch.~1, \S3, Table~4]{Onisbook2}. Using induction and the multiplicative property of the Dynkin index this shows that the subalgebras~\emph{\ref{subalgi})} all have Dynkin index one.
\end{proof}

\begin{proof}[Proof of Theorem~\ref{th:index}]
Examples of totally geodesic submanifolds~$\Sigma$ in irreducible symmetric spaces~$M$ with $i(M)=\codim(\Sigma)$ are given in~\cite[Table~1]{BO5}.
For the exceptional symmetric spaces~$M$, these pairs~$(M,\Sigma)$ are the following and the theorem can be proved in these cases by looking up the Dynkin indices in our Tables~\ref{table:g2}, \ref{table:f4}, \ref{table:e6}, \ref{table:e7}, \ref{table:e8}.

\begin{itemize}
\item
$(\mathsf{G}_2/\mathsf{SO}_4,\mathsf{SL}_3(\R)/\mathsf{SO}_3)$,
$(\mathsf{G}_2(\C)/\mathsf{G}_2,\mathsf{G}_2/\mathsf{SO}_4)$,
$(\mathsf{G}_2(\C)/\mathsf{G}_2,\mathsf{SL}_3(\C)/\mathsf{SU}_3)$,
\item
$(\mathsf{F}^4_4/\mathsf{Sp}_3\mathsf{Sp}_1,\mathsf{SO}^0_{4,5}/\mathsf{SO}_{4}\times\mathsf{SO}_{5})$,
$(\mathsf{F}^{-20}_4/\mathsf{Spin}_9,\mathsf{SO}^0_{1,8}/\mathsf{SO}_{8})$,\\
$(\mathsf{F}^{-20}_4/\mathsf{Spin}_9,\mathsf{Sp}_{1,2}/\mathsf{Sp}_{1}\times\mathsf{Sp}_{2}),$
$(\mathsf{F}_4(\C)/\mathsf{F}_4,\mathsf{SO}_9(\C)/\mathsf{SO}_9),$
\item
$(\mathsf{E}^6_6/\mathsf{Sp}_4,\mathsf{F}^4_4/\mathsf{Sp}_3\mathsf{Sp}_1)$,
$(\mathsf{E}^2_6/\mathsf{SU}_6\mathsf{Sp}_1,\mathsf{F}^4_4/\mathsf{Sp}_3\mathsf{Sp}_1)$,
$(\mathsf{E}^{-14}_6/\mathsf{Spin}_{10}\mathsf{U}_1,\mathsf{SO}^*_{10}/\mathsf{U}_5)$,\\
$(\mathsf{E}^{-26}_6/\mathsf{F}_{4},\mathsf{F}^{-20}_4/\mathsf{Spin}_9)$,
$(\mathsf{E}_6(\C)/\mathsf{E}_{6},\mathsf{F}_4(\C)/\mathsf{F}_4)$,
\item
$(\mathsf{E}^{-5}_7/\mathsf{SO}_{12}\mathsf{Sp}_1,\mathsf{E}^2_6/\mathsf{SU}_6\mathsf{Sp}_1)$,
$(\mathsf{E}^{-25}_7/\mathsf{E}_{6}\mathsf{U}_1,\mathsf{E}^{-14}_6/\mathsf{Spin}_{10}\mathsf{U}_1)$,
$(\mathsf{E}^{7}_7/\mathsf{SU}_{8},\R\times\mathsf{E}^6_6/\mathsf{Sp}_4)$,\\
$(\mathsf{E}_7(\C)/\mathsf{E}_{7},\R\times\mathsf{E}_6(\C)/\mathsf{E}_{6})$,
\item
$(\mathsf{E}^{-24}_8/\mathsf{E}_{7}\mathsf{Sp}_1,\mathsf{E}^{-5}_7/\mathsf{SO}_{12}\mathsf{Sp}_1)$,
$(\mathsf{E}^{8}_8/\mathsf{SO}_{16},\mathsf{SL}_{2}(\R)/\mathsf{SO}_2\times\mathsf{E}^7_7/\mathsf{SU}_8)$,\\
$(\mathsf{E}_8(\C)/\mathsf{E}_{8},\mathsf{SL}_{2}(\C)/\mathsf{SU}_{2}\times \mathsf{E}_7(\C)/\mathsf{E}_7)$.
\end{itemize}

It remains to show the assertion of the theorem for the classical spaces.
We have to consider the pairs of spaces~$(M,\Sigma)$, where $i(M)=\codim(\Sigma)$, given in Table~\ref{table:ixtg}, where we have partially reproduced the contents of~\cite[Table~1]{BO5}.

\begin{table}[h!]
	\caption{Examples of totally geodesic submanifolds~$\Sigma$ in classical symmetric spaces~$M$ with $\codim(\Sigma)=i(M)$.}
	\label{table:ixtg}

\begin{tabular}{llll}
$M$ & $\Sigma$ & $i(M)$ & Conditions \\ \hline
$\mathsf{SU}_{r,r+k}/\mathsf{S}(\mathsf{U}_{r}\times\mathsf{U}_{r+k})$ & $\mathsf{SU}_{r,r+k-1}/\mathsf{S}(\mathsf{U}_{r}\times\mathsf{U}_{r+k-1})$ & $2r$ & $r \ge 1, k \ge 1$ \\
$\mathsf{SU}_{r,r}/\mathsf{S}(\mathsf{U}_{r}\times\mathsf{U}_{r})$ & $\mathsf{SU}_{r-1,r}/\mathsf{S}(\mathsf{U}_{r-1}\times\mathsf{U}_{r})$ & $2r$ & $r \ge 3$ \\ \hline
$\mathsf{SO}^0_{r,r+k}/\mathsf{SO}_{r}\times\mathsf{SO}_{r+k}$ & $\mathsf{SO}^0_{r,r+k-1}/\mathsf{SO}_{r}\times\mathsf{SO}_{r+k-1}$ & $r$ & $r \ge 1, k \ge 1$ \strt \\
$\mathsf{SO}^0_{r,r}/\mathsf{SO}_{r}\times\mathsf{SO}_{r}$ & $\mathsf{SO}^0_{r-1,r}/\mathsf{SO}_{r-1}\times\mathsf{SO}_{r}$ & $r$ & $r \ge 4$ \\ \hline
$\mathsf{Sp}_{2,2}/\mathsf{Sp}_{2}\times\mathsf{Sp}_{2}$ & $\mathsf{Sp}_2(\C)/\mathsf{Sp}_{2}$ & $6$ & \\
$\mathsf{Sp}_{r,r+k}/\mathsf{Sp}_{r}\times\mathsf{Sp}_{r+k}$ & $\mathsf{Sp}_{r,r+k-1}/\mathsf{Sp}_{r}\times\mathsf{Sp}_{r+k-1}$ & $4r$ & $r \ge 1, k \ge 1$ \\
$\mathsf{Sp}_{r,r}/\mathsf{Sp}_{r}\times\mathsf{Sp}_{r}$ & $\mathsf{Sp}_{r-1,r}/\mathsf{Sp}_{r-1}\mathsf{Sp}_{r}$ & $4r$ & $r \ge 3$ \\ \hline
$\mathsf{SL}_{r+1}(\R)/\mathsf{SO}_{r+1}$ & $\R \times \mathsf{SL}_{r}(\R)/\mathsf{SO}_{r}$ & $r$ & $r \ge 2$ \\ \hline
$\mathsf{SU}^*_{6}/\mathsf{Sp}_{3}$ & $\mathsf{SL}_{3}(\C)/\mathsf{SU}_{3}$ & $6$ & \\
$\mathsf{SU}^*_{8}/\mathsf{Sp}_{4}$ & $\mathsf{Sp}_{2,2}/\mathsf{Sp}_{2}\mathsf{Sp}_{2}$ & $11$ & \\
$\mathsf{SU}^*_{2r+2}/\mathsf{Sp}_{r+1}$ & $\R \times \mathsf{SU}^*_{2r}/\mathsf{Sp}_{r}$ & $4r$ & $r \ge 4$ \\ \hline
$\mathsf{Sp}_{r}(\R)/\mathsf{U}_{r}$ & $\mathsf{Sp}_{1}(\R)/\mathsf{U}_{1} \times \mathsf{Sp}_{r-1}(\R)/\mathsf{U}_{r-1}$ & $2r-2$ & $r \ge 3$ \\ \hline
$\mathsf{SO}^*_{4r}/\mathsf{U}_{2r}$ & $\mathsf{SO}^*_{4r-2}/\mathsf{U}_{2r-1}$ & $4r-2$ & $r \ge 3$ \\
$\mathsf{SO}^*_{4r+2}/\mathsf{U}_{2r+1}$ & $\mathsf{SO}^*_{4r}/\mathsf{U}_{2r}$ & $4r$ & $r \ge 2$ \\ \hline
$\mathsf{SL}_3(\C)/\mathsf{SU}_{3}$ & $\mathsf{SL}_3(\R)/\mathsf{SO}_{3}$ & $3$ & \\
$\mathsf{SL}_4(\C)/\mathsf{SU}_{4}$ & $\mathsf{Sp}_2(\C)/\mathsf{Sp}_{2}$ & $5$ & \\
$\mathsf{SL}_{r+1}(\C)/\mathsf{SU}_{r+1}$ & $\R \times \mathsf{SL}_{r}(\C)/\mathsf{SU}_{r}$ & $2r$ & $r \ge 4$ \\ \hline
$\mathsf{SO}_{2r+1}(\C)/\mathsf{SO}_{2r+1}$ & $\mathsf{SO}_{2r}(\C)/\mathsf{SO}_{2r}$ & $2r$ & $r \ge 2$ \\ \hline
$\mathsf{Sp}_{r}(\C)/\mathsf{Sp}_{r}$ & $\mathsf{Sp}_{1}(\C)/\mathsf{Sp}_{1} \times \mathsf{Sp}_{r-1}(\C)/\mathsf{Sp}_{r-1}$ & $4r-4$ & $r \ge 3$ \\ \hline
$\mathsf{SO}_{2r}(\C)/\mathsf{SO}_{2r}$ & $\mathsf{SO}_{2r-1}(\C)/\mathsf{SO}_{2r-1}$ & $2r-1$ & $r \ge 4$ \\
\end{tabular}
\end{table}

Among the pairs of spaces given in the table, there are some infinite series and also some isolated examples in low dimensions. For the infinite series, the assertion of the theorem follows in all cases from Lemma~\ref{lemma:DynCls} and the multiplicative property of the Dynkin index.
We will treat the remaining isolated examples individually. First note that the Dynkin index of~$\g{sp}_2(\C)$ in~$\g{sp}_{2,2}$ is~$(1,1)$, as follows from Lemma~\ref{lemma:DynCls} since the complexifications are $\g{sp}_2(\C) \oplus \g{sp}_2(\C)$ and~$\g{sp}_4(\C)$.
The Dynkin index of~$\g{sl}_3(\C)$ in~$\g{su}^*_6$ is $(1,1)$, as the complexifications are $\g{sl}_3(\C) \oplus \g{sl}_3(\C)$ and~$\g{sl}_6(\C)$. The subalgebra $\g{sl}_3(\R)$ is a real form of~$\g{sl}_3(\C)$ and therefore has Dynkin index~one. The subalgebra~$\g{sp}_2(\C)$ of~$\g{sl}_4(\C)$ has Dynkin index one by Lemma~\ref{lemma:DynCls}, since it corresponds to the subalgebra~$\g{so}_5(\C) \subset \g{so}_6(\C)$ under the isomorphism $\g{so}_6(\C) \simeq \g{sl}_4(\C)$. Finally, it remains to determine the Dynkin index of the subalgebra~$\g{sp}_{2,2}$ in~$\g{su}^*_8$. Since the Dynkin indices of $\g{sp}_2(\C) \subset \g{sp}_4(\C)$ and $\g{sl}_4(\C) \subset \g{sl}_8(\C)$ are one by~Lemma~\ref{lemma:DynCls}, and we know from the previous case that the Dynkin index of~$\g{sp}_2(\C) \subset \g{sl}_4(\C)$ is one, using the multiplicative property of the Dynkin index, we conclude from the following diagram
\[
\begin{array}{ccc}
  \g{sp}_2(\C) & \stackrel1{\longrightarrow} & \g{sl}_4(\C) \\
  {\scriptstyle 1}\downarrow &  & {\scriptstyle 1}\downarrow  \\
  \g{sp}_{4}(\C) & \longrightarrow & \g{sl}_8(\C)
\end{array}
\]
that the Dynkin index of $\g{sp}_{4}(\C)$ in~$\g{sl}_8(\C)$ is also one.
\end{proof}

\begin{table}[p]
	\caption[]{Maximal totally geodesic submanifolds of symmetric spaces of $\mathsf{G}_2$-type.}
	\label{table:g2}
	\normalsize
	\begin{tabular}{lllp{11ex}l}
		$M$ & $\Sigma$ & Dynkin index & Reflective? & $\dim \Sigma$
		\strt \\ \hline
		\multirow{2}{12ex}{$\mathsf{G}^2_2/\mathsf{SO}_4$}
		& $\mathsf{SL}_2(\R)/\mathsf{SO}_2\times\mathsf{SL}_2(\R)/\mathsf{SO}_2$ & $(3,1)$ & Yes & $4$
		\strt \\ \cline{2-5}
		& $\mathsf{SL}_3(\R)/\mathsf{SO}_3$ & $1$ & No & $5$
		\strt \\ \cline{2-5}
		& $\mathsf{SU}_{1,2}/\mathsf{S(U_1\times U_2)}$ & $1$ & No & $4$
		\strt \\ \cline{2-5}
		& $\mathsf{SL}_2(\R)/\mathsf{SO}_2$ & $28$ & No & $2$
		\strt \\ \hline
		\multirow{2}{12ex}{$\mathsf{G}_2(\C)/\mathsf{G}_2$}
		& $\mathsf{SL}_2(\C)/\mathsf{SU}_2\times\mathsf{SL}_2(\C)/\mathsf{SU}_2$ & $(3,1)$ & Yes & $6$
		\strt \\ \cline{2-5}
		& $\mathsf{SL}_2(\C)/\mathsf{SU}_2$ & $28$ & No & $3$
		\strt \\ \cline{2-5}
		& $\mathsf{SL}_{3}(\C)/\mathsf{SU}_3$ & $1$ & No & $8$
		\strt \\ \cline{2-5}
		& $\mathsf{G}^2_2/\mathsf{SO}_4$ & $1$ & Yes & $8$ \strt
	\end{tabular}
	\bigskip\bigskip
\end{table}

\begin{table}[p]
	\caption[]{Maximal totally geodesic submanifolds of symmetric spaces of $\mathsf{F}_4$-type.}
	\label{table:f4}
	\normalsize
	\begin{tabular}{lllp{11ex}l}
		$M$ & $\Sigma$ & Dynkin index & Reflective? & $\dim \Sigma$
		\strt \\ \hline
		\multirow{2}{12ex}{$\mathsf{F}^4_4/\mathsf{Sp}_3\mathsf{Sp}_1$}
		& $\mathsf{SL}_3(\R)/\mathsf{SO}_3\times\mathsf{SL}_3(\R)/\mathsf{SO}_3$ & $(1,2)$ & No & $10$
		\strt \\ \cline{2-5}
		& $\mathsf{SU}_{1,2}/\mathsf{S(U_1{\times} U_2)}{\times}\mathsf{SU}_{1,2}/\mathsf{S(U_1{\times} U_2)}$ & $(1,2)$ & No & $8$
		\strt \\ \cline{2-5}
		& $\mathsf{SL}_2(\R)/\mathsf{SO}_2\times\mathsf{Sp}_3(\R)/\mathsf{U}_3$ & $(1,1)$ & Yes & $14$
		\strt \\ \cline{2-5}
		& $\mathsf{Sp}_{1,2}/\mathsf{Sp}_1\times\mathsf{Sp}_2$ & $1$ & Yes & $8$
		\strt \\ \cline{2-5}
		& $\mathsf{SO}^0_{4,5}/\mathsf{SO}_4\times\mathsf{SO}_5$ & $1$ & Yes & $20$
		\strt \\ \cline{2-5}
		& $\mathsf{SL}_{2}(\R)/\mathsf{SO}_2$ & $156$ & No & $2$
		\strt \\ \cline{2-5}
		& $\mathsf{SL}_{2}(\R)/\mathsf{SO}_2\times\mathsf{G}^2_2/\mathsf{SO}_4$ & $(8,1)$ & No & $10$
		\strt \\ \hline
		\multirow{2}{12ex}{$\mathsf{F}^{-20}_4/\mathsf{Spin}_9$}
		& $\mathsf{SO}^0_{1,8}/\mathsf{SO}_8$ & $1$ & Yes  & $8$
		\strt \\ \cline{2-5}
		& $\mathsf{Sp}_{1,2}/\mathsf{Sp}_1\times \mathsf{Sp}_2$ & $1$ & Yes  & $8$
		\strt \\ \hline
		\multirow{5}{12ex}{$\mathsf{F}_4(\C)/\mathsf{F}_4$}
		& $\mathsf{SL}_{2}(\C)/\mathsf{SU}_{2}\times\mathsf{Sp}_{3}(\C)/\mathsf{Sp}_{3}$ & $(1,1)$ & Yes  & $24$
		\strt \\ \cline{2-5}
		& $\mathsf{SL}_{3}(\C)/\mathsf{SU}_{3}\times\mathsf{SL}_{3}(\C)/\mathsf{SU}_{3}$ & $(1,2)$ & No & $16$
		\strt \\ \cline{2-5}
		& $\mathsf{SL}_{2}(\C)/\mathsf{SU}_{2}$ & $156$ & No & $3$
		\strt \\ \cline{2-5}
		& $\mathsf{SO}_9(\C)/\mathsf{SO}_9$ & $1$ & Yes & $36$
		\strt \\ \cline{2-5}
		& $\mathsf{SL}_{2}(\C)/\mathsf{SU}_{2}\times\mathsf{G}_{2}(\C)/\mathsf{G}_{2}$ & $(8,1)$ & No & $17$
		\strt \\ \cline{2-5}
		& $\mathsf{F}^{4}_{4}/\mathsf{Sp}_{3}\mathsf{Sp}_1$ & $1$ & Yes & $28$
		\strt \\ \cline{2-5}
		& $\mathsf{F}^{-20}_{4}/\mathsf{Spin}_{9}$ & $1$ & Yes & $16$ \strt
		
	\end{tabular}
\end{table}

\begin{table}[p]
	\caption[]{Maximal totally geodesic submanifolds of symmetric spaces of $\mathsf{E}_6$-type.}
	\label{table:e6}
	\footnotesize
	\begin{tabular}{lllp{11ex}l}
		$M$ & $\Sigma$ & Dynkin index & Reflective? & $\dim \Sigma$
		\strt \\ \hline
		\multirow{2}{12ex}{$\mathsf{E}^6_6/\mathsf{Sp}_4$}
		& $(\mathsf{SL}_3(\R)/\mathsf{SO}_3)^3$ & $(1,1,1)$ & No & $15$
		\strt \\ \cline{2-5}
		& $\mathsf{SU}_{1,2}/\mathsf{S(U_1\times U_2)}\times\mathsf{SL}_{3}(\C)/\mathsf{SU}_3$ & $(1,1,1)$ & No & $12$
		\strt \\ \cline{2-5}
		& $\mathsf{SL}_2(\R)/\mathsf{SO}_2\times\mathsf{SL}_6(\R)/\mathsf{SO}_6$ & $(1,1)$ & Yes & $22$
		\strt \\ \cline{2-5}
		& $\mathsf{SU}^*_{6}/\mathsf{Sp}_3$ & $1$ & Yes & $14$
		\strt \\ \cline{2-5}
		& $\mathsf{SL}_3(\R)/\mathsf{SO}_3\times\mathsf{G}^2_2/\mathsf{SO}_4$ & $(2,1)$ & No & $13$
		\strt \\ \cline{2-5}
		& $\mathsf{Sp}_{2,2}/\mathsf{Sp}_2\times \mathsf{Sp}_2$ & $1$ & Yes & $16$
		\strt \\ \cline{2-5}
		& $\mathsf{Sp}_{4}(\R)/\mathsf{U}_4$ & $1$ & Yes & $20$
		\strt \\ \cline{2-5}
		& $\mathsf{F}^4_{4}/\mathsf{Sp}_3\mathsf{Sp}_1$ & $1$ & Yes & $28$
		\strt \\ \cline{2-5}
		& $\R\times\mathsf{SO}^0_{5,5}/\mathsf{SO}_5\times\mathsf{SO}_5$ & $1$ & Yes & $26$
		\strt \\ \hline
		\multirow{2}{12ex}{$\mathsf{E}^{2}_6/\mathsf{SU}_6\mathsf{Sp}_1$}
		& $\mathsf{SO}^0_{4,6}/\mathsf{SO}_4\times\mathsf{SO}_6$& $1$ & Yes & $24$
		\strt \\ \cline{2-5}
		& $\mathsf{SO}^*_{10}/\mathsf{U}_5$ & $1$ & Yes & $20$
		\strt \\ \cline{2-5}
		& $\mathsf{SL}_3(	\C)/\mathsf{SU}_3\times\mathsf{SL}_3(	\R)/\mathsf{SO}_3$ & $(1,1,1)$ & No & $13$
		\strt \\ \cline{2-5}
		& $(\mathsf{SU}_{1,2}/\mathsf{S(U_1\times U_2)})^3$ & $(1,1,1)$ & No & $12$
		\strt \\ \cline{2-5}
		& $\mathsf{SL}_{2}(\R)/\mathsf{SO}_2\times\mathsf{SU}_{3,3}/\mathsf{S(U_3\times U_3)}$ & $(1,1)$ & Yes & $20$
		\strt \\ \cline{2-5}
		& $\mathsf{SU}_{2,4}/\mathsf{S(U_2\times U_4)}$ & $1$ & Yes & $16$
		\strt \\ \cline{2-5}
		& $\mathsf{SU}_{1,2}/\mathsf{S(U_1\times U_2)}$ & $9$ & No & $4$
		\strt \\ \cline{2-5}
		& $\mathsf{SL}_{3}(\R)/\mathsf{SO}_3$ & $9$ & No & $5$
		\strt \\ \cline{2-5}
		& $\mathsf{G}^2_{2}/\mathsf{SO}_4$ & $3$ & No & $8$
		\strt \\ \cline{2-5}
		& $\mathsf{SU}_{1,2}/\mathsf{S(U_1\times U_2)}\times\mathsf{G}^2_{2}/\mathsf{SO}_4$ & $(2,1)$ & No & $12$
		\strt \\ \cline{2-5}
		& $\mathsf{Sp}_{1,3}/\mathsf{Sp}_1\times\mathsf{Sp}_3$ & $1$ & Yes & $12$
		\strt \\ \cline{2-5}
		& $\mathsf{Sp}_{4}(\R)/\mathsf{U}_4$ & $1$ & Yes & $20$
		\strt \\ \cline{2-5}
		& $\mathsf{F}^4_4/\mathsf{Sp}_3\mathsf{Sp}_1$ & $1$ & Yes & $28$
		\strt \\ \hline
		\multirow{2}{15ex}{$\mathsf{E}^{-14}_6/\mathsf{Spin}_{10}\mathsf{U}_1$}
		& $\mathsf{SO}^*_{10}/\mathsf{U}_5$ & $1$ & Yes & $20$
		\strt \\ \cline{2-5}
		& $\mathsf{SO}^0_{2,8}/\mathsf{SO}_2\times\mathsf{SO}_8$ & $1$ & Yes & $16$
		\strt \\ \cline{2-5}
		& $\mathsf{SL}_2(\R)/\mathsf{SO}_2\times\mathsf{SU}_{1,5}/\mathsf{S(U_1\times U_5)}$ & $(1,1)$ & Yes & $12$
		\strt \\ \cline{2-5}
		& $\mathsf{Sp}_{2,2}/\mathsf{Sp}_2\times\mathsf{Sp}_2$ & $1$ & Yes & $16$
		\strt \\ \cline{2-5}
		& $\mathsf{F}^{-20}_4/\mathsf{Spin}_9$ & $1$ & Yes & $16$
		\strt \\ \cline{2-5}
		& $\mathsf{SU}_{2,4}/\mathsf{S(U_2\times U_4)}$ & $1$ & Yes & $16$
		\strt \\ \hline
		\multirow{5}{12ex}{$\mathsf{E}^{-26}_6/\mathsf{F}_4$}
		&
		$\R\times\mathsf{SO}^0_{1,9}/\mathsf{SO}_9$ & $1$ & Yes  & $10$
		\strt \\ \cline{2-5}
		& $\mathsf{SU}^*_6/\mathsf{Sp}_3$ & $1$ & Yes & $14$
		\strt \\ \cline{2-5}
		&  $\mathsf{Sp}_{1,3}/\mathsf{Sp}_1\times\mathsf{Sp}_3$ & $1$ & Yes & $12$
		\strt \\ \cline{2-5}
		&  $\mathsf{F}^{-20}_4/\mathsf{Spin}_9$ & $1$ & Yes & $16$
		\strt \\
		\hline
		\multirow{5}{12ex}{$\mathsf{E}_6(\C)/\mathsf{E}_6$}
		& $\mathsf{SL}_{3}(\C)/\mathsf{SU}_{3}$ & $9$ & No  & $8$
		\strt \\ \cline{2-5}
		& $\mathsf{G}_{2}(\C)/\mathsf{G}_{2}$ & $3$ & No & $14$
		\strt \\ \cline{2-5}
		& $\mathsf{SL}_{3}(\C)/\mathsf{SU}_{3}\times \mathsf{G}_2(\C)/\mathsf{G}_2$ & $(2,1)$ & No & $22$
		\strt \\ \cline{2-5}
		& $\mathsf{Sp}_4(\C)/\mathsf{Sp}_4$ & $1$ & Yes & $36$
		\strt \\ \cline{2-5}
		& $\mathsf{F}_{4}(\C)/\mathsf{F}_{4}$ & $1$ & Yes & $52$
		\strt \\ \cline{2-5}
		& $\R\times\mathsf{SO}_{10}(\C)/\mathsf{SO}_{10}$ & $1$ & Yes & $46$
		\strt \\ \cline{2-5}
		& $(\mathsf{SL}_{3}(\C)/\mathsf{SU}_{3})^3$ & $(1,1,1)$ & No & $24$
		\strt \\ \cline{2-5}
		& $\mathsf{SL}_{2}(\C)/\mathsf{SU}_{2}\times \mathsf{SL}_6(\C)/\mathsf{SU}_6$ & $(1,1)$ & Yes & $38$
		\strt \\ \cline{2-5}
		& $\mathsf{E}^6_6/\mathsf{Sp}_4$ & $1$ & Yes & $42$
		\strt \\ \cline{2-5}
		& $\mathsf{E}^2_6/\mathsf{SU}_6\mathsf{Sp}_1$ & $1$ & Yes & $40$
		\strt \\ \cline{2-5}
		& $\mathsf{E}^{-26}_6/\mathsf{F}_4$ & $1$ & Yes & $26$
		\strt \\ \cline{2-5}
		& $\mathsf{E}^{-14}_6/\mathsf{Spin}_{10}\mathsf{U}_1$ & $1$ & Yes & $32$ \strt
	\end{tabular}
\end{table}

\begin{table}[p]
	\caption[]{Maximal totally geodesic submanifolds of symmetric spaces of $\mathsf{E}_7$-type.}
	\label{table:e7}
	\footnotesize
	\begin{tabular}{lllp{11ex}l}
		$M$ & $\Sigma$ & Dynkin index & Reflective? & $\dim \Sigma$
		\strt \\ \hline
		\multirow{2}{15ex}{$\mathsf{E}^{-5}_7/\mathsf{SO}_{12}\mathsf{Sp}_1$}
		& $\mathsf{E}^2_6/\mathsf{SU}_6\mathsf{Sp}_1$ & $1$ & Yes & $40$
		\strt \\ \cline{2-5}
		& $\mathsf{E}^{-14}_6/\mathsf{Spin}_{10}\mathsf{U}_1$ & $1$ & Yes & $32$
		\strt \\ \cline{2-5}
		& $\mathsf{SL}_2(\R)/\mathsf{SO}_2\times\mathsf{SO}^*_{12}/\mathsf{U}_6$ & $(1,1)$ & Yes & $32$
		\strt \\ \cline{2-5}
		& $\mathsf{SO}^0_{4,8}/\mathsf{SO}_4\times\mathsf{SO}_8$ & $1$ & Yes & $32$
		\strt \\ \cline{2-5}
		& $\mathsf{SU}_{4,4}/\mathsf{S(U_4\times U_4)}$ & $1$ & Yes & $32$
		\strt \\ \cline{2-5}
		& $\mathsf{SU}_{2,6}/\mathsf{S(U_2\times U_6)}$ & $1$ & Yes & $24$
		\strt \\ \cline{2-5}
		& $\mathsf{SL}_{3}(\R)/\mathsf{SO}_3\times\mathsf{SU}^*_6/\mathsf{Sp}_3$ & $(1,1)$ & No & $19$
		\strt \\ \cline{2-5}
		& $\mathsf{SU}_{1,2}/\mathsf{S(U_1\times U_2)}\times\mathsf{SU}_{2,4}/\mathsf{S(U_2\times U_4)}$ & $(1,1)$ & No & $20$
		\strt \\ \cline{2-5}
		& $\mathsf{SU}_{1,2}/\mathsf{S(U_1\times U_2)}$ & $21$ & No & $4$
		\strt \\ \cline{2-5}
		& $\mathsf{Sp}_{1,2}/\mathsf{Sp}_1\times \mathsf{Sp}_2\times\mathsf{G}^2_{2}/\mathsf{SO}_4$ & $(1,1)$ & No & $16$
		\strt \\ \hline
		\multirow{2}{12.67ex}{$\mathsf{E}^{-25}_7/\mathsf{E}_{6}\mathsf{U}_1$}
		& $\R\times\mathsf{E}^{-26}_6/\mathsf{F}_4$ & $1$ & Yes & $27$
		\strt \\ \cline{2-5}
		& $\mathsf{E}^{-14}_6/\mathsf{Spin}_{10}\mathsf{U}_1$ & $1$ & Yes & $32$
		\strt \\ \cline{2-5}
		& $\mathsf{SU}^*_8/\mathsf{Sp}_4$ & $1$ & Yes & $27$
		\strt \\ \cline{2-5}
		& $\mathsf{SU}_{2,6}/\mathsf{S(U_2\times U_6)}$ & $1$ & Yes & $24$
		\strt \\ \cline{2-5}
		& $\mathsf{SU}_{1,2}/\mathsf{S(U_1\times U_2)}\times\mathsf{SU}_{1,5}/\mathsf{S(U_1\times U_5)}$ & $(1,1)$ & No & $14$
		\strt \\ \cline{2-5}
		& $\mathsf{SL}_{2}(\R)/\mathsf{SO}_2\times \mathsf{F}^{-20}_4/\mathsf{Spin}_9$ & $(3,1)$ & No & $18$ \strt \\ \cline{2-5}
		& $\mathsf{SL}_2(\R)/\mathsf{SO}_2\times\mathsf{SO}^0_{2,10}/\mathsf{SO}_2\times\mathsf{SO}_{10}$ & $(1,1)$ & Yes & $22$
		\strt \\ \cline{2-5}
		& $\mathsf{SO}^*_{12}/\mathsf{U}_6$ & $1$ & Yes & $30$
		\strt \\ \hline
		\multirow{2}{12.67ex}{$\mathsf{E}^{7}_7/\mathsf{SU}_{8}$}
		& $\R\times\mathsf{E}^6_6/\mathsf{Sp}_4$ & $1$ & Yes & $43$
		\strt \\ \cline{2-5}
		& $\mathsf{E}^2_6/\mathsf{SU}_6\mathsf{Sp}_1$ & $1$ & Yes & $40$
		\strt \\ \cline{2-5}
		& $\mathsf{SL}_2(\R)/\mathsf{SO}_{2}\times\mathsf{SO}^0_{6,6}/\mathsf{SO}_6\times\mathsf{SO}_6$ & $(1,1)$ & Yes & $38$
		\strt \\ \cline{2-5}
		& $\mathsf{SO}^*_{12}/\mathsf{U}_6$ & $1$ & Yes & $30$
		\strt \\ \cline{2-5}
		& $\mathsf{SL}_8(\R)/\mathsf{SO}_8$ & $1$ & Yes & $35$
		\strt \\ \cline{2-5}
		& $\mathsf{SU}_{4,4}/\mathsf{S(U_4\times U_4)}$ & $1$ & Yes & $32$
		\strt \\ \cline{2-5}
		& $\mathsf{SU}^*_8/\mathsf{Sp}_4$ & $1$ & Yes & $27$
		\strt \\ \cline{2-5}
		& $\mathsf{SL}_{3}(\R)/\mathsf{SO}_3\times\mathsf{SL}_6(\R)/\mathsf{SO}_6$ & $(1,1)$ & No & $25$
		\strt \\ \cline{2-5}
		& $\mathsf{SU}_{1,2}/\mathsf{S(U_1\times U_2)}\times\mathsf{SU}_{3,3}/\mathsf{S(U_3\times U_3)}$ & $(1,1)$ & No & $22$
		\strt \\ \cline{2-5}
		& $\mathsf{SL}_{2}(\R)/\mathsf{SO}_2$ & $231,399$ & No & $2$
		\strt \\ \cline{2-5}
		& $\mathsf{SL}_{3}(\R)/\mathsf{SO}_3$ & $21$ & No & $5$
		\strt \\ \cline{2-5}
		& $\mathsf{SL}_{2}(\R)/\mathsf{SO}_2\times\mathsf{SL}_{2}(\R)/\mathsf{SO}_2$ & $(15,24)$ & No & $4$
		\strt \\ \cline{2-5}
		& $\mathsf{SL}_{2}(\R)/\mathsf{SO}_2\times \mathsf{G}^2_2/\mathsf{SO}_4$ & $(7,2)$ & No & $10$
		\strt \\ \cline{2-5}
		& $\mathsf{Sp}_{3}(\R)/\mathsf{U}_3\times\mathsf{G}^2_2/\mathsf{SO}_4$ & $(1,1)$ & No & $20$
		\strt \\ \cline{2-5}
		& $\mathsf{SL}_{2}(\R)/\mathsf{SO}_2\times\mathsf{F}^4_4/\mathsf{Sp}_3\mathsf{Sp}_1$ & $(3,1)$ & No & $30$ \strt \\
		\hline
		\multirow{5}{12ex}{$\mathsf{E}_7(\C)/\mathsf{E}_7$}
		& $\mathsf{SL}_{2}(\C)/\mathsf{SU}_{2}$ & $231,399$ & No  & $3$
		\strt \\ \cline{2-5}
		& $\mathsf{SL}_{3}(\C)/\mathsf{SU}_{3}$ & $21$ & No  & $8$
		\strt \\ \cline{2-5}
		& $\mathsf{SL}_{2}(\C)/\mathsf{SU}_{2}\times \mathsf{SL}_{2}(\C)/\mathsf{SU}_{2}$ & $(15,24)$ & No & $6$
		\strt \\ \cline{2-5}
		& $\mathsf{SL}_{2}(\C)/\mathsf{SU}_{2}\times \mathsf{G}_2(\C)/\mathsf{G}_2$ & $(7,2)$ & No & $17$
		\strt \\ \cline{2-5}
		& $\mathsf{Sp}_3(\C)/\mathsf{Sp}_3\times \mathsf{G}_2(\C)/\mathsf{G}_2$ & $(1,1)$ & No & $35$
		\strt \\ \cline{2-5}
		& $\R\times\mathsf{E}_{6}(\C)/\mathsf{E}_{6}$ & $1$ & Yes & $79$
		\strt \\ \cline{2-5}
		& $\mathsf{SL}_{2}(\C)/\mathsf{SU}_{2}\times \mathsf{SO}_{12}(\C)/\mathsf{SO}_{12}$ & $(1,1)$ & Yes & $69$
		\strt \\ \cline{2-5}
		& $\mathsf{SL}_{8}(\C)/\mathsf{SU}_{8}$ & $1$ & Yes & $63$
		\strt \\ \cline{2-5}
		& $\mathsf{SL}_{3}(\C)/\mathsf{SU}_{3}\times \mathsf{SL}_6(\C)/\mathsf{SU}_6$ & $(1,1)$ & No & $43$
		\strt \\ \cline{2-5}
		& $\mathsf{SL}_{2}(\C)/\mathsf{SU}_{2}\times \mathsf{F}_4(\C)/\mathsf{F}_4$ & $(3,1)$ & No & $55$
		\strt \\ \cline{2-5}
		& $\mathsf{E}^{-5}_7/\mathsf{SO}_{12}\mathsf{Sp}_1$ & $1$ & Yes & $64$
		\strt \\ \cline{2-5}
		& $\mathsf{E}^7_7/\mathsf{SU}_8$ & $1$ & Yes & $70$
		\strt \\ \cline{2-5}
		& $\mathsf{E}^{-25}_7/\mathsf{E}_6\mathsf{U}_1$ & $1$ & Yes & $54$ \strt
	\end{tabular}

\end{table}

\begin{table}[p]
	\caption[]{Maximal totally geodesic submanifolds of symmetric spaces of $\mathsf{E}_8$-type.}
	\label{table:e8}
	\footnotesize
	\begin{tabular}{lllp{11ex}l}
		$M$ & $\Sigma$ & Dynkin index & Reflective? & $\dim \Sigma$
		\strt \\ \hline
		\multirow{2}{10.67ex}{$\mathsf{E}^{8}_8/\mathsf{SO}_{16}$}
		& $(\mathsf{SU}_{1,4}/\mathsf{S(U_1\times U_4)})^2$ & $(1,1)$ & No & $16$
		\strt \\ \cline{2-5}
		& $(\mathsf{SU}_{2,3}/\mathsf{S(U_2\times U_3)})^2$ & $(1,1)$ & No & $24$
		\strt \\ \cline{2-5}
		& $\mathsf{SL}_5(\R)/\mathsf{SO}_5\times\mathsf{SL}_5(\R)/\mathsf{SO}_5$ & $(1,1)$ & No & $28$
		\strt \\ \cline{2-5}
		& $\mathsf{SO}^0_{8,8}/\mathsf{SO}_8\times\mathsf{SO}_8$ & $1$ & Yes & $64$
		\strt \\ \cline{2-5}
		& $\mathsf{SO}^*_{16}/\mathsf{U}_8$ & $1$ & Yes & $56$
		\strt \\ \cline{2-5}
		& $\mathsf{SU}_{1,8}/\mathsf{S(U_1\times U_8)}$ & $1$ & No & $16$
		\strt \\ \cline{2-5}
		& $\mathsf{SU}_{4,5}/\mathsf{S(U_4\times U_5)}$ & $1$ & No & $40$
		\strt \\ \cline{2-5}
		& $\mathsf{SL}_{9}(\R)/\mathsf{SO}_9$ & $1$ & No & $44$
		\strt \\ \cline{2-5}
		& $\mathsf{SL}_{3}(\R)/\mathsf{SO}_3\times \mathsf{E}^6_6/\mathsf{Sp}_4$ & $(1,1)$ & No & $47$
		\strt \\ \cline{2-5}
		& $\mathsf{SU}_{1,2}/\mathsf{S(U_1\times U_2)}\times\mathsf{E}^2_{6}/\mathsf{SU}_6\mathsf{Sp}_1$ & $(1,1)$ & No & $44$
		\strt \\ \cline{2-5}
		& $\mathsf{E}^{-5}_{7}/\mathsf{SO}_{12}\mathsf{Sp}_1$ & $1$ & Yes & $64$
		\strt \\ \cline{2-5}
		& $\mathsf{SL}_{2}(\R)/\mathsf{SO}_2\times\mathsf{E}^7_7/\mathsf{SU}_8$ & $(1,1)$ & Yes & $72$
		\strt \\ \cline{2-5}
		& $\mathsf{SO}^0_{2,3}/\mathsf{SO}_2\times\mathsf{SO}_3$ & $12$ & No & $6$
		\strt \\ \cline{2-5}
		&
		$\mathsf{SO}^0_{1,4}/\mathsf{SO}_4$ & $12$ & No & $4$
		\strt \\ \cline{2-5}
		&
		 $\mathsf{SL}_{2}(\R)/\mathsf{SO}_2$ & $520, 760, 1240$ & No & $2$
		\strt \\ \cline{2-5}
		& $\mathsf{SL}_2(\R)/\mathsf{SO}_2\times\mathsf{SU}_{1,2}/\mathsf{S(U_1\times U_2)}$ & $(16,6)$ & No & $6$
		\strt \\ \cline{2-5}
		&
	$\mathsf{SL}_2(\R)/\mathsf{SO}_2\times\mathsf{SL}_{3}(\R)/\mathsf{SO}_3$ & $(16,6)$ & No & $7$
		\strt \\ \cline{2-5}
		&
		 $\mathsf{F}^4_4/\mathsf{Sp}_3\mathsf{Sp}_1\times\mathsf{G}^2_2/\mathsf{SO}_4$ & $(1,1)$ & No & $36$
		\strt \\ \cline{2-5}
		& $\mathsf{G}_2(\C)/\mathsf{G}_2\times\mathsf{SL}_2(\R)/\mathsf{SO}_2$ & $(1,1,8)$ & No & $16$  \strt \\
		\hline
			\multirow{2}{15ex}{$\mathsf{E}^{-24}_8/\mathsf{E}_{7}\mathsf{Sp}_1$}
		& $\mathsf{SU}_{1,4}/\mathsf{S(U_1\times U_4)}\times\mathsf{SU}_{2,3}/\mathsf{S(U_2\times U_3)}$ & $(1,1)$ & No & $20$
			\strt \\ \cline{2-5}
		& $\mathsf{SO}^0_{4,12}/\mathsf{SO}_4\times\mathsf{SO}_{12}$ & $1$ & Yes & $48$
		\strt \\ \cline{2-5}
		& $\mathsf{SO}^*_{16}/\mathsf{U}_8$ & $1$ & Yes & $56$
		\strt \\ \cline{2-5}
		& $\mathsf{SU}_{3,6}/\mathsf{S(U_3\times U_6)}$ & $1$ & No & $36$
		\strt \\ \cline{2-5}
		& $\mathsf{SU}_{2,7}/\mathsf{S(U_2\times U_7)}$ & $1$ & No & $28$
		\strt \\ \cline{2-5}
		& $\mathsf{SU}_{1,2}/\mathsf{S(U_1\times U_2)}\times \mathsf{E}^{-14}_6/\mathsf{Spin}_{10}\mathsf{U}_1$ & $(1,1)$ & No & $36$
		\strt \\ \cline{2-5}
		& $\mathsf{SL}_{3}(\R)/\mathsf{SO}_3\times\mathsf{E}^{-26}_{6}/\mathsf{F_4}$ & $(1,1)$ & No & $31$
		\strt \\ \cline{2-5}
		& $\mathsf{E}^{-5}_{7}/\mathsf{SO}_{12}\mathsf{Sp}_1$ & $1$ & Yes & $64$
		\strt \\ \cline{2-5}
		& $\mathsf{E}^{-25}_{7}/\mathsf{E}_6\mathsf{U}_1\times\mathsf{SL}_{2}(\R)/\mathsf{SO}_2$ & $(1,1)$ & Yes & $56$
		\strt \\ \cline{2-5}
		& $\mathsf{F}^{-20}_4/\mathsf{Spin}_{9}\times\mathsf{G}^2_2/\mathsf{SO}_4$ & $(1,1)$ & No & $24$
		\strt \\ \hline
		\multirow{5}{12ex}{$\mathsf{E}_8(\C)/\mathsf{E}_8$}
		& $\mathsf{SL}_{2}(\C)/\mathsf{SU}_{2}$ & $520,760,1240$ & No  & $3$
		\strt \\ \cline{2-5}
		& $\mathsf{SO}_{5}(\C)/\mathsf{SO}_{5}$ & $12$ & No  & $10$
		\strt \\ \cline{2-5}
		& $\mathsf{SL}_{2}(\C)/\mathsf{SU}_{2}\times \mathsf{SL}_{3}(\C)/\mathsf{SU}_{3}$ & $(16,6)$ & No & $11$
		\strt \\ \cline{2-5}
		& $\mathsf{F}_{4}(\C)/\mathsf{F}_{4}\times \mathsf{G}_2(\C)/\mathsf{G}_2$ & $(1,1)$ & No & $66$
		\strt \\ \cline{2-5}
		& $\mathsf{SO}_{16}(\C)/\mathsf{SO}_{16}$ & $1$ & Yes & $120$
		\strt \\ \cline{2-5}
		& $\mathsf{SL}_{9}(\C)/\mathsf{SU}_{9}$ & $1$ & No & $80$
		\strt \\ \cline{2-5}
		& $\mathsf{SL}_{5}(\C)/\mathsf{SU}_{5}\times \mathsf{SL}_{5}(\C)/\mathsf{SU}_{5}$ & $(1,1)$ & No & $48$
		\strt \\ \cline{2-5}
		& $\mathsf{SL}_{3}(\C)/\mathsf{SU}_{3}\times\mathsf{E}_6(\C)/\mathsf{E}_6$ & $(1,1)$ & No & $86$
		\strt \\ \cline{2-5}
		& $\mathsf{SL}_{2}(\C)/\mathsf{SU}_{2}\times \mathsf{E}_7(\C)/\mathsf{E}_7$ & $(1,1)$ & Yes & $136$
		\strt \\ \cline{2-5}
		& $\mathsf{E}^8_{8}/\mathsf{SO}_{16}$ & $1$ & Yes & $128$
		\strt \\ \cline{2-5}
		& $\mathsf{E}^8_8/\mathsf{E}_7\mathsf{Sp}_1$ & $1$ & Yes & $112$ \strt
	\end{tabular}
\end{table}


\begin{thebibliography}{99}
\bibitem{Alekseevsky}
D.~V.~Alekseevsky, A.~J.~ Di Scala, Minimal homogeneous submanifolds of symmetric spaces, Lie groups and symmetric spaces. \emph{Amer.\ Math.\ Soc.\ Transl.\ Ser.\ 2} \textbf{210} (2003), 11--25.
\bibitem{BO1}
J.~Berndt, C.~Olmos, Maximal totally geodesic submanifolds and index of symmetric spaces. \emph{J.\ Differential Geom.} \textbf{104} (2) (2016), 187--217.
\bibitem{BO2}
J.~Berndt, C.~Olmos, The index of compact simple Lie groups. \emph{Bull.\ Lond.\ Math.\ Soc.} \textbf{49} (2017), 903--907.
\bibitem{BO3}
J.~Berndt, C.~Olmos, On the index of symmetric spaces. \emph{J.\ Reine Angew.\ Math.} \textbf{737} (2018), 33--48.
\bibitem{BO4}
J.~Berndt, C.~Olmos, J.~S.~Rodríguez, The index of exceptional symmetric spaces. \emph{Rev.\ Mat.\ Iberoam.} \textbf{37} (4) (2020) 1599--1627.
\bibitem{BO5}
J.~Berndt, C.~Olmos, The index conjecture for symmetric spaces. \emph{J.\ Reine Angew.\ Math.} \textbf{2021} (772) (2021), 187--222.
\bibitem{boreldesiebenthal}
A.~Borel, J.~de Siebenthal,   Les sous-groupes fermés de rang maximum des groupes de Lie clos, \textit{Comment. Math. Helv.}, \textbf{23}, (1949), 200--221.
\bibitem{Cartan}
\'{E}.~Cartan, \emph{Le\c{c}ons sur la g\'{e}om\'{e}trie des espaces de Riemann.} 2d, Gauthier-Villars, Paris (1951).
\bibitem{chen1}
B.~Chen, T.~Nagano, Totally geodesic submanifolds of symmetric spaces, I. \emph{Duke Math.\  J.}~\textbf{44} (4) (1977), 745--755.
\bibitem{chen2}
B.~Chen, T.~Nagano, Totally geodesic submanifolds of symmetric spaces, II. \emph{Duke Math.\ J.}~\textbf{45} (2) (1978), 405--425.
\bibitem{Degraaf}
W.~A.~de Graaf, A.~Marrani,  Real forms of embeddings of maximal reductive subalgebras of the complex simple Lie algebras of rank up to 8. \emph{J.\ Phys.\ A} \textbf{53} (15) (2020) 13 pp.
\bibitem{discala-olmos}
A.~J.~Di Scala, C.~Olmos,  A geometric proof of the Karpelevich-Mostow theorem. \emph{Bull.\ Lond.\ Math.\ Soc.} \textbf{41} (4) (2009), 634--638. Corrigendum, 2011: \texttt{http://arxiv.org/abs/1104.0892}.
\bibitem{Dynkin2}
E.~B.~Dynkin, The maximal subgroups of the classical groups (Russian). \emph{Tr.\ Mosk.\ Mat.\ Obs}~{\bf 1} (1952), 39--166; English translation: \emph{Amer.\ Math.\ Soc.\ Transl.\ Ser.}~2, \textbf{6} (1957), 245--378.
\bibitem{Dynkin}
E.~B.~Dynkin, Semisimple subalgebras of semisimple Lie algebras (Russian). \emph{Mat.\ Sb.} \textbf{72} (2) (1952), 349--462; English translation: \emph{Amer.\ Math.\ Soc.\ Transl.\ Ser.}~2, \textbf{6} (1957), 111--244.
\bibitem{helgason}
S.~Helgason, \emph{Differential geometry, Lie groups and symmetric spaces}. Academic Press (1978)
\bibitem{Ikawa}
O.~Ikawa, H.~Tasaki, Totally geodesic submanifolds of maximal rank in symmetric spaces. \emph{Jpn.\ J.\ Math.}. New series \textbf{26} (1) (2000), 1--29.
\bibitem{jaffe1} H.~A.~Jaffee, Real forms of hermitian symmetric spaces. \emph{Bull. Amer. Math. Soc.\ }\textbf{81} (1975) 456--458.
\bibitem{jaffe2} H.~A.~Jaffee, Anti-holomorphic automorphisms of the exceptional symmetric domains.  \emph{J.\ Differential Geom.\ }\textbf{13} (1978), no.\ 1, 79--86.
\bibitem{karpelevich} F.~I.~Karpelevich, {Surfaces of transitivity of semisimple group of motions of a symmetric space,} \emph{Dokl.\ Akad.\ Nauk} \textbf{93} (1953), 401--404.
\bibitem{kleindga}
S.~Klein, Totally geodesic submanifolds of the complex quadric. \emph{Differential Geom.\ Appl.}~\textbf{26} (1) (2008), 79--96.
\bibitem{kleintams}
S.~Klein, Totally geodesic submanifolds of the complex and the quaternionic 2-Grassmannians. \emph{Trans.\ Amer.\ Math.\ Soc.} \textbf{361} (9) (2009), 4927--4967.
\bibitem{kleinosaka}
S.~Klein, Totally geodesic submanifolds of the exceptional Riemannian symmetric spaces of rank 2. \emph{Osaka J.\ Math.} \textbf{47} (4) (2010), 1077--1157.
\bibitem{Duality}
A.~Kollross, Duality of symmetric spaces and polar actions. \emph{J.\ Lie Theory}~\textbf{21} (4) (2011), 961-–986
\bibitem{Komrakov}
B.~P.~Komrakov, Maximal Subalgebras of Real Lie Algebras and a Problem of Sophus Lie  (Russian), \emph{Dokl.\ Akad.\ Nauk} \textbf{311} (1990), 528--532. English transl.: \emph{Dokl.\ Math.} \textbf{41} (1990), 269--273.
\bibitem{Leung}
D.~S.~P.~Leung, On the classification of reflective submanifolds of Riemannian symmetric spaces. \emph{Indiana Univ.\ Math.\ J.} \textbf{24} (4) (1974), 327--39; errata: \emph{Indiana Univ.\ Math.\ J.} \textbf{24} (12) (1975), 1199.
\bibitem{LeungIII}
D.~S.~P.~Leung, Reflective submanifolds. III. Congruency of isometric reflective submanifolds and corrigenda to the classification of reflective submanifolds. \emph{J.\ Differential Geom.}~\textbf{14} (2) (1979), 167--177.
\bibitem{LeungIV}
D.~S.~P.~Leung, Reflective submanifolds. IV. Classification of real forms of Hermitian symmetric spaces. \emph{J.\ Differential Geom.} \textbf{14} (2) (1979), 179--185.
\bibitem{Lu}
D.~Lu, Homogeneous foliations of spheres. \emph{Trans.\ Amer.\ Math.\ Soc.} \textbf{340} (1) (1993), 95-102.
\bibitem{Mashimo}
K.~Mashimo, Totally geodesic surfaces in symmetric spaces of classical type. \emph{Osaka J.\ Math.} \textbf{56} (2019), 1--32.
\bibitem{MontSam}
D.~Montgomery, H.~Samelson, Transformation groups of spheres. \emph{Ann.\ of Math.\ (2)} \textbf{44} (3)(1943), 454--470.
\bibitem{mostow} G.~Mostow, Some new decomposition theorems for semi-simple groups, \emph{Mem.\ Amer.\ Math.\ Soc.}, \textbf{14} (1955), 31--54.
\bibitem{Onishchikindex} A.~L.~Onishchik, Totally geodesic submanifolds of symmetric spaces (Russian), \textit{Geom.\ Methods Probl.\ Algebr.\ Anal.} \textbf{44} (1980), 64--85.
\bibitem{Onisbook2}
A.~L.~Onishchik, \emph{Topology of transitive transformation groups}. Johann Ambrosius Barth Verlag GmbH, Leipzig (1994).
\bibitem{Onisbook}
A.~L.~Onishchik, \emph{Lectures on real semisimple Lie algebras and their representations}. ESI Lectures in Mathematics and Physics. European Mathematical Society (EMS), Zürich (2004).
\bibitem{Onishvinberg0}
A.~L.~Onishchik, E.~B.~Vinberg, \emph{Lie groups and algebraic groups.} Springer Series in Soviet Mathematics. Springer-Verlag, Berlin, 1990.
\bibitem{Onishvinberg}
A.~L.~Onishchik, E.~B.~Vinberg (Eds.), \emph{Lie groups and Lie algebras III.} Structure of Lie groups and
Lie algebras, Encyclopaedia of Mathematical Sciences, \textbf{41}, Springer-Verlag, Berlin, 1994.
\bibitem{wolf}
J.~A.~Wolf, \emph{Spaces of constant curvature}. Sixth edition. AMS Chelsea Publishing, Providence, RI, (2011).
\bibitem{wolfrankone}
J.~A.~Wolf, Elliptic spaces in Grassmann manifolds. \emph{Illinois J.\ Math.} \textbf{7} (1963), 447--462.
\end{thebibliography}
\end{document}